\documentclass[a4,12pt]{amsart}       

\usepackage[utf8]{inputenc}
\usepackage[T1]{fontenc}
\usepackage{yfonts}
\usepackage[english]{babel}
  \usepackage[normalem]{ulem}
\usepackage{lipsum}
\usepackage{amsmath}
\usepackage{amsthm}
\usepackage{amssymb}

\usepackage[shortlabels]{enumitem}
\usepackage{graphicx}
\usepackage{mathtools}
\usepackage{hyperref}
\usepackage{amsfonts}
\usepackage{latexsym}
\usepackage{amscd}

\usepackage[all]{xy}
\usepackage[dvipsnames]{xcolor}
\usepackage{stmaryrd}

\usepackage{tikz}
\usetikzlibrary{arrows.meta,positioning}

\DeclareMathOperator{\Ker}{Ker}
\DeclareMathOperator{\End}{End}
\DeclareMathOperator{\Hom}{Hom}

\DeclareMathOperator{\Path}{Path}

\DeclareMathOperator{\Sig}{\Sigma}

\def\Path{\text{Path}}
\def\Mod{\text{-Mod}}

\def\dualita#1#2{\mathrel{
                 \mathop{\vcenter{
                 \offinterlineskip
                 \hbox to 1.2truecm{$\mapsto$}
                 \hbox to 1.2truecm{$\mapsfrom$}}}%
                 }}
\usepackage{aliascnt}
\newtheorem{theorem}{Theorem}[section]
\newaliascnt{lemma}{theorem}
\newtheorem{lemma}[lemma]{Lemma}
\aliascntresetthe{lemma}

\newaliascnt{proposition}{theorem}
\newtheorem{proposition}[proposition]{Proposition}
\aliascntresetthe{proposition}

\newaliascnt{corollary}{theorem}
\newtheorem{corollary}[corollary]{Corollary}
\aliascntresetthe{corollary}

\newaliascnt{definition}{theorem}
\newtheorem{definition}[definition]{Definition}
\aliascntresetthe{definition}

\newaliascnt{remark}{theorem}
\newtheorem{remark}[remark]{Remark}
\aliascntresetthe{remark}
\newaliascnt{example}{theorem}
\newtheorem{example}[example]{Example}
\aliascntresetthe{example}

\usepackage{cleveref}
\crefname{theorem}{theorem}{theorems}
\Crefname{theorem}{Theorem}{Theorems}

\crefname{lemma}{lemma}{lemmas}
\Crefname{lemma}{Lemma}{Lemmas}

\crefname{proposition}{proposition}{propositions}
\Crefname{proposition}{Proposition}{Propositions}

\crefname{corollary}{corollary}{corollaries}
\Crefname{corollary}{Corollary}{Corollaries}

\crefname{definition}{definition}{definitions}
\Crefname{definition}{Definition}{Definitions}

\crefname{remark}{remark}{remarks}
\Crefname{remark}{Remark}{Remarks}

\crefname{example}{example}{examples}
\Crefname{example}{Example}{Examples}

\begin{document}
\title[injective envelopes of simples from exclusive cycles]{The injective envelope of simple modules over Leavitt path algebras II: \\   simples arising from exclusive cycles}
\author{Gene Abrams}
\address{Department of Mathematics, University of Colorado, Colorado
Springs, CO 80918 U.S.A., Orcid https://orcid.org/0000-0001-9046-9811}
\email{abrams@math.uccs.edu}
\author{Francesca Mantese}
\address{Dipartimento di Informatica, Universit\`{a} degli Studi di Verona, I-37134 Verona, Italy, Orcid https://orcid.org/0000-0002-2126-9426}
\email{francesca.mantese@univr.it}
\author{Alberto Tonolo}
\address{Dipartimento di Matematica ``Tullio Levi-Civita'', Universit\`{a} degli Studi di Padova, I-35121, Padova, Italy, Orcid https://orcid.org/0000-0002-9844-3998}
\email{alberto.tonolo@unipd.it}
\subjclass{16S88, 16D50}

\begin{abstract}
Let $K$ be any field, $E$ any directed graph, and $L_K(E)$ the associated Leavitt path algebra.    As described first by Chen,  and subsequently generalized by Ara and Rangaswamy, for each cycle $c$ in $E$ one can build the simple left $L_K(E)$-module $V_c^E$, and then more generally  $V_{p(x),c}^E$ (where $p(x)$ is an irreducible polynomial in $K[x,x^{-1}]$).  A cycle $c$ is called {\it exclusive} in case none of the vertices of $c$ is the base of any cycle other than $c$.  In our main result we provide an explicit description of the injective envelope of $V_c^E$, and then more generally of  $V_{p(x),c}^E$,  for each exclusive cycle $c$.   Our method involves defining an $L_K(E)$-module structure on an appropriately-built $K$-vector space of infinite series.  Our main result significantly generalizes previous work of the authors, in that the result holds for all graphs (finite or not), and all exclusive cycles.   
\end{abstract}
\maketitle
\section{Introduction}



The rings known as \emph{Leavitt path algebras}  were introduced two decades ago in \cite{AA05} and \cite{AMP07}.     Over the intervening twenty years, various aspects of  these rings (e.g., their multiplicative and ideal structure; their module-theoretic structure; $\mathbb{Z}$-graded structural aspects; connections between them and other topics including $C^*$-algebras and symbolic dynamics; etc.) have been widely and deeply studied.    
In the current article the three authors continue their investigation into one important aspect of the module-theoretic structure of Leavitt path algebras, to wit, the structure of their injective modules.  The current work  generalizes, corrects,  and provides perspective for  various results appearing in the literature.    

A brief history and overview is in order here.  Let $E$ denote a finite graph,  and $c$ a cycle in  $E$.     Associated to  $c$ is a simple $L_K(E)$-module  $V^E_{c}$, called the {\it Chen} simple module for $c$.  
In \cite{AMT19} the three authors construct a module $M_c$ over $L_K(E)$ (for each cycle $c$ of $E$)  that mimics in many ways the well-known Pr\"ufer abelian groups $\mathbb{Z}_{p^\infty}$, where $p$ is a prime in $\mathbb{Z}$.  Specifically, the  Pr\"ufer $L_K(E)$-module $M_c$ associated to $c$ is uniserial,  artinian, non-noetherian, and has each composition factor isomorphic to $V^E_{c}$.    Whereas the Pr\"ufer abelian groups $\mathbb{Z}_{p^\infty}$ are divisible for all primes $p$, and hence injective over $ \mathbb{Z}$ (thus making   $\mathbb{Z}_{p^\infty}$ the injective envelope of $\mathbb{Z}_{p}$ over $\mathbb{Z}$), the Pr\"ufer $L_K(E)$-module $M_c$ is injective if and only if $c$ is a maximal cycle in $E$ (i.e., there are no cycles $d\neq c$ in $E$ for which there is a path connecting $d$ to $c$).   Consequently, when $c$ is maximal, the module $M_c$ is shown to be the injective envelope of $V^E_{c}$;  see \cite[Theorem 6.4]{AMT19}.
  
 The authors then  proceeded in \cite{AMT21} to identify and describe the injective envelope of the Chen simple $L_K(\mathcal{T})$-modules $V_{p(x),c}^\mathcal{T}$, as well as the injective envelope of the simple left $L_K(\mathcal{T})$-ideal  $L_K(\mathcal{T})w$, where $E = \mathcal{T} := \xymatrix{\bullet \ar@(ul,dl)_c
\ar[r]& \bullet^w}$ and $p(x)$ is an irreducible polynomial in $K[x,x^{-1}]$.   Since $c$ is maximal in $\mathcal T$, the aforementioned \cite[Theorem 6.4]{AMT19} provides the description of the injective envelope of  $V_{p(x),c}^\mathcal{T}$  for $p(x) = x-1$;  the generalization to all  irreducible $p(x)$ follows similarly.  On the other hand, identifying and describing the injective envelope of the simple left ideal $L_K(\mathcal{T})w$ was  a nontrivial task.  In the end, the description of this injective envelope involves the somewhat delicate notion of a ``formal  series in paths ending at $w$'', see \cite[Corollary 29]{AMT21}.  (N.b.: in \cite[Theorem 4.5]{Paper1}, the authors have recently utilized this formal  series idea to obtain an explicit description of the injective envelope of   every simple left ideal of every Leavitt path algebra.)  
  
By a result of Ara and Rangaswamy \cite[Theorem 1.1]{AR14}, the modules $V_{p(x),c}^\mathcal{T}$ ($p(x)$ irreducible in $K[x, x^{-1}]$) and $L_K(\mathcal{T})w$ are in fact up to isomorphism \emph{all} of the simple $L_K(\mathcal{T})$-modules.  Accordingly,   the three authors subsequently endeavored to describe the injective envelopes of all the simple modules over $L_K(E)$, where $E$ is any finite graph included in the hypotheses of   \cite[Theorem 1.1]{AR14}, a so-called \emph{graph with disjoint cycles}.  (In particular, each cycle $c$ in such a graph is necessarily an {\it exclusive} cycle, i.e.,  every vertex in $c$ is the base of only one cycle, $c$ itself.)   We presented our findings in \cite[Theorem 5.2]{AMT24}.   Our method was to try to apply the formal  series idea of \cite{AMT21} (utilized there in the context of the simple left ideal $L_K(\mathcal{T})w$)  to the Pr\"ufer $L_K(E)$-modules $M_c$ arising from cycles, and then use an induction argument on the length of chains of cycles in $E$. 

\smallskip

{\bf An Important Note Regarding an Error in \cite{AMT24}.}  The induction argument  given in \cite{AMT24}  contains an incorrect assertion, which we describe here.  The $K$-vector space $\widehat U_{E,p(c)}$ introduced in \cite[Definition 4.2]{AMT24} is a left $L_K(E)$-module; this module is in general  strictly contained in the injective envelope of the simple $V^E_{p(x),c}$ (contrary to the assertion made in \cite{AMT24}).   More specifically:   in the proof of \cite[Theorem 5.2]{AMT24}, when we wanted to prove that any homomorphism $\chi:I\to\widehat U_{E,c-1}$ extends to a homomorphism $\hat\chi:L_K(E)\to\widehat U_{E,c-1}$ we constructed (see \cite[page 25]{AMT24}) an intermediate homomorphism $\overline\chi:L_K(E)t\to \widehat U_{E,c-1}$ (where $t$ is an appropriate vertex in $E$)  whose image we incorrectly claimed is contained in $\widehat U_{E,c-1}$.  (This image is, however, contained in the corrected version of the injective envelope of $V^E_c$ we construct in the current article).  {\it Thus the statement of \cite[Theorem 5.2]{AMT24} is incorrect as given.} The statement of \cite[Theorem 5.2]{AMT24} becomes correct upon substituting $\widehat U_{E,c-1}$ by the injective envelope of $V^E_c$ as described in the main result of the current article.

\smallskip

However, in the current article, we not only correct our previous invalid description of the injective envelope of $V^E_{p(x),c}$, but also greatly generalize both the results and the framework in which we operate, by using  a completely new approach.
With the previous paragraphs as context, we now describe the  contributions we make in the current work. The ``formal  series''  technique we develop and use not only permits us to achieve  all the now-corrected results of \cite{AMT24}, but also to remove the previously-imposed finiteness restriction on $E$ (to wit, our results here hold for all graphs), and to establish the results for a wider class of cycles (the exclusive cycles) than those discussed in \cite{AMT24}. 

The article is organized as follows.  
 In Section~\ref{section:associativering} we discuss some useful, general ring-theoretic results that are valid for  all rings with local units, as well as present the details undergirding the formal  series idea.  
 In Section~\ref{section:LPA}  we
  present some  results about Leavitt path algebras which are central to the ideas contained in  the sequel.   Importantly,   for a hereditary subset $H$ of $E^0$  we  establish a Morita equivalence between the Leavitt path algebras of the restriction graph $E_H$ and the hedgehog graph ${}_HE$ (Theorem \ref{thm:restricted_hedgehog_Morita}).

In Section \ref{section:injenvcycle}   we then utilize these ideas  to describe the injective envelope of each Chen simple module $V^E_{p(x), c}$, where $c$ is an  exclusive cycle  and  $p(x)$ is irreducible in $K[x,x^{-1}]$.     We achieve the result for 
 $V^E_{x-1,c}$ in Theorem~\ref{injhulltheoremc}.  The central role is played by the description of a left $L_K(E)$-module structure on the appropriate collection of formal  series, see Theorem \ref{prop:formalisTmod}.   (This module structure involves the construction of a Cuntz-Krieger $E$-family acting as endomorphisms on these formal series.)   We then observe how to parlay the ideas used in Theorem~\ref{injhulltheoremc}  to establish the result for all germane $p(x)$.   We again emphasize  that these results posit no restriction on the graph $E$.  
 
 We conclude the article with an Appendix, in which we present some of the computations related to the Cuntz-Krieger $E$-family.

  We believe that the results presented in the current article represent a significant advance in the study of the representations of a Leavitt path algebra. That being said, we readily admit that there is still significant work to be done in this regard.  In particular, we have not yet been able to describe the injective envelope of a Chen simple module of the form $V^E_c$ where $c$ is a non-exclusive cycle.    Specifically, we have not yet been able to describe the injective envelope of the $L_K(R_2)$-module $V^{R_2}_{c}$, where 
  \[ R_2: =\quad\xymatrix{\bullet\ar@(dl,ul)\ar@(ur,dr)}\]  and $c$ is one of the two (non-exclusive) loops in $R_2$.


\section{Some general ring-theoretic results \\ for rings with local units}\label{section:associativering}
Let $S$ be an associative ring (we do not assume that $S$ has a multiplicative identity). 

The ring $S$ is said to have a \emph{set of local units $A$} in case $A$ is a set of idempotents in $S$ having the property that every finite subset of $S$ is contained in a subring of the form $aSa$ for some $a\in A.$ 
The ring $S$ is said to have \emph{enough idempotents} in case there exists a set of nonzero orthogonal idempotents $H$ in $S$ for which the set $\Sig(H)$ of finite sums of distinct elements of $H$ is a set of local units for $S$. 

For a ring $S$ with local units, an abelian group $M$ is a left $S$-{\it module} if there is a (standard) left module action of $S$ on $M$, but with the added proviso that $M$ is $\emph{unitary}$, i.e. that $SM=M$. (This is the appropriate generalization of the requirement that $1_S\cdot m=m$ for all $m$ in a left module $M$ over a unital ring $S$.) In particular, any left ideal of $S$ is a left $S$-submodule of the regular module ${}_SS$. If the condition $SM=M$ is missing, we say that $M$ is a left $S$-{\it premodule}.

For a ring $S$ with local units, the category $S$-Mod of (unitary) left $S$-modules and standard module homomorphisms retains many of the properties of a category of modules over unital rings; importantly for us, that $S$-Mod has enough injectives, and that $S$-Mod admits direct limits.   

The proofs of the following three results can be found in \cite{Paper1}.  

\begin{proposition}\label{prop:Baire}(\cite[Proposition 1.2]{AMT24}). Let $M$ be a left $S$-module and $I$ be a fixed left ideal of $S$. Assume that  any homomorphism $f : X \to M$ from any left
ideal $X\leq I$ extends to a homomorphism $\hat f:I\to M$, and also assume that any homomorphism $g:Y\to M$ from any left ideal $Y\geq I$ extends to a homomorphism $g:S\to M$. Then M is injective.
\end{proposition}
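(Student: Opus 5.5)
We will show that $M$ is injective by verifying Baer's criterion in $S$-Mod. For a ring with local units the appropriate form of the criterion is: $M$ is injective provided every homomorphism $g:Y\to M$ from a left ideal $Y$ of $S$ extends to a homomorphism $S\to M$. This holds because $S=\bigoplus$-type generators are unavailable in general, but $S$ (being unitary over itself) is a generator of $S$-Mod: every unitary module is a quotient of a direct sum of copies of $S$, and more precisely of direct sums of the cyclic projectives $Sa$, $a\in A$. The standard Zorn's-lemma proof of Baer's criterion then goes through verbatim, since for $m$ in a unitary module we have $m=am$ for some $a\in A$, so $Sm$ is a quotient of the left ideal $Sa$. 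We take this form of Baer's criterion as the target. It remains to show that an arbitrary $g:Y\to M$ extends to $S$.

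\textbf{Step 1 (reduce to $Y\cap I$ and $Y+I$).} Let $g:Y\to M$ be given, with $Y$ an arbitrary left ideal. Set $X=Y\cap I\le I$, and apply the first hypothesis to $f=g|_X$. This gives $\hat f:I\to M$ extending $g|_{Y\cap I}$.

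\textbf{Step 2 (glue).} Define $h:Y+I\to M$ by $h(y+i)=g(y)+\hat f(i)$. This is well defined: if $y+i=y'+i'$, then $y-y'=i'-i\in Y\cap I$, so $g(y-y')=\hat f(i'-i)$. Hence $h$ is an $S$-homomorphism extending $g$, defined on the left ideal $Y+I\ge I$.

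\textbf{Step 3 (extend to $S$).} The second hypothesis extends $h$ to $\hat h:S\to M$, and $\hat h$ extends $g$. Baer's criterion therefore gives that $M$ is injective.

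The only nontrivial point is the validity of Baer's criterion for rings with local units with unitary modules. I would check it as follows. Given a submodule $N\le N'$ and $\varphi:N\to M$, take via Zorn's lemma a maximal extension $(N_0,\varphi_0)$. If $N_0\ne N'$, pick $m\in N'\setminus N_0$ and $a\in A$ with $am=m$. Consider the left ideal $Y=\{s\in Sa: sm\in N_0\}$ and the map $s\mapsto\varphi_0(sm)$. Extend this map to $\psi:S\to M$ and define the extension on $N_0+Sm$ by $n+sm\mapsto\varphi_0(n)+\psi(sa)$. This is well defined because $sm=(sa)m$, and $sa\in Y$ whenever $sm\in N_0$. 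Since $m=am\in Sm$, the module $N_0+Sm$ strictly contains $N_0$, contradicting maximality.
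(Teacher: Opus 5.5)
Your proof is correct and follows the standard argument for this result, which the paper does not reprove but defers to \cite{Paper1} (see also \cite{AMT24}): restrict to $Y\cap I$, extend to $I$, glue to a well-defined map on $Y+I$, extend to $S$, and conclude by Baer's criterion. Your Zorn's-lemma check that Baer's criterion holds for unitary modules over a ring with local units is also sound; the key point is choosing $a$ with $am=m$, so that $m\in Sm$. The confused sentence about ``$\bigoplus$-type generators'' in your first paragraph adds nothing to the argument and should be dropped.
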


\begin{lemma}\label{lemma:restriction}
Let $M$ be a left $S$-module, $I\leq J$ fixed proper left ideals of $S$. Assume that the restriction map $\Hom_S(J,M)\to\Hom_S(I,M)$ is a monomorphism. If any homomorphism $I\to M$ extends to a homomorphism $S\to M$, then any homomorphism $J\to M$ also extends to a homomorphism $S\to M$.
\end{lemma}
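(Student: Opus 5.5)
Fix a homomorphism $g\colon J\to M$. The idea is to use the hypothesis on $I$ to produce a homomorphism $\hat f\colon S\to M$ out of $g$, and then to use injectivity of the restriction map to conclude that $\hat f$ itself restricts to $g$ on $J$.

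First, restrict $g$ to $I$ to obtain $f:=g|_I\in\Hom_S(I,M)$. By hypothesis, $f$ extends to some $\hat f\colon S\to M$, so $\hat f|_I=f$.

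Next, consider $h:=\hat f|_J\in\Hom_S(J,M)$. This is legitimate because $J$ is a left ideal of $S$, so restricting an $S$-homomorphism $S\to M$ to $J$ yields an $S$-homomorphism. Both $h$ and $g$ lie in $\Hom_S(J,M)$, and their images under the restriction map $\Hom_S(J,M)\to\Hom_S(I,M)$ agree:
\[
h|_I=(\hat f|_J)|_I=\hat f|_I=f=g|_I .
\]

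Since the restriction map is a group homomorphism and is assumed to be a monomorphism, $h|_I=g|_I$ forces $h=g$. Equivalently, $(h-g)|_I=0$ gives $h-g=0$. Hence $\hat f|_J=g$, so $\hat f$ is the desired extension of $g$ to $S$.

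The argument has no genuine obstacle. The only point to check is that restricting $\hat f$ to $J$ lands in $\Hom_S(J,M)$, which holds because $J$ is a left ideal of $S$. The unitary condition on $M$ and the properness of $I$ and $J$ play no role in the argument.
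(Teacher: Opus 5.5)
Your proof is correct: you restrict $g$ to $I$, extend that restriction to $\hat f\colon S\to M$, and then use injectivity of the restriction map to get $\hat f|_J=g$. This paper does not prove the lemma itself (it refers to earlier work of the authors), but this restrict--extend--compare argument is the standard proof, and it is complete as you wrote it.
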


\begin{lemma}\label{lemma:injectivelocalunits}
Suppose $I$ is a two-sided ideal of the ring with local units $S$. Suppose $I$, viewed as a ring,  contains a set of local units. Then any injective left $S/I$-module is an injective left $S$-module.  

Consequently, in this situation, if $E(M)$ is the injective envelope of the simple left $S/I$-module $M$ in $S/I$-Mod, then $E(M)$ is the injective envelope of the simple left $S$-module $M$ in $S$-Mod.  
\end{lemma}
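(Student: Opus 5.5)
The plan is to verify Baer's criterion for $M$ over $S$ by pulling everything back to $S/I$. I will treat any injective left $S/I$-module $M$ as an $S$-module through the projection $\pi\colon S\to S/I$. First, $M$ is unitary over $S$: $(S/I)M=M$ and $\pi(S)=S/I$ give $SM=M$. Second, $IM=0$ by construction. The key tool is a Baer-type criterion for rings with local units. A unitary module $N$ over $S$ is injective provided every homomorphism $f\colon X\to N$ from a left ideal $X$ of $S$ extends to a homomorphism $S\to N$.

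Fix a left ideal $X\le S$ and a homomorphism $f\colon X\to M$. The main step is to show that $f$ vanishes on $X\cap I$, and here the local units of $I$ are used. Take $x\in X\cap I$. Since $I$ has local units, there is an idempotent $e\in I$ with $x=xe=ex$, so $x=ex$ and $f(x)=f(ex)=e f(x)\in IM=0$.

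It follows that $f$ factors through $X/(X\cap I)\cong (X+I)/I$. This gives an $S/I$-homomorphism $\bar f\colon (X+I)/I\to M$, defined by $\bar f(x+I)=f(x)$. It is well defined by the previous step, and it is $S/I$-linear because the $S/I$-action on $M$ is induced from $S$. Note that $(X+I)/I$ is a left ideal of $S/I$, and $S/I$ again has local units (the images of those of $S$). So injectivity of $M$ over $S/I$ gives an extension $\hat g\colon S/I\to M$, and then $\hat g\circ\pi\colon S\to M$ extends $f$. By the Baer criterion, $M$ is injective in $S$-Mod.

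For the consequence, let $E=E(M)$ be the injective envelope in $S/I$-Mod. By the first part, $E$ is injective as an $S$-module. Submodules are the same for $S$ and $S/I$ on modules annihilated by $I$, so $M$ is a simple $S$-module. It remains to check that $M\le E$ is essential as $S$-modules, which holds because the $S$-submodules of $E$ are exactly its $S/I$-submodules. Hence $E$ is the injective envelope of $M$ in $S$-Mod.

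The main obstacle is justifying Baer's criterion in the non-unital setting with local units, in particular that unitary $S$-modules are preserved and that extending to $S$ suffices. I would cite it as standard; it is the same criterion implicitly underlying Proposition~\ref{prop:Baire}. If needed, I would prove it by the usual Zorn's lemma argument applied to unitary modules.
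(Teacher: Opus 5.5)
Your proof is correct. The local units of $I$ are needed at exactly one point, and you use them there: for $x\in X\cap I$ you take an idempotent $e\in I$ with $x=ex$, so $f(x)=ef(x)\in IM=0$. That lets $f$ descend to the left ideal $(X+I)/I$ of $S/I$.

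The other steps are also sound:
\begin{itemize}
\item $M$ is unitary over $S$.
\item $S/I$ has local units.
\item For modules annihilated by $I$, the $S$-submodules and the $S/I$-submodules coincide. This gives simplicity and essentiality in the second statement.
\end{itemize}
The Baer criterion you invoke holds for unitary modules over a ring with local units by the usual Zorn argument. Left ideals are unitary, and $b\in Sb$ for every $b$ in a unitary module, so the maximal partial extension can always be enlarged.

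The paper does not prove this lemma in the text; it defers to \cite{Paper1}, so there is no in-paper argument to compare with, and yours is the natural one. If you prefer to avoid Baer's criterion, the same local-unit trick gives $A\cap IB=IA$ for any submodule $A\le B$ of $S$-modules. Hence $A/IA\to B/IB$ is injective, and injectivity of $M$ over $S$ follows directly from $\Hom_S(N,M)\cong\Hom_{S/I}(N/IN,M)$ for every $S$-module $N$.
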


Throughout the article function composition will be written right to left, so that $fg$ means ``first $g$, then $f$''. 


\begin{definition}    \cite[Definition 2.5]{A83}   
Let $S$ be a ring with a set of local units, and let $I$ be an upward directed set.   Suppose that for each $i\in I$ there exists a left $S$-module $X_i$, and for each pair $i\leq j$ in $I$ there exist $S$-homomorphisms
$$\varphi_{ij}:  X_i \to X_j \ \ \mbox{and} \ \ \psi_{ji} : X_j \to X_i.$$
We call the collection $\{ X_i, \varphi_{ij}, \psi_{ji} \ | \ i\in I\}$  {\bf compatible in $S\Mod$} in case

(S.1)  For each $i\in I$, $\varphi_{ii} = \psi_{ii} = id_{X_i}$. 

(S.2)   For each $i\leq j \leq k$ in $I$,    $\varphi_{jk} \varphi_{ij}  =  \varphi_{ik}$  and $ \psi_{ji}\psi_{kj} = \psi_{ki}$.

(S.3)  For each $i\leq j$ in $I$,  $\psi_{ji}\varphi_{ij}  = id_{X_i}$. 

(S.4)   For each $k\geq i,j$ in $I$,   $\varphi_{jk} \psi_{kj} \varphi_{ik} \psi_{ki} = \varphi_{ik}  \psi_{ki}  \varphi_{jk} \psi_{kj}  $. 
\end{definition}

\begin{definition}
Let $H$ be a set of orthogonal idempotents in the ring $S$. The set 
$$\Sig(H)$$ 
consisting of finite sums of distinct elements of $H$ has a standard partial order given by
\[w\leq w'\text{ if } ww'=w'w=w\quad\forall w,w'\in \Sig(H).\]
\end{definition}
\noindent
The set $H$ is a set of enough idempotents for the subring 
\[\Sig(H)S\Sig(H)=\{s \in S \ | \ s = wtw' \ \mbox{for some} \ w,w' \in \Sig(H), t\in S\}\]
of $S$. 

\begin{proposition}\label{prop:newMorita}
Let $S$ be a ring and $H$ a set of orthogonal idempotents in $S$. Then $\Sig(H) S \Sig(H) \subseteq S \Sig(H) S$.   If $S\Sig(H)S$ has local units, then $S\Sig(H)S$ and $\Sig(H)S\Sig(H)$ are Morita equivalent.   
\end{proposition}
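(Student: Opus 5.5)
The inclusion $\Sig(H)S\Sig(H)\subseteq S\Sig(H)S$ is immediate. Every element of $\Sig(H)S\Sig(H)$ has the form $wtw'$ with $w,w'\in\Sig(H)$ and $t\in S$. Write $w=\sum_{h\in F}h$ with $F\subseteq H$ finite. Since $h$ is idempotent, $h=h\cdot h$, so
\[
wtw'=\sum_{h\in F} h\,(h\,t\,w').
\]
Each summand $h(htw')$ lies in $S\,\Sig(H)\,S$ (here $S$ is nonunital, so I use $h=hh$ to absorb the left factor). Hence $wtw'\in S\Sig(H)S$.

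For the Morita equivalence, set $R:=S\Sig(H)S$, which has local units by hypothesis, and $T:=\Sig(H)S\Sig(H)$, which has enough idempotents $H$. The cleanest route is to exhibit $T$ as a corner-type ring attached to a generating family of projectives in $R\Mod$. For each $h\in H$ we have $h\in R$ (as $h=hhh$), and $Rh$ is a projective unitary left $R$-module. The plan is to prove that $\bigoplus_{h\in H}Rh$ is a generator of $R\Mod$, and that the functor
\[
M\mapsto \Sig(H)M=\bigoplus_{h\in H}hM
\]
is an equivalence $R\Mod\to T\Mod$, with inverse $N\mapsto R\Sig(H)\otimes_T N$. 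Here $P=R\Sig(H)$ (that is, $\bigcup_{w} Rw$) and $Q=\Sig(H)R$ serve as bimodules ${}_RP_T$ and ${}_TQ_R$.

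The key steps, in order, are as follows.

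\textbf{Step (a).} Show $P=S\Sig(H)$, i.e.\ $Rw=Sw$ up to the local units of $R$. Likewise $Q=\Sig(H)S$ and $QP=T$ (using the inclusion just proved). Also show $PQ=S\Sig(H)S=R$. The last equality uses that $R$ has local units: for $x\in R$ pick a local unit $e\in R$ with $x=exe$, and write $e$ as a sum of terms $sws'$.

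\textbf{Step (b).} Check that $P$ and $Q$ are unitary on the appropriate sides, which follows from the idempotents.

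\textbf{Step (c).} Verify that the multiplication maps
\[
P\otimes_T Q\to R,\qquad Q\otimes_R P\to T
\]
are bimodule isomorphisms. They are surjective by Step (a). For injectivity I would use the standard Morita-context argument for rings with local units: a Morita context $(R,T,P,Q)$ with both pairings surjective yields an equivalence (Ánh–Márki). Injectivity then follows automatically from surjectivity plus local units. If I need to prove it directly, the argument is this: given $\sum p_i\otimes q_i\mapsto 0$, choose a local unit $e\in R$ fixing all $p_i$, write $e=\sum p'_jq'_j$, and compute
\[
\sum_i p_i\otimes q_i=\sum_{i,j} p'_j q'_j p_i\otimes q_i=\sum_j p'_j\otimes\Big(q'_j\sum_i p_iq_i\Big)=0,
\]
using the balancing of the tensor product.

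I expect the main obstacle to be Step (c), specifically the nonunital bookkeeping. It is needed to ensure that $e$ is actually expressible as $\sum p'_jq'_j$ with $p'_j\in P$, $q'_j\in Q$, and that local units of $T$ (the elements of $\Sig(H)$) act as identities on the relevant elements. I would first try to write $e=\sum s_k w_k s'_k$, then set $w=\sum_k w_k\in\Sig(H)$ after absorbing overlaps via orthogonality, and take $p'=s_kw$, $q'=ws'_k$. This works because $w$ is an idempotent dominating each $w_k$. Once the context is a surjective Morita context between rings with local units, the equivalence of $R\Mod$ and $T\Mod$ follows from the Ánh–Márki theorem.
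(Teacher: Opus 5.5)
Your proof is correct, and it reaches the Morita equivalence by a different criterion than the paper, although the module at its core is the same.

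The paper proceeds as follows:
\begin{itemize}
\item It sets $X_w=S\Sig(H)Sw=Rw$ and checks that inclusions and projections make $\{X_w\}$ a compatible system in the sense of \cite[Definition 2.5]{A83}.
\item It proves that $\varinjlim_w X_w$ is a generator, by mapping a direct sum of copies of it onto $R$.
\item It identifies the corner rings $wRw$ with $\End_R(Rw)$, compatibly with the transition maps.
\item It then applies \cite[Theorem 4.2]{A83}, using $\Sig(H)S\Sig(H)=\bigcup_w wRw$.
\end{itemize}
Note that $\varinjlim_w X_w=\bigcup_w Rw=R\Sig(H)$ is exactly your $P$.

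You instead pair $P$ with $Q=\Sig(H)R$ and invoke the \'Anh--M\'arki criterion for surjective Morita contexts between rings with local units. This replaces the tedious verification of (S.1)--(S.4) and the endomorphism-ring identification by the two identities $PQ=R$ and $QP=T$, plus unitarity. These identities in fact hold without local units; for instance $sws'=(sww)\,w\,(wws')$ with $sww,\,wws'\in R$. So the local-unit hypothesis on $R$ enters only to make the \'Anh--M\'arki theorem applicable. What the paper's route buys is the equivalence in the form $G=\varinjlim_\rho I\rho\otimes_{\rho I\rho}-$, which is used concretely in Proposition~\ref{isoUEcGUEc}. Your functor $N\mapsto P\otimes_T N$ is naturally isomorphic to it, so nothing is lost.

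One detail in your last paragraph is wrong, though it is easily repaired. Suppose $e=\sum_k s_kw_ks'_k$ and you replace every $w_k$ by a common $w\geq w_k$. Then $\sum_k (s_kw)(ws'_k)=\sum_k s_kws'_k$, which differs from $e$ by $\sum_k s_k(w-w_k)s'_k$ and need not equal $e$. No common idempotent is needed: take $p'_k=s_kw_k$ and $q'_k=w_ks'_k$. These already lie in $P=S\Sig(H)$ and $Q=\Sig(H)S$ by your Step~(a), so the step you flagged as the main obstacle is actually immediate. Likewise, for the first inclusion you need not split $w$ into elements of $H$, since $wtw'=w\cdot w\cdot(tw')$.
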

\begin{proof}
The first containment follows immediately as $(\Sig(H))^2 = \Sig(H)$.  We note that easily $(S \Sig(H) S)^2 =S \Sig(H) S$; in particular, this gives that  for each $w\in \Sig(H)$,   $X_w := S\Sig(H)Sw$ is a   left $S\Sig(H)S$-module.  We impose the ordering from $\Sig(H)$ to the set $\{X_w \ | \ w\in \Sig(H)\}$.      For $w,w',w''$ having $w'' = w+  w'$ in $\Sig(H)$ with $w \perp w'$   we have both  $X_w \leq X_{w''}$ and $X_{w'} \leq X_{w''}$.     Let $\varphi_{w,w''} : X_w \to X_{w''} $ denote the standard injection and let $\psi_{w'',w} : X_{w''} \to X_w$ denote the standard projection along $X_{w'}$.   

 A tedious but obvious computation yields that the collection \[\{X_w, \varphi_{w,w'}, \psi_{w', w} \ | \ w\in \Sig(H)\}\] is compatible in $S\Sig(H)S$-Mod.
 
Now consider the left $S\Sig(H)S$-module $P := \underrightarrow{{\rm lim}}_{w\in \Sig(H)}(X_w, \varphi_{w,w'})$.   Define ${}_{S\Sig(H)S}Q:= \bigoplus_{s\in S\Sig(H)S} P$;  so $Q$ is the external direct sum of copies of $P$, indexed by the set $S\Sig(H)S$.   Define
$$\xi:  Q \to S\Sig(H)S \ \ \ \mbox{via} \ \ \  (p_s) \mapsto \sum_{s\in S\Sig(H)S} p_s s .$$
Easily $\xi$ is a left $S\Sig(H)S$-module homomorphism.   Let $s\in S\Sig(H)S$; write $s = \sum_{i=1}^n x_i w_i y_i$ with $x_i, y_i \in S, w_i \in \Sig(H)$  ($1\leq i \leq n$, some $n\in \mathbb{N}$).   Let $q\in Q$ be the element with $x_i w_i$ in the $y_i$-component (for $1\leq i \leq n$), and $0$ elsewhere.  Then $(q)\xi = \sum_{s\in S} q_s\cdot s = \sum_{i=1}^n  x_iw_i \cdot y_i = s$. Thus, $\xi$ is surjective and so $P$ is a generator for $S\Sig(H)S$-Mod.   
    By standard ring theory, each $w(S\Sig(H)S)w$ is isomorphic to ${\rm End}_{S\Sig(H)S}(S\Sig(H)Sw)$.  Moreover, the diagram  
\[   
\xymatrix{wS\Sig(H)Sw \ar[r] \ar[d]^{inc} & {\rm End}_{S\Sig(H)S}(S\Sig(H)Sw)\ar[d]^{f \mapsto \psi_{w',w}f\varphi_{w,w'}}\\
w'S\Sig(H)Sw' \ar[r]& {\rm End}_{S\Sig(H)S}(S\Sig(H)Sw') }
 \]
of ring homomorphisms commutes. 
Now clearly $$\Sig(H)S\Sig(H) = \bigcup_{w\in \Sig(H)} w S\Sig(H)S w.$$
Thus the desired Morita equivalence between the module categories $S\Sig(H)S\Mod$ and $\Sig(H)S\Sig(H)\Mod$   follows directly from  \cite[Theorem 4.2]{A83}.
\end{proof}

\begin{remark}
 In the previous proof we assume that $S\Sig(H)S$ has local units in order that we may apply \cite[Theorem 4.2]{A83}. 
The assumption that  $S\Sig(H)S$ has local units is not superfluous,  as demonstrated by the following example. Let $S$ be the ring consisting of those $\mathbb{N} \times \mathbb{N}$  matrices, having  coefficients in a field $K$, that differ from an infinite scalar matrix only in finitely many columns. Then $S$ is a ring, indeed a ring with unit. The matrix $E_{11}$ with 1 in position $(1,1)$ and 0 elsewhere is an idempotent in $S$; and $\Sigma(E_{11}) = \{E_{11}\}$.  
The two-sided ideal $T = SE_{11}S = S \Sigma(E_{11}) S$ of $S$  consists of those matrices having nonzero entries in only finitely many columns; $T$ is easily shown to be  a ring that does not contain a set of local units.
\end{remark}

\begin{corollary}\label{cor:newMorita}
Denote by $I:=I(H)$ the two-sided ideal generated by a set of orthogonal idempotents $H$ in $S$. Assume that $I$ has local units. Then, as rings, $I$ and $\Sig(H)I\Sig(H)$ are Morita equivalent.
\end{corollary}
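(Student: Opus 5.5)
The plan is to deduce the corollary from Proposition~\ref{prop:newMorita}. The natural move is to apply that proposition with the ring $I$ playing the role of $S$, since $I$ is assumed to have local units. First I check that $H \subseteq I$, which holds because each $h\in H$ lies in the two-sided ideal it generates. So $H$ is a set of orthogonal idempotents in the ring $I$, and Proposition~\ref{prop:newMorita} applies to the pair $(I,H)$ provided $I\Sig(H)I$ has local units. Its conclusion is then that $I\Sig(H)I$ and $\Sig(H)I\Sig(H)$ are Morita equivalent.

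The key step is to show $I\Sig(H)I = I$; this is where the local-units hypothesis on $I$ enters. The inclusion $I\Sig(H)I\subseteq I$ holds because $I$ is an ideal containing $\Sig(H)$. For the reverse inclusion, take $x\in I$ and write $x=\sum_i s_i h_i t_i$, where $h_i\in H$ and $s_i,t_i$ lie in the unitization of $S$, i.e.\ they may be integers or elements of $S$. Choose a local unit $a\in I$ with $ax=xa=x$ and $ah_i=h_ia=h_i$ for all $i$; this is possible since $\{x,h_1,\dots,h_n\}\subseteq I$. Using $h_i = h_i h_i h_i$, we get
\[x = axa=\sum_i (a s_i h_i)\, h_i\, (h_i t_i a).\]
Here $as_ih_i\in I$ and $h_it_ia\in I$ because $I$ is an ideal. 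Hence $x\in I\Sig(H)I$.

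With $I\Sig(H)I=I$ established, the local-units hypothesis of the proposition is exactly the hypothesis that $I$ has local units. Proposition~\ref{prop:newMorita} then gives that $I$ and $\Sig(H)I\Sig(H)$ are Morita equivalent.

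The only delicate point I expect is how elements of $I$ are written when $S$ is nonunital: a general element of $I$ may involve terms $h$, $sh$, $ht$, $sht$ rather than only $sht$. Inserting the local unit $a$ on both sides, as above, absorbs all of these cases uniformly. One could also note that $\Sig(H)I\Sig(H)=\Sig(H)S\Sig(H)$, using $\Sig(H)\subseteq I$, but this is not needed for the statement.
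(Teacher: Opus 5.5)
Your proof is correct and follows the paper's route: establish $I=I\Sig(H)I$ (the paper additionally records $\Sig(H)I\Sig(H)=\Sig(H)S\Sig(H)$) and then invoke Proposition~\ref{prop:newMorita}. One small correction to your commentary: the local unit $a$ is not needed for $I=I\Sig(H)I$, since $s_ih_it_i=(s_ih_i)\,h_i\,(h_it_i)$ already works with $s_ih_i,\,h_it_i\in I$ (this is the paper's $(S\Sig(H))\Sig(H)(\Sig(H)S)$ step). So the local-units hypothesis enters only as the hypothesis of the proposition.
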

\begin{proof}We have
\[I=S\Sig(H) S=(S\Sig(H))\Sig(H)(\Sig(H) S)\subseteq I\Sig(H) I\subseteq I\cdot I\subseteq I\quad \text{and}\]
\[
\Sig(H) I\Sig(H)\subseteq \Sig(H) S\Sig(H) =\Sig(H)(\Sig(H) S)\Sig(H)\subseteq \Sig(H) I\Sig(H) \]
and hence all the above displayed containments are indeed equalities.
Therefore the result follows from  \Cref{prop:newMorita}.
\end{proof}


We conclude this section by making some observations about the notion of a vector space of formal series.  

Let $K$ be a field. For each set $Y$, we denote by $K^Y$ the $K$-vector space of all the functions $Y\to K$. Any element $\mathfrak p$ in $K^Y$ can be represented by a \emph{formal series in $Y$}, i.e., an infinite sum
\[\sum_{y\in Y}k_yy.\]
The latter can be identified with the function $Y\to K$ that maps $\hat y$ to $k_{\hat y}$ for each $\hat y\in Y$:
\[
\left(\sum_{y\in Y}k_yy\right)(\hat y)=k_{\hat y}\quad \forall \hat y\in Y.
\]
The set of formal series in $Y$ is denoted by $K[[Y]]$.
The role of variables $y\in Y$ is that of a ``placeholder" to identify the images of the elements of $Y$. The vector space structure of $K^Y$ is reflected in the usual operations of sum and scalar product  for the elements of $K[[Y]]$.
Having fixed any set $X$ and any injective map $f:Y\to X$, 
we can denote also by
\[
\sum_{y\in Y}k_yf(y)
\]
the function $Y\to K$ that maps $\hat y$ to $k_{\hat y}$ for each $\hat y\in Y$, where we use, instead of $y$, the expression $f(y)$ as placeholder.  More formally,
\[
\left(\sum_{y\in Y}k_yf(y)\right)(\hat y)=k_{\hat y}\quad \forall \hat y\in Y.
\]
This seemingly-overly-cumbersome notation will serve us well in the sequel.

\section{Some general results about \\ Leavitt path algebras  over arbitrary graphs}\label{section:LPA}

We assume the reader is familiar with the notion of a {\it Leavitt path algebra};  see \cite{AAM}  for additional details and background information.  We start this section by setting some notation.  

We let $E=(E^0, E^1, s,r)$ denote an arbitrary directed graph, $K$ any field,  and $L_K(E)$  the associated Leavitt path $K$-algebra.  $L_K(E)$ is the quotient of the path $K$-algebra $K\widehat E$ on the \emph{extended graph} $\widehat E$ by the ideal generated by the Cuntz-Krieger relations (see \cite[Definition 1.2.3]{AAM}). The algebra $L_K(E)$ always has enough idempotents: the vertices in $E^0$ are nonzero orthogonal idempotents, and the set of finite sums of distinct elements of $E^0$ constitutes a set of local units for $L_K(E)$. $L_K(E)$ has multiplicative identity if and only if the set $E^0$ of vertices is finite, in which case $1_{L_K(E)} = \sum_{v\in E^0}v.$

We denote by $\Path(E)$ the set of all paths in $E$  (i.e., the finite sequences of edges $e_1\cdots e_m$ such that $r(e_i)=s(e_{i+1})$, $1\leq i<m$), together with the vertex set $E^0$.    A {\it cycle} is a path $e_1 e_2\cdots e_m$ in $E$ for which $r(e_m)=s(e_1)$, and the vertices $s(e_1), s(e_2), \dots , s(e_{m})$ are distinct.  
We define a preorder $\geq$ on $E^0$ given by:
\[v\geq w\ \ \text{ in case there is }\mu\in\text{Path}(E) \ \mbox{for which} \  s(\mu)=v, r(\mu)=w.\]
If $H_0\subseteq E^0$ then \emph{the tree $T(H_0)$} is the set
\[T(H_0):=\{w\in E^0\mid \exists v\in H_0\text{ such that }v\geq w\}.\]
A vertex $v$ in $E^0$ is 
\begin{itemize}
    \item \emph{regular} if the set $s^{-1}(v)$ of edges with source $v$ has a finite cardinality greater than or equal to 1. 
\item  an \emph{infinite emitter} if $s^{-1}(v)$ is infinite. 
\item a \emph{sink} if $s^{-1}(v)=\emptyset$.
\end{itemize}
We denote the set of regular vertices (resp., infinite emitters) of $E$  by ${\rm Reg}(E)$ (resp., ${\rm Inf}(E)$).  
A set of vertices $H\subseteq E^0$ is 
\begin{itemize}
    \item \emph{hereditary} if whenever $v\in H$ and $w\in E^0$ for which $v\geq w$, then $w\in H$.
    \item \emph{saturated} if whenever a regular vertex $v$ has the property that $\{r(e)\mid e\in E^1, s(e)=v\}\subseteq H$, then $v\in H$.
\end{itemize}
It is easy to check that $T(H_0)$ is the hereditary closure of $H_0$.


To any nonempty hereditary subset of $E^0$  we can associate  two new graphs and their corresponding Leavitt path algebras.

\begin{definition}(\cite[Definitions~2.2.21, ~2.5.16]{AAM})\label{def:hedgehog}
Let $H$ be a nonempty hereditary subset of $E^0$. We denote by $F_E(H)$ the set
\[
\begin{aligned}
F_E(H):=\{\alpha\in & \text{Path}(E)\mid \alpha=e_1\cdots e_n,\text{ with }s(e_1)\in E^0\setminus H,\\
&r(e_i)\in E^0\setminus H\ \forall 1\leq i<n, r(e_n)\in H\}
\end{aligned}
\]
\begin{itemize}
\item We denote by $E_H=(E_H^0,E_H^1, s_H, r_H)$ the \emph{restriction graph}:
\[ E_H^0=H,\quad E_H^1=\{e\in E^1\mid s(e)\in H\}, \quad (s_H,r_H)=(s_{\mid E_H^1},r_{\mid E_H^1}).\]
\item We denote by ${}_HE=({}_HE^0,{}_HE^1,s',r')$ the \emph{hedgehog graph}:
\[{}_HE^0=H\cup F_E(H)\quad\text{and}\quad {}_HE^1=\{e\in E^1\mid s(e)\in H\}\cup F'_E(H)\]
where $F'_E(H):=\{\alpha':\alpha\in F_E(H)\}$ denotes another copy of $F_E(H)$.
The source and range functions $s'$ and $r'$ are defined by setting $s'(e)=s(e)$ and $r'(e)=r(e)$ for every $e\in E^1$ such that $s(e)\in H$; and by setting $s'(\alpha')=\alpha$ and $r'(\alpha')=r(\alpha)$ for all $\alpha'\in F'_E(H)$.
\end{itemize}
\end{definition}

The notion of \emph{restricted graph} $E_H$ is self-explanatory, while the notion of \emph{hedgehog graph} ${}_HE$ deserves further discussion. Intuitively,  the vertex set ${}_HE^0$ can be viewed as $H$, together with a new vertex corresponding to each path in $E$ of length $\geq 1$ that ends at a vertex in $H$, but for which none of the previous edges in the path ends at a vertex in $H$.  The edge set ${}_HE^1$ can be viewed as $E_H^1$, together with a new edge going to $H$ for each new vertex that we added to $H$ in ${}_HE^0$. Consequently, in ${}_HE$ the only paths entering subgraph $E_H$ have common length 1. 
\begin{lemma}\label{lem:restrictedversushedgehog}  Let $H$ be a nonempty hereditary subset of $E^0$.
\begin{enumerate}
    \item  The Leavitt path algebra $L_K(E_H)$ associated with the restricted graph  $E_H$ coincides with the subring $\Sig(H)L_K(E)\Sig(H)$ of $L_K(E)$.  

\item   The two-sided ideal $I(H)$ of $L_K(E)$ generated by $H$,  viewed as a $K$-algebra, is isomorphic to $L_K({}_HE)$. In particular, $I(H)$ has local units. 
\end{enumerate}
\end{lemma}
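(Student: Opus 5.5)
For part (1), I will compare the two algebras through the universal property of $L_K(E_H)$. Because $H$ is hereditary, $E_H$ is a complete subgraph of $E$. Every edge of $E_H$ has its range in $H$, and for every $v\in H$ we have $s_H^{-1}(v)=s^{-1}(v)$. So $v$ is regular in $E_H$ exactly when it is regular in $E$, and it is an infinite emitter or a sink in $E_H$ exactly when it is so in $E$.

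\emph{Step 1: the inclusion map.} The elements $\{v, e, e^*\mid v\in H, e\in E_H^1\}$ of $L_K(E)$ satisfy the Cuntz--Krieger relations of $E_H$. In particular, the (CK2) relation at a regular $v\in H$ uses exactly the same edges in both graphs. This gives a $K$-algebra map $\iota:L_K(E_H)\to L_K(E)$, and its image clearly lies in $\Sig(H)L_K(E)\Sig(H)$.

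\emph{Step 2: injectivity.} The map $\iota$ is $\mathbb Z$-graded and sends each vertex to a nonzero element. I will conclude injectivity from the Graded Uniqueness Theorem (\cite{AAM}).

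\emph{Step 3: surjectivity.} $L_K(E)$ is spanned by monomials $\alpha\beta^*$ with $r(\alpha)=r(\beta)$. If $v\alpha\beta^*w\neq 0$ with $v,w\in H$, then $s(\alpha)=v\in H$ and $s(\beta)=w\in H$. By heredity, every edge of $\alpha$ and of $\beta$ has its source in $H$, so $\alpha,\beta\in\Path(E_H)$. Hence $\Sig(H)L_K(E)\Sig(H)$ is spanned by elements of $\iota(L_K(E_H))$, and $\iota$ is onto.

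For part (2), I will define a homomorphism $\phi:L_K({}_HE)\to I(H)$ on generators:
\begin{itemize}
\item $v\mapsto v$, $e\mapsto e$ and $e^*\mapsto e^*$ for $v\in H$ and $e\in E_H^1$;
\item $\alpha\mapsto \alpha\alpha^*$, $\alpha'\mapsto \alpha$ and $(\alpha')^*\mapsto \alpha^*$ for $\alpha\in F_E(H)$.
\end{itemize}
All images lie in $I(H)$, because $\alpha=\alpha r(\alpha)$ with $r(\alpha)\in H$.

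\emph{Step 4: checking the relations of ${}_HE$.} Distinct paths $\alpha,\beta\in F_E(H)$ give orthogonal idempotents $\alpha\alpha^*$ and $\beta\beta^*$. Indeed, $\alpha^*\beta\neq0$ would force one path to be a proper prefix of the other. That is impossible, since a path in $F_E(H)$ reaches $H$ only at its final edge. These idempotents are also orthogonal to each $v\in H$, because $\alpha$ starts outside $H$. Next, $\alpha^*\alpha=r(\alpha)$ and $\alpha\alpha^*\cdot\alpha=\alpha$ give (CK1) and the source/range relations. Each new vertex $\alpha$ emits exactly one edge $\alpha'$, so its (CK2) relation reads $\alpha\alpha^*=\alpha\alpha^*$, which holds. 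The relations at vertices of $H$ are the same as in part (1).

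\emph{Step 5: injectivity of $\phi$.} Again by the Graded Uniqueness Theorem: $\phi$ preserves degree because $\alpha'$ has degree $1$ while $\alpha$ has degree $|\alpha|$. To fix this, I will instead use the Cuntz--Krieger Uniqueness-free version, i.e. the Reduction Theorem, or regrade ${}_HE$ by giving $\alpha'$ degree $|\alpha|$ and the vertex $\alpha$ degree $0$. The graded uniqueness argument works for any grading by edge weights, so $\phi$ is injective since vertices map to nonzero elements.

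\emph{Step 6: surjectivity onto $I(H)$.} This is the main obstacle. I will use the standard description
\[
I(H)=\operatorname{span}\{\gamma\delta^*\mid r(\gamma)=r(\delta)\in H\}
\]
for hereditary $H$, taken from \cite{AAM}; if it is not available in that form, I will derive it by passing to the saturated closure and using the known description of graded ideals. Given such a $\gamma$, either $\gamma$ lies in $\Path(E_H)$, or $\gamma=\alpha\mu$ with $\alpha\in F_E(H)$ and $\mu\in\Path(E_H)$; the same holds for $\delta$. Then $\gamma\delta^*$ is the image of $\alpha'\mu\nu^*(\beta')^*$, or of a mixed version, so $\phi$ is onto.

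Finally, $L_K({}_HE)$ has local units, namely the finite sums of its vertices. Transporting them through $\phi$ gives local units for $I(H)$, which completes part (2).
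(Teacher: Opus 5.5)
Your proof is correct. The paper gives no argument of its own: it calls (1) easy and, for (2), cites \cite[Theorem~2.5.19]{AAM}. What you have written is a self-contained version of the standard argument, using the universal property, graded uniqueness, and the spanning set of $I(H)$.

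There are three small points.

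\begin{enumerate}
\item In Step 5 the sentence says $\phi$ ``preserves degree because $\alpha'$ has degree $1$ while $\alpha$ has degree $|\alpha|$''. It should say that $\phi$ does \emph{not} preserve the standard degree.

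\item Your fix of giving $\alpha'$ degree $|\alpha|$ is fine, but say why graded uniqueness survives the regrading. All edge weights are positive, so every cycle $c$ has nonzero degree. Hence a homogeneous nonzero $p(c)$ produced by the Reduction Theorem is a monomial $kc^m$, and multiplying by a power of $c$ or $c^*$ yields a nonzero multiple of a vertex in the kernel.

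\item The Reduction Theorem route you mention also works directly, without any regrading. The new vertices $\alpha\in F_E(H)$ are sources in ${}_HE$, so every cycle of ${}_HE$ lies in $E_H$. On $\Sig(H)L_K({}_HE)\Sig(H)\cong L_K(E_H)$, the map $\phi$ is just the injective inclusion from part (1). Hence $\phi$ kills neither a nonzero multiple of a vertex nor a nonzero $p(c)$, so $\ker\phi=0$.
\end{enumerate}

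In Step 6, the spanning set $\{\gamma\delta^*: r(\gamma)=r(\delta)\in H\}$ for hereditary $H$ is exactly \cite[Lemma~2.4.1]{AAM}, which the paper itself uses later. So the fallback through the saturated closure is unnecessary.
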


\begin{proof}
Statement $(1)$ is easy, while Statement $(2)$ appears as \cite[Theorem~2.5.19]{AAM}\label{lemma:Ilocunits}. 
\end{proof}

The following result provides a strong and useful connection between the Leavitt path algebras of the two graphs $E_H$ and ${}_HE$.
Specifically, combining Lemma \ref{lem:restrictedversushedgehog} with Corollary \ref{cor:newMorita}, we have established
\begin{theorem}\label{thm:restricted_hedgehog_Morita}
Let $E$ be any graph and let $H$ be any nonempty hereditary subset of $E^0$.  Then the Leavitt path algebras $L_K(E_H)$ and $L_K({}_HE)$ are Morita equivalent.
\end{theorem}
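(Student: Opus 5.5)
The plan is to take $S=L_K(E)$ and let the set of orthogonal idempotents be $H\subseteq E^0$ itself. The vertices of $E$ are nonzero pairwise orthogonal idempotents in $L_K(E)$, so $H$ is a legitimate choice. The argument then combines the two parts of Lemma~\ref{lem:restrictedversushedgehog} with Corollary~\ref{cor:newMorita}; the only point needing care is that the hypothesis of the corollary (local units for the generated ideal) is met.

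First, let $I=I(H)$ be the two-sided ideal of $L_K(E)$ generated by $H$. By Lemma~\ref{lem:restrictedversushedgehog}(2), $I\cong L_K({}_HE)$ as $K$-algebras. Since every Leavitt path algebra has enough idempotents (finite sums of distinct vertices of ${}_HE$ form local units), $I$ has a set of local units. This is exactly the hypothesis of Corollary~\ref{cor:newMorita}, which therefore gives that $I$ and $\Sig(H)I\Sig(H)$ are Morita equivalent.

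Second, we identify $\Sig(H)I\Sig(H)$. The proof of Corollary~\ref{cor:newMorita} shows $\Sig(H)I\Sig(H)=\Sig(H)L_K(E)\Sig(H)$. By Lemma~\ref{lem:restrictedversushedgehog}(1), this subring coincides with $L_K(E_H)$.

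Finally, Morita equivalence depends only on the module categories, so it is invariant under ring isomorphism. Transporting along $I\cong L_K({}_HE)$ and $\Sig(H)I\Sig(H)=L_K(E_H)$ gives that $L_K(E_H)$ and $L_K({}_HE)$ are Morita equivalent.

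The main potential obstacle is that the local-unit hypothesis in Proposition~\ref{prop:newMorita} is genuinely needed, as the preceding Remark's example shows. Here it is supplied by \cite[Theorem~2.5.19]{AAM} through Lemma~\ref{lem:restrictedversushedgehog}(2). A minor additional check is that the local units of $L_K({}_HE)$ really transfer to $I$ under the isomorphism, but this is immediate because ring isomorphisms carry idempotents to idempotents and preserve the defining property of a set of local units.
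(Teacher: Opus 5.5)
Your proposal is correct and matches the paper's own argument, which likewise just combines Lemma~\ref{lem:restrictedversushedgehog} with Corollary~\ref{cor:newMorita}. The ingredients are the same: part (2) gives $I(H)\cong L_K({}_HE)$ with local units, and part (1) together with the equality $\Sig(H)I\Sig(H)=\Sig(H)L_K(E)\Sig(H)$ from the corollary's proof identifies the corner ring with $L_K(E_H)$; your remarks about transporting local units and Morita equivalence along the isomorphism only make explicit what the paper leaves implicit.
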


It will be important for us to investigate properties of the set $F_E(H)$ introduced in Definition \ref{def:hedgehog} (which by definition does not include $H$ itself), expanded to include $H$; we formalize that idea  here.  
\begin{definition}\label{def:estesa}
Let $H_0$ be a nonempty subset of $E^0$ and $H=T(H_0)$ its hereditary closure. We denote by $F^+_E(H_0)$ the set 
\[F^+_E(H_0) \ := \ H_0 \ \sqcup \ \{\alpha\in F_E(H):r(\alpha)\in H_0\}.\] 
\end{definition}



\begin{lemma}\label{lemma:mu*lambda}
Let $H_0$ be a nonempty subset of $E^0$.  
Then for each $\mu, \lambda\in F^+_E(H_0)$ we have
\[\mu^*\lambda=\begin{cases}
    r(\mu)  & \text{if }\mu=\lambda, \\
      0& \text{otherwise}.
\end{cases}\]
\end{lemma}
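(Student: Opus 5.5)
The natural approach is to use the standard fact in Leavitt path algebras that for paths $\mu,\lambda$ one has $\mu^*\lambda = \lambda'$ if $\lambda=\mu\lambda'$, $\mu^*\lambda=(\mu')^*$ if $\mu=\lambda\mu'$, and $\mu^*\lambda=0$ otherwise. This follows from repeated use of the (CK1) relation $e^*f=\delta_{e,f}r(e)$. Granting this, the statement reduces to a combinatorial claim: for distinct $\mu,\lambda\in F^+_E(H_0)$, neither is an initial segment of the other. When $\mu=\lambda$, the fact gives $\mu^*\mu=r(\mu)$ directly; this also covers the vertex case, since $v^*v=v=r(v)$.

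For distinct elements I would split into cases according to whether $\mu$ and $\lambda$ are vertices in $H_0$ or paths in $F_E(H)$, where $H=T(H_0)$.

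\emph{Both vertices.} Then $\mu^*\lambda=\mu\lambda=0$ by orthogonality of distinct vertices.

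\emph{$\mu=v\in H_0$ and $\lambda\in F_E(H)$.} Here $v^*\lambda=v\lambda$, which is nonzero only if $s(\lambda)=v$. But $s(\lambda)\in E^0\setminus H$ while $v\in H_0\subseteq H$, so the product is $0$. The symmetric case, with $\mu\in F_E(H)$ and $\lambda=v$, gives $\mu^*v=(v\mu)^*=0$ in the same way.

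\emph{Both paths in $F_E(H)$, say $\lambda=\mu\lambda'$ with $\lambda'$ of length $\geq1$.} Write $\lambda=e_1\cdots e_n$ and $\mu=e_1\cdots e_m$ with $m<n$. Then $r(e_m)=r(\mu)\in H$ by membership of $\mu$ in $F_E(H)$. However, the definition of $F_E(H)$ applied to $\lambda$ requires $r(e_i)\notin H$ for all $i<n$, so $r(e_m)\notin H$. This contradiction shows the case cannot occur, and the case $\mu=\lambda\mu'$ is symmetric. Hence for distinct paths neither is a prefix of the other, and the fact gives $\mu^*\lambda=0$.

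I expect no real obstacle. The only step requiring any care is justifying the prefix fact for $\mu^*\lambda$, which is a routine induction on length using (CK1). The key observation is simply that paths in $F_E(H)$ enter $H$ only at their final edge, so they form a prefix-free family that is also disjoint in source from $H_0$.
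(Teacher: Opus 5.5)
Your proof is correct and follows essentially the same route as the paper. The paper also reduces to the prefix criterion (if $\mu^*\lambda\neq 0$ then one of $\mu,\lambda$ is an initial segment of the other), and then observes that an element of $F^+_E(H_0)$ meets $T(H_0)$ only at its final range, so a proper prefix relation is impossible; your explicit treatment of the cases involving vertices of $H_0$, which the paper's one-line argument passes over, is a welcome bit of extra care.
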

\begin{proof}
By \cite[Lemma~1.2.12]{AAM} $\mu^*\lambda\not=0$ implies $\mu=\lambda\kappa$ for some $\kappa\in \text{Path}(E)$ or $\lambda=\mu\sigma$ for some $\sigma\in \text{Path}(E)$. If $\mu=\lambda\kappa$ and $\kappa\not=r(\lambda)$, then the ranges of two edges in $\mu$ would be in $H_0\subseteq T(H_0)$, contrary to the property of the paths in $F^+_E(H_0)$ (see \Cref{def:hedgehog,def:estesa}). Analogously if $\lambda=\mu\sigma$ and $\sigma\not=r(\mu)$.
\end{proof}

The proof of the following result appears in \cite{Paper1}. 
\begin{proposition}
\label{lemma:Rmu*}
The two-sided ideal $I(H)$ of $L_K(E)$ generated by a nonempty hereditary subset $H$ of $E^0$ is equal, as a left $L_K(E)$-ideal, to
\[ \sum_{\mu\in F^+_E(H)}L_K(E)\mu^* \ = \ \bigoplus_{\mu\in F^+_E(H)}L_K(E)\mu^*,\]
which in turn is isomorphic as left $L_K(E)$-modules to the external direct sum
\[ \bigoplus_{\mu\in F^+_E(H)}L_K(E)r(\mu).\]
\end{proposition}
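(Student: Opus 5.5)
We have to show three things about $I(H)$: that it equals the sum $\sum_{\mu\in F^+_E(H)}L_K(E)\mu^*$, that this sum is direct, and that each summand $L_K(E)\mu^*$ is isomorphic to $L_K(E)r(\mu)$. Note that here $H$ is already hereditary, so $F^+_E(H)=H\sqcup F_E(H)$.

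\emph{The sum is contained in $I(H)$.} For $\mu\in F^+_E(H)$ we have $r(\mu)\in H$. Hence $\mu^*=r(\mu)\mu^*$ lies in $I(H)$.

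\emph{$I(H)$ is contained in the sum.} Since $I(H)$ is spanned by elements $x\,v\,y$ with $x,y\in L_K(E)$ and $v\in H$, it suffices to show that $v y$ lies in the sum for every monomial $y=\alpha\beta^*$ with $\alpha,\beta\in\Path(E)$.
\begin{itemize}
\item If $v\alpha\beta^*\neq 0$, then $s(\alpha)=v\in H$. Since $H$ is hereditary, every vertex on $\alpha$ and $\beta$ lies in $H$, including $r(\alpha)=r(\beta)$.
\item Read $\beta$ backwards from its range. Let $\beta=\beta_1\beta_2$, where $\beta_1$ is the shortest initial segment with $r(\beta_1)\in H$ (take $\beta_1=s(\beta)$ if $s(\beta)\in H$). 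By minimality, $\beta_1\in F^+_E(H)$.
\item Then $\alpha\beta^*=(\alpha\beta_2^*)\beta_1^*\in L_K(E)\beta_1^*$.
\end{itemize}
This gives the equality of sets. The only care needed is the decomposition of $\beta$, which uses heredity and the minimal-prefix definition of $F_E(H)$.

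\emph{Directness.} Suppose $\sum_{i=1}^n x_i\mu_i^*=0$ with distinct $\mu_i\in F^+_E(H)$. Right-multiply by $\mu_j$. By \Cref{lemma:mu*lambda}, applied with $H_0=H$ (allowed since $T(H)=H$), we get $\mu_i^*\mu_j=\delta_{ij}r(\mu_j)$. Hence
\[x_j\mu_j^*=x_j r(\mu_j)\mu_j^*=x_j\mu_j^*\mu_j\mu_j^*=0.\]

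\emph{The isomorphism $L_K(E)\mu^*\cong L_K(E)r(\mu)$.} Right multiplication by $\mu$ is a map $L_K(E)\mu^*\to L_K(E)r(\mu)$, and right multiplication by $\mu^*$ is a map in the other direction. Both are left module maps. They are mutually inverse because $\mu^*\mu=r(\mu)$ and $x\mu^*\mu\mu^*=x\mu^*$. Taking the direct sum of these isomorphisms gives the isomorphism with the external direct sum.

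I expect the main obstacle to be the containment of $I(H)$ in the sum, specifically the prefix argument; the remaining steps are short computations using \Cref{lemma:mu*lambda}.
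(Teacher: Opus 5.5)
Your proof is correct and is the natural argument. This paper defers the proof to \cite{Paper1}, but your ingredients are the ones it uses elsewhere: monomials $\alpha\beta^*$ in $I(H)$ with $r(\beta)\in H$, splitting off the prefix of $\beta$ that lies in $F^+_E(H)$, and \Cref{lemma:mu*lambda} for directness. One harmless slip: heredity puts only the vertices of $\alpha$ (in particular $r(\beta)=r(\alpha)$) in $H$, not the vertices of $\beta$. Indeed $\beta$ may start outside $H$, which is exactly why your prefix $\beta_1$ can be a genuine path in $F_E(H)$, so that clause should be dropped.
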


The remainder of this section is taken up in presenting the ideas and properties attendant to the simple $L_K(E)$-modules associated to cycles in $E$, and their corresponding Pr\"ufer modules.

Let $c=e_1\cdots e_n$ be a cycle in $E$. We denote by $v_i$ the source $s(e_i)$ of the edge $e_i$, $1\leq i\leq n$ and set $v:=v_1$ the source of the cycle.
Denote by 
\[p_{v_i,v_j}\]
the shortest real path along the cycle with source $v_i=s(e_i)$ and range $v_j=s(e_j)$. E.g., if $c=e_1e_2e_3$, we have
\[p_{v_2,v_1}=e_2e_3,\quad p_{v_2,v_2}=v_2,\quad p_{v_3,v_2}=e_3e_1.
\]

\begin{lemma}\label{computewithpsubwcycle}
If $w,w',w''\in c^0$, then in $L_K(E)$ we have
\begin{enumerate}
\item $p^*_{w,w'} p_{w,w''}=\begin{cases}
    p^*_{w'',w'} & \text{if }|p_{w,w'}|\geq |p_{w,w''}|, \\
       p_{w',w''} & \text{otherwise}.
\end{cases}$
\item $p_{w,w'}^* p_{v,w}^* = \begin{cases}
     p_{v,w'}^* & \text{if }|p_{v,w}|+|p_{w,w'}|<|c|, \\
     p_{v,w'}^* c^*& \text{otherwise.}
\end{cases}$
\end{enumerate}
\end{lemma}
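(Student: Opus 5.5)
We prove both identities directly from the Cuntz--Krieger relation $e^*f=\delta_{e,f}r(e)$ for edges $e,f$, together with the combinatorial structure of the cycle. Write $c=e_1\cdots e_n$ with $v_i=s(e_i)$. The vertices $v_1,\dots,v_n$ are distinct, so for $w\in c^0$ the path $p_{w,w'}$ is the unique initial segment of the rotated cycle starting at $w$ that ends the first time it reaches $w'$; its length is $<n=|c|$. The key observation is that for a fixed $w$, the paths $p_{w,w'}$ ($w'\in c^0$) are all initial segments of the same rotated cycle $c_w$ (the cycle based at $w$). Hence any two of them are comparable: the shorter is a prefix of the longer.

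For (1), suppose first $|p_{w,w'}|\geq|p_{w,w''}|$. Then $p_{w,w'}=p_{w,w''}\,q$, where $q$ is the segment of $c_w$ running from $w''$ to $w'$. Since $q$ has length $<n$ and runs along the cycle, it is the shortest such path, so $q=p_{w'',w'}$. Then
\[p_{w,w'}^*p_{w,w''}=q^*p_{w,w''}^*p_{w,w''}=q^*r(p_{w,w''})=q^*=p^*_{w'',w'}.\]
Here we use $\alpha^*\alpha=r(\alpha)$ for any real path $\alpha$, which follows by repeated use of $e^*e=r(e)$. In the other case, $p_{w,w''}=p_{w,w'}q'$ with $q'=p_{w',w''}$ by the same reasoning, and we get $p_{w,w'}^*p_{w,w''}=q'$. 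The degenerate case $w'=w''$ is covered by the first case, giving the vertex $p_{w',w'}=w'$.

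For (2), note $p_{w,w'}^*p_{v,w}^*=(p_{v,w}p_{w,w'})^*$, and $p_{v,w}p_{w,w'}$ is the real path along the cycle from $v$ through $w$ to $w'$, of length $|p_{v,w}|+|p_{w,w'}|\le 2n-2$.
\begin{itemize}
\item If this length is $<n$, the path is an initial segment of $c=c_v$ ending at $w'$ without wrapping around. By uniqueness of the first arrival at $w'$, it equals $p_{v,w'}$.
\item If the length is $\ge n$, the path wraps past $v$ exactly once: it begins with the full cycle $c$ and continues with an initial segment of $c$ of length $<n$ ending at $w'$. So $p_{v,w}p_{w,w'}=c\,p_{v,w'}$. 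Taking adjoints gives $p_{v,w'}^*c^*$.
\end{itemize}

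The only step needing care is justifying these path factorizations in the wrap-around cases. The argument is that walking along the cycle is deterministic, because the vertices of $c$ are distinct: a walk from $v$ of length $m$ lands at $v_{(m\bmod n)+1}$. The shortest path to a given vertex is therefore determined by length modulo $n$, and a path of length $\ge n$ starting at $v$ must begin with $c$. The identity $v=v_1$ in (2), with $c$ based at $v$, is exactly what makes the prefix in the wrapping case equal to $c$ itself rather than a rotation of it.
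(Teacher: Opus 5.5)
Your proposal is correct and follows essentially the same route as the paper: part (1) by cancelling with $e_i^*e_i=r(e_i)$ along the common prefix, and part (2) by writing $p_{w,w'}^*p_{v,w}^*=(p_{v,w}p_{w,w'})^*$ and checking whether the combined walk from $v$ wraps through $c$, i.e.\ whether $|p_{v,w}|+|p_{w,w'}|\ge |c|$. You also spell out the factorizations, using that walking along a cycle with distinct vertices is deterministic; the paper leaves this step as ``follows easily.''
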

\begin{proof}
(1) For any edge $e_i$ of $c$ we have $e_i^*e_i=r(e_i)$: the formula follows easily.

(2) The composition $p_{w,w'}^* p_{v,w}^*$ has source $w'$ and range $v$. It ends with a traverse of $c^*$ if and only if $|p_{v,w}|+|p_{w,w'}|\geq |c|$.
\end{proof}


Given an infinite path $q:=q_1q_2\cdots q_n\cdots$ in $E$ (see e.g. \cite[Definition 2.9.4]{AAM} for complete description) and an integer $n\geq 1$, we denote by
\[\tau_{\leq n}q:=q_1q_2\cdots q_n,\quad \tau_{> n}q:=q_{n+1}q_{n+2}\cdots
\]
the $n$-prefix and the $n$-tail of $q$.
To the cycle $c=e_1\cdots e_n$ in $E$, one can associate the 
$$ \mbox{Chen simple left } L_K(E) \mbox{-module} \  V^E_{c}.$$


As first described in \cite{Ch12}, 
as a $K$-vector space, $V^E_{c}$ has as a basis the infinite paths in $E$ \emph{tail equivalent} to $c^\infty$, i.e., the infinite paths $q$ for which there exist $m\in\mathbb N$ such that $\tau_{>m}q=c^\infty:=ccc\cdots$.
Then $V^E_{c}$ is made into a left $L_K(E)$-module by defining, for all infinite paths $q$ tail equivalent to $c^\infty$,  $u\in E^0$, $f\in E^1$,
\begin{itemize}
\item $u\cdot q=\begin{cases}
 q  & \text{if }u=s(q), \\
    0  & \text{otherwise}.
\end{cases}$
\item $f\cdot q=\begin{cases}
 fq   & \text{if }r(f)=s(q), \\
    0  & \text{otherwise}.
    \end{cases}$\item $f^*\cdot q=\begin{cases}
 \tau_{>1}q=q_2q_3\cdots   & \text{if }q=fq_2q_3\cdots, \\
    0  & \text{otherwise}.
    \end{cases}$
\end{itemize}
In particular, for each vertex $u\not=s(c)=:v$ we have  $u\cdot c^\infty=0$, and $(c-v)c^\infty=0$. The kernel of the map
$L_K(E)v\to V^E_{c}$, $r\mapsto rc^\infty$ is the left ideal $L_K(E)(c-v)$. 

Following \cite{Ch12} and \cite{AR14}, it is possible to get in two steps  broader classes of simple $L_K(E)$-modules.

\underline{First step}. For each $0\neq a\in K$, denote by $\sigma_{c,a}$ the gauge automorphism of $L_K(E)$ associated with $c = e_1e_2\cdots e_n$ and $a$. Then $\sigma_{c,a}$ maps each vertex $u \in E^0$ to itself, each edge $f \in E^1 \setminus {e_1}$ to itself, and similarly each $f^* \in (E^1)^* \setminus {e_1^*}$ to itself. However, it sends $e_1$ to $a e_1$ and $e_1^*$ to $a^{-1} e_1^*$.
In the special case where $a = 1$, the automorphism $\sigma_{c,1}$ clearly reduces to the identity map on $L_K(E)$.

For $M \in L_K(E)\text{-Mod}$, we define the twisted module $M^{\sigma_{c,a}}$ by taking $M^{\sigma_{c,a}} = M$ as an abelian group, but modifying the left $L_K(E)$-action via
$$\ell\star m:=\sigma_{c,a}(\ell)m$$
for all $\ell \in L_K(E)$ and $m \in M$.

Consider the set $\Sig(E^0)$ of finite sums of distinct elements of $E^0$ endowed with the partial order
\[w\leq w' \text{ if }ww'=w'w=w\qquad w,w'\in \Sig(E^0).\]
By \cite[Lemma 1.5]{A83} for each left $L_K(E)$-module $M$ we have
\[
\varinjlim_{w\in\Sig(E^0)} L_K(E)w\otimes_{wL_K(E)w} wM\cong M.
\]
The automorphism $\sigma_{c,a}$ determines an auto-equivalence of the category $L_K(E)\text{-Mod}$ given by the functor
\[
 \varinjlim_{w\in\Sig(E^0)} L_K(E)^{\sigma_{c,a}}w\otimes_{wL_K(E)w} w\cdot-:L_K(E)\text{-Mod}\to L_K(E)\text{-Mod}
\]
 which assigns to each $L_K(E)$-module $M$ the twisted module 
 $$M^{\sigma_{c,a}} \cong  \varinjlim_{w\in\Sig(E^0)} L_K(E)^{\sigma_{c,a}}w\otimes_{wL_K(E)w} wM.$$
In particular, for every $a \in K$ with $a \neq 0$, the twisted module $(V^E_c)^{\sigma_{c,a}}$ is a simple left $L_K(E)$-module.

\underline{Second step}.
Let $K$ be any field, and let $p(x) \in K[x,x^{-1}]$ be an irreducible polynomial. It is not restrictive to assume that $p(x)$ is a \emph{basic irreducible} polynomial in $K[x,x^{-1}]$
—meaning that $p(x)$ is irreducible in $K[x]$ and $p(0)=-1$. Assume $p(x)=p_\ell x^\ell+\cdots+p_1x-1$. 

$$\mbox{If }a_p(x):=p_\ell x^{\ell-1}+\cdots+p_1, \mbox{ then } p(x)=xa_p(x)-1.$$ Define $K' := K[x,x^{-1}]/\langle p(x) \rangle$, and let $\overline{x}$ denote the coset $x + \langle p(x) \rangle \in K'$. Clearly $\overline{x}\not=0$ and hence invertible in $K'$: 
$\overline{x}^{-1}=a_p(\overline x)$.
Using the gauge automorphism $\sigma_{c,\overline x}$ of $L_{K'}(E)$ we can consider the simple left $L_{K'}(E)$-module $(V_c^E)^{\sigma_{c,\overline x}}$.


\begin{definition}
We define $(V_c^{E})^{p}$ to be the left $L_K(E)$-module obtained by restricting scalars from $K'$ to $K$ on the twisted left $L_{K'}(E)$-module $(V_c^{E})^{\sigma_{c,\overline{x}}}$.
\end{definition}
The left $L_K(E)$-module $(V_c^{E})^{p}$ is simple (see \cite[Lemma~3.3]{AR14} or \cite[Proposition 4.2]{MT26}).

For $p(x)=p_\ell x^\ell+\cdots+p_1x-1$ a basic irreducible element of $K[x,x^{-1}]$, and $c$ a cycle in $E$ with $s(c)=v$, we define
$$p(c) := p_\ell c^\ell+\cdots+p_1c- v.$$
By \cite[Theorem 4.3]{MT26}, the simple left $L_K(E)$-module $(V_c^{E})^{p}$ is isomorphic to $L_K(E)v/L_K(E)p(c)$. Observe that if $a(x)=a\in K$, then $p(x)=ax-1$, and $(V_c^{E})^{p} =       (V^E_c)^{\sigma_{c,a}}        $; in particular if $a(x)=1_K$, then $p(x)=x-1$, and $(V_c^{E})^{p}=V^E_{c}$.

\begin{definition}\label{Pruferdef}
Let $p(x)$ be a basic irreducible polynomial in $K[x,x^{-1}]$  and let $c$ be any cycle. The direct limit 
$$U^E_{p(x),c}:=\varinjlim_i L_K(E)v/L_K(E)p(c)^i$$
is called the \emph{Pr\"ufer left $L_K(E)$-module associated to $c$ and $p(x)$}.
We denote by 
$$\alpha^E_{p(x),c,i}:= \psi_i(v+L_K(E)p(c)^i),$$ where
$\{\psi_i:L_K(E)v/L_K(E)p(c)^i\to U^E_{p(x),c} \ | \  i\geq 1\}$ 
are the canonical embeddings in the direct limit.
\end{definition}
In particular,  left annihilator of $\alpha^E_{p(x),c,i}$ in $L_K(E)v$ is $L_K(E)p(c)^i$.   


\smallskip
The following properties play a fundamental role in \Cref{section:injenvcycle}.

\begin{lemma}\label{rem:contialpha-pversion}\label{rem:contialpha}\label{lemma:nuovo c1c1*}
We define $\alpha^E_{p(x),c,j}=0$ for all $j\leq 0.$  Then, for each $i\geq 1$,
\begin{enumerate}
\item $p(c)\alpha^E_{p(x),c,i}=\alpha^E_{p(x),c,i-1}\quad\forall i\geq 1$.
\item $u\alpha^E_{p(x),c,i}=\begin{cases}
      \alpha^E_{p(x),c,i}& \text{if }u=s(c), \\
     0 & \text{if }u\in E^0\setminus\{s(c)\}.
\end{cases}$
\item $ca_p(c)\alpha^E_{p(x),c,i}=\alpha^E_{p(x),c,i}+\alpha^E_{p(x),c,i-1} $
\item $c^* \alpha^E_{p(x),c,i}= a_p(c)(\sum_{j=0}^{i-1}(-1)^j \alpha^E_{p(x),c,i-j})$.
\item $f^*p_{s(c),s(f)}^*\alpha^E_{p(x),c,i}=0$ for each exit $f$ of $c$ and $i\geq 1$;
\item $e_n a_p(c)\alpha^E_{p(x),c,i}=p^*_{v,v_n}(\alpha^E_{p(x),c,i}+\alpha^E_{p(x),c,i-1})$ for all $i\geq 1$;
\item $e_{n-j}p^*_{v,v_{n-j+1}}\alpha^E_{p(x),c,i}=p^*_{v,v_{n-j}}\alpha^E_{p(x),c,i}$ for each $1\leq j\leq n-1$. In particular $e_1e^*_1 \alpha^E_{p(x),c,i}=\alpha^E_{p(x),c,i}$.
\end{enumerate}
\end{lemma}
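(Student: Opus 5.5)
The plan is to compute directly in the finite stages $L_K(E)v/L_K(E)p(c)^i$ of the direct limit of Definition~\ref{Pruferdef}. The transition map from stage $i$ to stage $i+1$ sends $x+L_K(E)p(c)^i$ to $xp(c)+L_K(E)p(c)^{i+1}$. Hence, inside stage $j\geq i$, the element $\alpha^E_{p(x),c,i}$ is represented by $p(c)^{j-i}$. Two basic facts will be used repeatedly. First, $c^*c=v$. Second, $a_p(c)$ is a polynomial in $c$ (with constant term $p_1v$), so it commutes with $c$ and satisfies $a_p(c)=va_p(c)v$.

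\emph{Items (1)--(4).}
\begin{itemize}
\item[(1)] In stage $i$, $p(c)\alpha^E_{p(x),c,i}$ is represented by $p(c)=p(c)^{i-(i-1)}$, which is the representative of $\alpha^E_{p(x),c,i-1}$. For $i=1$ we have $p(c)\in L_K(E)p(c)$, so the product is $0=\alpha^E_{p(x),c,0}$.
\item[(2)] The representative is $v$, so $u\cdot v$ equals $v$ if $u=v$ and $0$ otherwise.
\item[(3)] Since $p(c)=ca_p(c)-v$, we get $ca_p(c)\alpha_i=\alpha_i+p(c)\alpha_i$, and (1) finishes.
\item[(4)] Multiply (3) on the left by $c^*$ and use $c^*c=v$ to obtain $a_p(c)\alpha_i=c^*\alpha_i+c^*\alpha_{i-1}$. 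This gives the recursion $c^*\alpha_i=a_p(c)\alpha_i-c^*\alpha_{i-1}$, starting from $c^*\alpha_0=0$. Induction on $i$ yields the alternating sum.
\end{itemize}

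\emph{Item (5)} is the first place where the cycle structure matters. Let $f$ be an exit with $s(f)=v_k$, $f\neq e_k$, and put $\beta=p_{v,v_k}f$. Since $|p_{v,v_k}|<n$ and $\beta$ departs from $c$ at $f$, neither of $\beta$ and $c^m$ ($m\geq1$) is a prefix of the other. By \cite[Lemma~1.2.12]{AAM} this gives $\beta^*c^m=0$ for all $m\geq1$. Expanding $p(c)^i=(-1)^iv+\sum_{m\geq1}k_mc^m$, we get $\beta^*=(-1)^i\beta^*p(c)^i\in L_K(E)p(c)^i$, so $\beta^*\alpha_i=0$ in stage $i$.

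\emph{Items (6) and (7).}
\begin{itemize}
\item[(6)] We have $c=p_{v,v_n}e_n$ and $p^*_{v,v_n}p_{v,v_n}=v_n$, so $p^*_{v,v_n}c=e_n$. Then $e_na_p(c)\alpha_i=p^*_{v,v_n}ca_p(c)\alpha_i$, and (3) applies.
\item[(7)] This is the step I expect to need care: the obvious approach via (CK2) at $v_{n-j}$ together with (5) fails when $v_{n-j}$ is an infinite emitter. So I will argue as in (5). Write $q=p_{v,v_{n-j}}$. For $m\geq1$, $c^m$ extends $q$, so $q^*c^m=p_{v_{n-j},v}c^{m-1}$, which begins with $e_{n-j}$. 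Thus $q^*=(-1)^iq^*p(c)^i+e_{n-j}y$ for some $y$. Modulo $L_K(E)p(c)^i$ this gives $q^*v\equiv e_{n-j}y$, and hence $e_{n-j}e_{n-j}^*q^*\alpha_i=q^*\alpha_i$. Now $p_{v,v_{n-j+1}}=qe_{n-j}$ gives $p^*_{v,v_{n-j+1}}=e_{n-j}^*q^*$, which yields (7). For the special case, take $j=n-1$: then $q=v$ and $p_{v,v_2}=e_1$, so the identity reads $e_1e_1^*\alpha_i=\alpha_i$.
\end{itemize}
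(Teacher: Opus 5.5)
Your proof is correct. Items (1)--(4) and (6) are the paper's computations. The difference is in (5) and (7).

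\textbf{The paper's route.} It stays at the level of the generators. It writes $v\alpha_i=ca_p(c)\alpha_i-\alpha_{i-1}$ and uses the identities $f^*p^*_{v,s(f)}c=0$ and $e_{n-j}p^*_{v,v_{n-j+1}}c=p^*_{v,v_{n-j}}c$. It then inducts on $i$; for (7) this becomes a four-case induction on $i$ and $j$.

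\textbf{Your route.} You work in the single stage $L_K(E)v/L_K(E)p(c)^i$ and use the expansion $p(c)^i=(-1)^iv+\sum_{m\ge 1}k_mc^m$. For (5) this gives $\beta^*\in L_K(E)p(c)^i$ in one step. For (7) it gives $q^*\equiv e_{n-j}y$ modulo $L_K(E)p(c)^i$. Hence $q^*\alpha_i$ lies in $e_{n-j}U^E_{p(x),c}$ and is fixed by the idempotent $e_{n-j}e_{n-j}^*$.

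\textbf{Comparison.} Both routes rest on the same input: positive powers of $c$ are killed by paths leaving $c$, and they begin with the expected edge of $c$. Neither uses (CK2), so both are valid at infinite emitters. Your version is shorter and needs no induction. The paper's version never has to pick representatives in the stages of the direct limit.

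\textbf{A genuine bonus.} Your special case with $q=v$ uses only that each $c^m$ begins with $e_1$. So it also proves $e_1e_1^*\alpha_i=\alpha_i$ when $c$ is a loop ($n=1$). That case lies outside the range $1\le j\le n-1$, so the paper's induction does not formally cover it, yet it is used later for arbitrary exclusive cycles. State this explicitly rather than writing ``take $j=n-1$''.

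\textbf{A cosmetic slip.} With the paper's convention $p_{v,v}=v$, your formula $q^*c^m=p_{v_{n-j},v}c^{m-1}$ is wrong for $j=n-1$; the correct value there is $c^m$. You only use that this element begins with $e_{n-j}$, which remains true, so nothing breaks.
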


\begin{proof}
The equality (1) follows from 
\begin{align*}
p(c)\alpha^E_{p(x),c,i}&=p(c)\psi_i(v+L_K(E)p(c)^i)\\
&=\psi_i(p(c)+L_K(E)p(c)^i)\\
&=\psi_{i-1}(v+L_K(E)p(c)^{i-1})=\alpha^E_{p(x),c,i-1}.
\end{align*}

The equality (2) follows from 
\begin{align*}u\alpha^E_{p(x),c,i}&=u\psi_i(v+L_K(E)p(c)^i)\\
&=\psi_i(u\cdot v+L_K(E)p(c)^i)\\
&=\begin{cases}
  \psi_i(v+L_K(E)p(c)^i) =\alpha^E_{p(x),c,i}   & \text{if }u=v, \\
 \psi_i(0+L_K(E)p(c)^i)=0     & \text{otherwise}.
\end{cases}
\end{align*}

(3) Since $p(c)\alpha^E_{p(x),c,i}=(ca_p(c)-v)\alpha^E_{p(x),c,i}$, we get
\[
ca_p(c)\alpha^E_{p(x),c,i}=\alpha^E_{p(x),c,i}+\alpha^E_{p(x),c,i-1}.
\]

(4) By property (3) we get
\begin{align*}
    c^*\alpha^E_{p(x),c,i}&=c^*\big(ca_p(c)\alpha^E_{p(x),c,i}-\alpha^E_{p(x),c,i-1}\big)\\
    &=a_p(c)\alpha^E_{p(x),c,i}-c^*\alpha^E_{p(x),c,i-1}.
\end{align*}
Using this, (4) follows by an induction argument.

(5) Since   $p^*_{s(c),s(f)}c$ is a subpath of $c$ of length $\geq 1$, and   since $f^*e_i=0$ for each edge $e_i$ of $c$,  we get $f^*p^*_{s(c),s(f)}c = 0.$  With the previously noted $p(c)\alpha^E_{p(x),c,i}=(ca_p(c)-v)\alpha^E_{p(x),c,i}$, this gives
\begin{align*}
f^*p^*_{s(c),s(f)}\alpha^E_{p(x),c,i}&=-f^*p^*_{s(c),s(f)}p(c)\alpha^E_{p(x),c,i}\\
&=-f^*p^*_{s(c),s(f)}\alpha^E_{p(x),c,i-1} \ \ \ \ \mbox{by (1)}\\
&= \ \cdots \ =(-1)^{i-1}f^*p^*_{s(c),s(f)}\alpha^E_{p(x),c,1}\\
&=(-1)^{i}f^*p^*_{s(c),s(f)}p(c)\alpha^E_{p(x),c,1}=0.
\end{align*}

(6) We have
\begin{align*}
    e_n a_p(c)\alpha^E_{p(x),c,i}&=p^*_{v,v_n}ca_p(c)\alpha^E_{p(x),c,i}\\
    &=p^*_{v,v_n}((\alpha^E_{p(x),c,i}+\alpha^E_{p(x),c,i-1}).
\end{align*}

(7) We proceed by induction on $i\geq 1$, and $1\leq j\leq n-1$. Let $i=1=j$:
\begin{align*}
    e_{n-1}p^*_{v,v_n}\alpha^E_{p(x),c,1}&=e_{n-1}p^*_{v,v_n}ca_p(c)\alpha^E_{p(x),c,1}\\
    &=e_{n-1}e_n a_p(c)\alpha^E_{p(x),c,1}\\
    &=p^*_{v,v_{n-1}}ca_p(c)\alpha^E_{p(x),c,1}\\
    &=p^*_{v,v_{n-1}}\alpha^E_{p(x),c,1}
\end{align*}
Let $i=1$, and $j\geq 2$:
\begin{align*}
    e_{n-j}p^*_{v,v_{n-j+1}}\alpha^E_{p(x),c,1}&=e_{n-j}e_{n-(j-1)}p^*_{v,v_{n-(j-1)+1}}\\
    &=e_{n-j}\cdots e_{n-1}p^*_{v,v_{n}}\alpha^E_{p(x),c,1}\quad\text{by $(6)$}\\
    &=e_{n-j}\cdots e_{n-1}e_na_p(c)\alpha^E_{p(x),c,1}\\
    &=p^*_{v, v_{n-j}}ca_p(c)\alpha^E_{p(x),c,1}\\
    &=p^*_{v, v_{n-j}}\alpha^E_{p(x),c,1}.
\end{align*}
Let $i\geq 2$, and $j=1$:
\begin{align*}
    e_{n-1}p^*_{v,v_n}\alpha^E_{p(x),c,i}&=e_{n-1}p^*_{v,v_n}\big(ca_p(c)\alpha^E_{p(x),c,i}-\alpha^E_{p(x),c,i-1}\big)\\
    &=e_{n-1}e_na_p(c)\alpha^E_{p(x),c,i}-p^*_{v,v_{n-1}}\alpha^E_{p(x),c,i-1}\\
    &=p^*_{v,v_{n-1}}ca_p(c)\alpha^E_{p(x),c,i}-p^*_{v,v_{n-1}}\alpha^E_{p(x),c,i-1}\\
    &=p^*_{v,v_{n-1}}(\alpha^E_{p(x),c,i}+\alpha^E_{p(x),c,i-1})-p^*_{v,v_{n-1}}\alpha^E_{p(x),c,i-1}\\
    &=p^*_{v,v_{n-1}}\alpha^E_{p(x),c,i}.
\end{align*}
Let $i\geq 2$, and $j\geq 2$:
\begin{align*}
    e_{n-j}p^*_{v,v_{n-j+1}}\alpha^E_{p(x),c,i}&=e_{n-j}p^*_{v,v_{n-j+1}}\big(ca_p(c)\alpha^E_{p(x),c,i}-\alpha^E_{p(x),c,i-1}\big)\\
    &=e_{n-j}\cdots e_n a_p(c)\alpha^E_{p(x),c,i}-p^*_{v,v_{n-j}}\alpha^E_{p(x),c,i-1}\\
    &=p^*_{v,v_{n-j}}ca_p(c)\alpha^E_{p(x),c,i}-p^*_{v,v_{n-j}}\alpha^E_{p(x),c,i-1}\\
    &=p^*_{v,v_{n-j}}(\alpha^E_{p(x),c,i}+\alpha^E_{p(x),c,i-1})-p^*_{v,v_{n-j}}\alpha^E_{p(x),c,i-1}\\
    &=p^*_{v,v_{n-j}}\alpha^E_{p(x),c,i}.  \qedhere
\end{align*}
\end{proof}


\section{The injective envelopes of simple modules \\ associated to exclusive cycles}
\label{section:injenvcycle}

In this, the final section of the article, we prove our main result (Theorem \ref{injhulltheoremc}).   We start by giving the key definition.  

  \begin{definition}
Let $c$ be a cycle in the graph $E$.  Following \cite{AR14}, we call $c$ \emph{exclusive} in case no vertex in $c^0$ is the base of a cycle other than the corresponding cyclic permutations of $c$ itself.  
      \end{definition}

   Our objective is to explicitly construct the injective envelope of the  simple $L_K(E)$-module $V_c^E$ in the situation where $c$ is an exclusive cycle.  

   We note that two similar types of cycles have been studied elsewhere in the literature:  source cycles (no edge in the graph has range in $c^0$ other than the edges of $c$), and maximal cycles (there is no path in $E$ that starts in a cycle $d\neq c$ and ends in $c$).   Easily any source cycle is maximal, and in turn any maximal cycle is exclusive.

\smallskip

\emph{We assume throughout this section that $c=e_1\cdots e_n$ denotes an exclusive cycle, that $v_i$ is the source vertex of the edge $e_i$ for $1\leq i\leq n$, and that $v:=v_1=:v_{n+1} =s(c)$.} 

\smallskip

(At various points throughout this section, for emphasis, we will  remind the reader of this assumption.)

We present in detail the construction of the injective envelope of the simple left $L_K(E)$-module $V^{E}_{p(x),c}$ in the case $p(x)=x-1$, i.e., of the simple module $V^E_{c}$. 
At the end of the section we will then present (without details) the construction of the injective envelope of the simple module $V^{E}_{p(x),c}$ for an arbitrary basic irreducible polynomial $p(x)$; the necessary tools required to establish the general result from the specific  are presented   in \Cref{section:LPA}.  

For each $i\geq 1$,  denote simply by $$\alpha^E_{c,i}$$ the element $\alpha^E_{x-1,c,i}$  of the Pr\"ufer left $L_K(E)$-module $U^E_c$.

For any vertex $w$ in $E$, the tree $T(w)$ is a hereditary subset of $E^0$.   For the vertex $v=s(c)$ of $E$, we  denote by $H$ the set of vertices 
$$H \ := \ T(v)\setminus\{v=v_1,v_2,...,v_n\} \ = \ T(v) \setminus c^0.$$

It will be important for us to consider the full subgraph $T(v)$ of $E$ as a graph in its own right;  for emphasis, we denote this graph by $E_{T(v)}$.

\begin{lemma}\label{HisheredsatinT(V)}   Let $c$ be an exclusive cycle in $E$, and $H$ the set defined above.  

(1)  The set $H$ is a hereditary subset of $E^0$ (and thus also of $E_{T(v)}^0$). 

  (2) The set $H$ is a hereditary and saturated subset of $E_{T(v)}^0$. 
\end{lemma}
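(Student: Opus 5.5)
For (1), I take $w\in H$ and $w\geq u$ in $E$, and show $u\in H$. Since $w\in T(v)$ and $T(v)$ is hereditary, $u\in T(v)$. So it remains to rule out $u\in c^0$. Suppose $u=v_j$ for some $j$. There is a path $\mu$ from $v$ to $w$ (since $w\in T(v)$) and a path $\nu$ from $w$ to $v_j$. Then $\mu\,\nu\,p_{v_j,v}$ is a closed path based at $v$ that passes through $w\notin c^0$.

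From this closed path I extract a cycle based at a vertex of $c^0$ that is not a cyclic permutation of $c$, contradicting exclusivity. Concretely, I look at the closed path $\nu\, p_{v_j,v}\,\mu$ based at $w$. Among its vertices, I consider the first return to $c^0$ after leaving $c^0$. More carefully, I walk from $v$ along $\mu$ until the first vertex off $c^0$, say the edge $g$ with $s(g)=v_k\in c^0$ and $r(g)\notin c^0$. I then continue along $\mu\nu$ until the first subsequent vertex back in $c^0$, say $v_l$. This gives a path $\gamma$ from $v_k$ to $v_l$ whose intermediate vertices all lie off $c^0$. Then $\gamma\, p_{v_l,v_k}$ is a closed path based at $v_k$.

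The main obstacle is that this closed path might repeat vertices, so it need not literally be a cycle. I handle this by shortening: take a shortest path $\gamma'$ from $v_k$ to $v_l$ that starts with the edge $g$ (an exit of $c$, or at least an edge not in $c$) and avoids $c^0$ in its interior. The vertices of $\gamma'$ are then distinct, and its interior avoids $c^0$, while $p_{v_l,v_k}$ has distinct vertices inside $c^0$. So $\gamma' p_{v_l,v_k}$ is a cycle based at $v_k$. It contains the vertex $r(g)\notin c^0$, so it is not a cyclic permutation of $c$, a contradiction. (The degenerate case $k=l$ is fine: then $\gamma'$ is itself a cycle based at $v_k$.) Hence $u\notin c^0$, and $H$ is hereditary in $E$. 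It is then hereditary in $E_{T(v)}^0$, since paths in that full subgraph are paths in $E$.

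For (2), hereditariness is (1). For saturation, let $u\in T(v)$ be regular in $E_{T(v)}$ with all $r(e)\in H$ for $e\in s^{-1}(u)$. Edges from $u\in T(v)$ in $E$ all lie in $E_{T(v)}$ by heredity of $T(v)$, so $u$ has the same emitted edges in both graphs. If $u\notin c^0$ then $u\in H$ and we are done. If $u=v_j\in c^0$, then the edge $e_j$ of $c$ is emitted by $u$ and has range $v_{j+1}\in c^0$, so $r(e_j)\notin H$, contradicting the hypothesis. Hence $u\in H$, proving saturation.
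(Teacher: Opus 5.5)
Your proof is correct and follows the same route as the paper. For (1), the paper simply asserts that exclusivity forbids any path from a vertex of $T(v)\setminus c^0$ back into $c^0$; your shortest-path extraction of a cycle $\gamma' p_{v_l,v_k}$ based at $v_k$ and passing through $r(g)\notin c^0$ supplies exactly the detail the paper leaves implicit. For (2), you use the paper's own observation: each $v_j\in c^0$ emits $e_j$ with $r(e_j)=v_{j+1}\notin H$, so saturation can never force a vertex of $c^0$ into $H$.
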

\begin{proof} 
(1)   Follows easily from exclusivity,  as no path from a vertex in $T(v)$ outside of $c^0$ can have range in $c^0$.

(2)   Hereditariness has been established in (1).  But no vertex in $c^0$ can be in the saturated closure of $H$, because each vertex $v_i$  of $c^0$ emits an edge not in $H$ (specifically, the edge $e_i$ has range $v_{i+1} \notin H$).  
\end{proof}

\begin{proposition}\label{prop:U^pdescrizione}\label{prop:elementsU}
    Let $c$ be an exclusive cycle in $E$, and $p(x)$ a basic irreducible polynomial in $K[x,x^{-1}]$.    Then the elements of the Pr\"ufer left $L_K(E)$-module $U^E_{p(x),c}$ are finite sums  
    of the form
    \[
   \sum k_{\xi, i, j}  \xi p^*_{v,r(\xi)}c^j\alpha^E_{p(x),c,i}\ ,
    \]
where $ k_{\xi, i, j} \in K,  \xi\in F^+_E(c^0), 0\leq j<\deg p(x),$ and  $i\geq 1.$   

In particular, when $p(x)=x-1$, the elements of the Pr\"ufer left $L_K(E)$-module $U^E_{c}$ are finite sums  of the form
    \[
   \sum k_{\xi, i}  \xi p^*_{v,r(\xi)}\alpha^E_{c,i}\ ,
    \]
where $ k_{\xi, i} \in K,  \xi\in F^+_E(c^0)$, and  $i\geq 1$. 
\end{proposition}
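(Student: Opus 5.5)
Every element of $U^E_{p(x),c}$ lies in some $\psi_i(L_K(E)v/L_K(E)p(c)^i)$. So it suffices to show that, for each $i\geq 1$ and each $r\in L_K(E)v$, the element $r\,\alpha^E_{p(x),c,i}$ is a $K$-combination of elements $\xi p^*_{v,r(\xi)}c^j\alpha^E_{p(x),c,i'}$ with $\xi\in F^+_E(c^0)$, $0\le j<\deg p$ and $1\le i'\le i$. By linearity we may take $r=\mu\nu^*$ with $\mu,\nu$ real paths, $s(\nu)=v$ and $r(\mu)=r(\nu)$. We then proceed in two stages: first simplify $\nu^*\alpha^E_{p(x),c,i}$, then multiply on the left by $\mu$.

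\textbf{Stage 1: the ghost part.} Every real path $\nu$ with $s(\nu)=v$ either runs entirely along the cycle, so that $\nu=c^m p_{v,w}$ for some $w\in c^0$, or it has the form $c^m p_{v,s(f)} f\nu'$ with $f$ an exit of $c$. Exclusivity is used here, together with \Cref{HisheredsatinT(V)}: once a path leaves $c$ it can never return.
\begin{itemize}
\item In the second case, \Cref{rem:contialpha}(5) kills the term.
\item In the first case, $\nu^*=p^*_{v,w}(c^*)^m$. Repeated use of \Cref{rem:contialpha}(4) rewrites $(c^*)^m\alpha^E_{p(x),c,i}$ as a combination of polynomials in $c$ applied to $\alpha^E_{p(x),c,i'}$ with $i'\le i$.
\end{itemize}
Next we reduce each polynomial in $c$ modulo $p(c)$. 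By \Cref{rem:contialpha}(1), $p(c)\alpha_{i'}=\alpha_{i'-1}$, so we can perform division with remainder by $p(x)$ and induct downward on $i'$. Because $p(0)=-1$, the reduction also handles negative powers: $c^{-1}$ acts as $a_p(c)$ up to lower-index terms. This leaves only powers $c^j$ with $0\le j<\deg p$. Consequently $\nu^*\alpha_i$ becomes a combination of the terms $p^*_{v,w}c^j\alpha_{i'}$.

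\textbf{Stage 2: the real part.} We must now compute $\mu\, p^*_{v,w}c^j\alpha_{i'}$ for a real path $\mu$ with $r(\mu)=w\in c^0$. Factor $\mu=\xi\,\kappa$, where $\xi$ is the shortest initial segment of $\mu$ with $r(\xi)\in c^0$ (so $\xi\in F^+_E(c^0)$) and $\kappa$ is a path inside $c$ from $r(\xi)$ to $w$. By exclusivity, once $\mu$ enters $c^0$ it stays on the cycle, which is why such a $\kappa$ exists. The remaining task is to show that $\kappa\,p^*_{v,w}c^j\alpha_{i'}$ equals $p^*_{v,r(\xi)}$ times a polynomial in $c$ applied to the $\alpha$'s; the reduction of Stage 1 then finishes the proof.

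This rewriting is the main obstacle. The plan is to handle it edge by edge:
\begin{itemize}
\item For an edge $e_k$ of $c$ with $k\neq n$, \Cref{rem:contialpha}(7) gives $e_{k}p^*_{v,v_{k+1}}\alpha=p^*_{v,v_k}\alpha$.
\item For the wrap-around edge $e_n$ (the case $w=v$), \Cref{rem:contialpha}(6) together with (3) gives $e_n\alpha=p^*_{v,v_n}\,c\,\alpha$.
\end{itemize}
The difficulty is that this must be combined with the factor $c^j$, which commutes with neither $e_n$ nor the ghost paths $p^*$. To get around this, first apply (3)/(4) to rewrite $c^j\alpha_{i'}$ as a combination of the $\alpha_{i''}$, because $c$ acts on the span of the $\alpha$'s by a polynomial (invertible) operator. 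Then apply (6) and (7), and afterwards reduce powers of $c$ modulo $p$ again as in Stage 1.

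Finally, the statement for $p(x)=x-1$ is the special case $\deg p=1$, where only $j=0$ occurs.
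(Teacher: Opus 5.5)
\textbf{Overall.} Your architecture matches the paper's proof. You reduce to monomials $\mu\nu^*$ acting on $\alpha^E_{p(x),c,i}$, kill paths through an exit with \Cref{rem:contialpha}(5), expand $(c^*)^m$ with (4), and split the real part as $\xi\kappa$ with $\xi\in F^+_E(c^0)$ using exclusivity. Finally, you reduce powers of $c$ modulo $p(x)$ using (1). Your reduction step and Stage~1 are fine.

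\textbf{The gap.} It sits in Stage~2, at the step you call the main obstacle. You propose to ``rewrite $c^j\alpha_{i'}$ as a combination of the $\alpha_{i''}$'', and this is false once $\deg p(x)\geq 2$. Item (3) only says that $c\,a_p(c)$ preserves the $K$-span of the $\alpha$'s; it says nothing of this kind for $c$ itself. Take $E$ a single loop $c$ at $v$, so $L_K(E)\cong K[x,x^{-1}]$. Suppose $c\alpha_1=\sum_{i\le N}k_i\alpha_i$. Applying $p(c)^{N-1}$ kills the left side and leaves $k_N\alpha_1$, so $k_N=0$; descending, $c\alpha_1=k_1\alpha_1$. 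That would put $x-k_1$ in $(p(x))$, which is impossible. If your claim held, the index $j$ in the statement would be superfluous. As written, Stage~2 therefore proves only the case $p(x)=x-1$, where $c\alpha_i=\alpha_i+\alpha_{i-1}$.

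\textbf{The repair.} You never need to move $c^j$ past anything. For $1\le k\le n-1$ we have $p^*_{v,v_{k+1}}=e_k^*p^*_{v,v_k}$, and $p^*_{v,v_k}c$ is a real path beginning with $e_k$. Hence $e_kp^*_{v,v_{k+1}}c^j=p^*_{v,v_k}c^j$ already holds in $L_K(E)$ for $j\ge 1$; only the case $j=0$ needs \Cref{rem:contialpha}(7). This is the paper's observation that $p_{v,w}p^*_{v,w}c=c$ while $p_{v,w}p^*_{v,w}\alpha^E_{p(x),c,i}=\alpha^E_{p(x),c,i}$ by (7). Concretely, since $p_{v,r(\xi)}\kappa$ is a path along $c$ from $v$ to $w$, write $\kappa=p^*_{v,r(\xi)}c^{m'}p_{v,w}$. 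Then $\kappa p^*_{v,w}c^j\alpha^E_{p(x),c,i'}=p^*_{v,r(\xi)}c^{m'+j}\alpha^E_{p(x),c,i'}$. Your Stage~1 reduction of powers of $c$ modulo $p(x)$ then finishes the proof.
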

\begin{proof}
The Pr\"ufer left $L_K(E)$-module $U^E_{p(x),c}$ is generated by $\alpha^E_{p(x),c,i}$, $i\geq 1$. It is enough to prove that $\mu \alpha^E_{p(x),c,i}$ has the prescribed form for any monomial $\mu=\mu_1\mu_2^*\in L_K(E)$ such that $\mu \alpha^E_{p(x),c,i}\not=0$. Necessarily $r(\mu)=s(\mu_2)=v$ by \Cref{rem:contialpha}(2). Therefore, all the vertices of the edges in $\mu_2$ belong to the hereditary set $T(v)$. Since $c$ is exclusive, $H:=T(v)\setminus c^0$ is also hereditary in $E^0$ by Lemma \ref{HisheredsatinT(V)}(1). Then we have
     \[
     \mu_2=c^m p_{v,v_x}\nu_1\cdots\nu_t \quad \ell, t\geq 0, 1\leq x\leq n
     \]
     with $v_x=s(\nu_1)\in c^0$, and $r(\nu_1)=s(\nu_2)$, ..., $s(\nu_t)$, $r(\nu_t)\in H$. If $t>0$, then $\nu_1$ is an exit of the cycle $c$ and therefore by \Cref{rem:contialpha}(4,5), we get
     \[0\not=\mu\alpha^E_{p(x),c,i}=\mu_1\mu_2^*\alpha^E_{p(x),c,i}=\mu_1\nu_t^*\cdots \underbrace{\nu_1^* p_{v,v_x}^*(c^*)^m \alpha^E_{p(x),c,i}}_{=0}=0,\]
     a contradiction.
     Then we have
     \[\mu\alpha^E_{p(x),c,i} =\mu_1 p_{v,v_x}^*(c^*)^m \alpha^E_{p(x),c,i}.\]
     By \Cref{rem:contialpha}(4), $(c^*)^m \alpha^E_{p(x),c,i}$ is a finite linear combination $\Xi$ of elements of the form $c^h \alpha^E_{p(x),c,j}$ with $0\leq h<\ell$, and $1\leq j\leq i$.
     If $\mu_1=v_x$, we are done. Otherwise, again because $H$ is hereditary, $r(\mu_1)=v_x\in c^0$ implies that the vertices of the path $\mu_1$ are not in $H$.
     If $\mu_1=\xi_1\cdots \xi_y$, $1\leq y$, let us denote by $\hat z:=\min\{z\mid 1\leq z\leq y, s(\xi_z)\in c^0\}$. Then $\mu_1=\xi_1\cdots \xi_{\hat z-1}p_{r(\xi) v}c^{m'}p_{v,v_x}$ with $\xi:=\xi_1\cdots \xi_{\hat z-1}\in F^+(c^0)$. Observe that $p_{v,v_x}p^*_{v,v_x}c=c$, and $p_{v,v_x}p^*_{v,v_x}\alpha^E_{p(x),c,i}=\alpha^E_{p(x),c,i}$ by \Cref{lemma:nuovo c1c1*}(7). Moreover, if $x^t=q_{s_1}(x)p(x)^{s_1}+\cdots+q_1(x)p(x)+q_0(x)$ in $K[x]$, $\deg q_j(x)<\deg p(x)$, then by \Cref{lemma:nuovo c1c1*}(1),
     \[
     c^t \alpha^E_{p(x),c,i}=q_{s_1}(c)\alpha^E_{p(x),c,i-s_1}+\cdots+q_1(c)\alpha^E_{p(x),c,i-1}+q_0(c)\alpha^E_{p(x),c,i}.
     \]
     Therefore, we get
     \begin{align*}\mu\alpha^E_{p(x),c,i}&=\xi p_{v, r(\xi)}c^{m'}p_{v,v_x}p_{v,v_x}^*\Xi=\xi p_{ v,r(\xi)}^*c\cdot c^{m'}\Xi\\
     &=\xi p_{v,r(\xi)}^*\Xi'
     \end{align*}
     for a suitable finite linear combination $\Xi'$ of elements of the form $c^h \alpha^E_{p(x),c,j}$ with $0\leq h<\ell$, and $1\leq j\leq i$.
\end{proof}

We establish the desired description of the injective envelope of $V^E_{c}$ (Theorem \ref{injhulltheoremc}) by first establishing two major pieces.  In the First Piece,   
we build the ideal $I(v)$ of $L_K(E)$ generated by $v$; $I(v)$ is a ring in its own right, and the $L_K(E)$-modules $U_c^E$ and $V^E_{c}$ are in fact modules over $I(v)$.   We show (Corollary  \ref{cor;Pruferiniettivo})   that $U_c^E$ is the injective envelope of  $V^E_{c}$ as $I(v)$-modules. Establishing this fact will require a tour through various key ideas in Leavitt path algebras, including the realization of a specific quotient of a  Leavitt path algebra of a (possibly infinite) graph as the Leavitt path algebra of an associated (finite) graph, as well as the Morita equivalence result Theorem \ref{thm:restricted_hedgehog_Morita}.  In the Second Piece we explicitly build (Definition \ref{def;KF+0}) a $K$-vector space  (a space of ``formal series''), and then work quite hard to show that an appropriate $K$-subspace of this $K$-space indeed admits a left $L_K(E)$-module structure (Proposition \ref{prop:formalisTmod}), in which   $V^E_{c}$ lives as an essential submodule.    With these two key pieces in place, we then establish our main result (Theorem \ref{injhulltheoremc}), which asserts that this $L_K(E)$-module is indeed the injective envelope of the $L_K(E)$-module $V^E_{c}$.


\subsection{The First Piece, in which  we show that $U_c^E$ is the injective envelope of  $V^E_{c}$, viewed as modules over the two-sided ideal $I(v)$ of $L_K(E)$ generated by $v$. }

Consider the Leavitt path algebra $L_K(E_{T(v)})$ associated to the restricted graph $E_{T(v)}$,  and let $I(H)$ denote the two-sided ideal of $L_K(E_{T(v)})$ generated by $H = T(v) \setminus c^0$. 
Since by Lemma \ref{HisheredsatinT(V)}(2) $H$ is a hereditary and saturated subset in $E_{T(v)}$, invoking \cite[Theorems 1.5.18, 2.4.12, 2.4.15]{AAM} we get that the quotient $K$-algebra $L_K(E_{T(v)})/I(H)$ is isomorphic to the Leavitt path algebra $L_K(F)$, where $F:= E_{T(v)}/(H,\emptyset)$ is the graph given by:
\begin{align*}
F^0&=c^0\sqcup \{u'\mid u\in B_H\}=c^0\sqcup \{v'_i\mid v_i\in \text{Inf}(E_{T(v)}),\ 1\leq i\leq n\}\\
F^1&=\{e_1,..., e_n\}\sqcup \{e_i'\mid r(e_i)=v_{i+1}\in\text{Inf}(E_{T(v)}),\ 1\leq i\leq n\}\\
s_{F}(e_i)&=v_i,  \ \ \ r_{F}(e_i) = v_{i+1}, \ \ \text{and} \ \\
 s_{F}(e_i') &= v_{i}, \ \  r_{F}(e_i') = v_{i+1}' \ \ \text{whenever} \ v_{i+1} \in \text{Inf}(E_{T(v)}).
\end{align*}
Clearly $F$ is a finite graph containing $c$ as a source cycle.
Using the explicit descriptions of the isomorphisms given in the results of \cite{AAM} cited above,  the composition 
$$\Phi: L_K(E_{T(v)})\to L_K(F)$$
 of the projection $L_K(E_{T(v)})\to L_K(E_{T(v)})/I(H)$ with the aforementioned isomorphism $L_K(E_{T(v)})/I(H)\to L_K(F)$ is defined in the following way on the vertices, real edges,  and ghost edges of the cycle $c$:
\begin{align*}\text{If $v_i\in {\rm Inf}(E_{T(v)})$, then }&\Phi(v_i)=v_i+v_i'\text{ for $i=1,...,n$};\\
\text{If $v_i\in {\rm Reg}(E_{T(v)})$, then }&\Phi(v_i)=v_i\text{ for $i=1,...,n$};\\
\text{If $r(e_i) \in {\rm Inf}(E_{T(v)})$, then }&\Phi(e_i)=e_i+e_i'\text{ for $i=1,...,n$};\\
\text{If $r(e_i) \in {\rm Reg}(E_{T(v)})$, then }&\Phi(e_i)=e_i\text{ for $i=1,...,n$};\\
\text{If $r(e_i) \in {\rm Inf}(E_{T(v)})$, then }&\Phi(e_i^*)=e_i^*+(e_i')^*\text{ for $i=1,...,n$};\\
\text{If $r(e_i) \in {\rm Reg}(E_{T(v)})$, then }&\Phi(e_i^*)=e_i^*\text{ for $i=1,...,n$}.\end{align*}

\begin{lemma}\label{lemma:Phi(e_ie_i^*)}  Let $\Phi$ be the ring homomorphism defined above. 
\begin{enumerate}
    \item For each $1\leq i\leq n$ we have 
    $\Phi(e_ie_i^*) = v_i$.
    
    \item If $v\in {\rm Reg}(E_{T(v)})$, then $\Phi(c) = c$.  Otherwise, if $v\in {\rm Inf}(E_{T(v)})$, then $\Phi(c) =e_1 \cdots e_{n-1}(e_n+e'_n)$.  

    \item  For each $1\leq i\leq n$ we have  $\Phi((c-v)^i e_1e_1^\ast) = (c-v)^i$.
    \end{enumerate}
\end{lemma}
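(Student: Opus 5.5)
The plan is to verify each item by direct computation from the explicit formulas for $\Phi$ on vertices, edges and ghost edges of $c$, together with the relations in $L_K(F)$. The key structural fact about $F$ is that each $v_i$ emits exactly one edge or two edges. It always emits $e_i$. It also emits $e_i'$ precisely when $v_{i+1}\in{\rm Inf}(E_{T(v)})$. All of these are regular vertices of $F$, so (CK2) holds at every $v_i$, while the vertices $v_i'$ are sinks.

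For (1), there are two cases according to whether $r(e_i)=v_{i+1}$ lies in ${\rm Reg}(E_{T(v)})$ or in ${\rm Inf}(E_{T(v)})$.

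In the regular case, $\Phi(e_ie_i^*)=e_ie_i^*$. Since $e_i$ is the only edge of $F$ emitted by $v_i$, (CK2) gives $e_ie_i^*=v_i$.

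In the infinite case, $\Phi(e_ie_i^*)=(e_i+e_i')(e_i^*+(e_i')^*)$. Expanding, the cross terms vanish: they are $e_i(e_i')^*$ and $e_i'e_i^*$, whose factors have different ranges in $F$, since $r(e_i)=v_{i+1}\neq v'_{i+1}=r(e_i')$. The product therefore equals $e_ie_i^*+e_i'(e_i')^*$, which is $v_i$ by (CK2) at $v_i$ in $F$.

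For (2), multiply out $\Phi(c)=\Phi(e_1)\cdots\Phi(e_n)$. For $i<n$, any primed summand $e_i'$ ends at the sink $v_{i+1}'$, so $e_i'\Phi(e_{i+1})=0$, because $\Phi(e_{i+1})$ has source $v_{i+1}$. Hence only the unprimed $e_i$ survive for $i\le n-1$. The last factor $\Phi(e_n)$ equals $e_n$ or $e_n+e_n'$ according as $r(e_n)=v_1=v$ is regular or an infinite emitter in $E_{T(v)}$. This gives the two stated formulas.

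For (3), $\Phi$ is a ring homomorphism, so $\Phi((c-v)^ie_1e_1^*)=\Phi(c-v)^i\,\Phi(e_1e_1^*)=\Phi(c-v)^i v_1$, using (1). This holds as an element of $L_K(F)$, even though $\Phi(v)$ itself may be $v+v'$.

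In the regular case, $\Phi(c-v)=c-v$ and $(c-v)^iv=(c-v)^i$, so we are done.

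In the infinite case, $\Phi(c-v)=e_1\cdots e_{n-1}(e_n+e_n')-(v+v')$. The key step is to absorb the right factor $v$ into the last term:
\[
\Phi(c-v)\,v = e_1\cdots e_{n-1}e_n - v = c-v.
\]
Here $e_n'v=0$ because $r(e_n')=v'\neq v$, and $v'v=0$. Moving $v$ leftward one factor at a time completes the argument. Writing $A:=\Phi(c-v)$, we have $Av=c-v$, and also $v(c-v)=c-v$ and $v\cdot v'=0$, so $vA=vc'-v$ where $c':=e_1\cdots e_{n-1}(e_n+e_n')$. Inducting on $i$,
\[
A^iv=A^{i-1}(c-v)=A^{i-1}v(c-v)=(c-v)^{i-1}(c-v)=(c-v)^i,
\]
where the middle step uses $(c-v)=v(c-v)$ and the inductive hypothesis $A^{i-1}v=(c-v)^{i-1}$.

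The only real care needed, and the step I expect to be the main obstacle, is this last bookkeeping: $\Phi(v)\neq v$, so one must not naively write $\Phi(c-v)=c-v$. The factor $e_1e_1^*$ is what provides the projection onto $v$ that makes the identity hold.
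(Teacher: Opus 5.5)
Your proposal is correct and follows essentially the same route as the paper. For (1) you split into the cases $r(e_i)$ regular or infinite and apply (CK2) at $v_i$ in $F$. For (2) you kill every expanded term containing some $e_j'$ with $j<n$. For (3) you absorb the right factor $\Phi(e_1e_1^*)=v$ using $e_n'v=v'v=0$. Your explicit induction $A^iv=A^{i-1}v(c-v)$ just spells out the step that the paper compresses into a single line.
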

\begin{proof}
   1.  If $v_{i+1}=r(e_i)$ is not an infinite emitter in $E_{T(v)}$, then $s_F^{-1}(v_i)=\{e_i\}$ and hence by the (CK2) relation applied in $L_K(F)$
    \[
    \Phi(e_ie_i^*)=\Phi(e_i)\Phi(e_i^*)=e_ie_i^*=v_i.
    \]
    If $v_{i+1}=r(e_i)$ is an infinite emitter, then $s_F^{-1}(v_i)=\{e_i, e_i'\}$ and hence again by the (CK2) relation applied in $L_K(F)$
    \[
    \Phi(e_ie_i^*) = (e_i + e_i')(e_i^* + (e_i')^*) = e_ie_i^* + 0 + 0 + e_i'(e_i')^* = v_i.
    \]

    2. In the expression $\Phi(c) = \Phi(e_1 \cdots e_n) =   \Phi(e_1) \cdots \Phi(e_{n-1})\Phi(e_n)$, any term arising of the form $e_{j}' e_{j+1} $ is $0$ (since $r(e'_{j}) \neq s(e_{j+1})$ for any $1\leq j<n-1$).
   Thus the only situation in which $\Phi(c) \neq c$ is when $v$ is an infinite emitter. In such a case, again because $e_j'e_{j+1}=0
   $ for any $1\leq j<n-1$, we have $\Phi(c) =e_1 \cdots e_{n-1}(e_n+e'_n)$. 

   3.  If $v$ is not an infinite emitter in $E_{T(v)}$ the result is clear.  If $v$ is an infinite emitter in $E_{T(v)}$ then (using statement 1) 
   $$\Phi((c-v)^i e_1e_1^\ast) = (e_1 \cdots e_{n-1}(e_n+e'_n) - (v + v'))^i  v = (c-v)^i,$$
   because $e'_nv = v'v = 0$.  
\end{proof}

Invoking \cite[Theorem 6.4]{AMT19} (which requires that $F$ be a finite graph and that $c$ be a maximal cycle in $F$),  we have 
\begin{proposition}\label{PruferoverF}  With $F$ defined as above, the injective envelope of the simple left $L_K(F)$-module $V^F_{c}$ 
is the Pr\"ufer $L_K(F)$-module $U^F_c$.
\end{proposition}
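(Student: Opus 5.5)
The plan is to obtain Proposition~\ref{PruferoverF} as a direct application of \cite[Theorem 6.4]{AMT19}. That result states that for a finite graph $F$ and a maximal cycle $c$ in $F$, the Pr\"ufer module $U^F_c$ is injective and is the injective envelope of $V^F_c$. So the work consists of checking its two hypotheses for the graph $F = E_{T(v)}/(H,\emptyset)$ constructed above, and checking that the objects named in the statement agree with those of \cite{AMT19}.

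\textbf{Finiteness of $F$.} By the explicit description, $F^0$ is $c^0$ together with at most one primed vertex $v_i'$ for each $v_i\in c^0$ that is an infinite emitter in $E_{T(v)}$. Likewise $F^1$ is $\{e_1,\dots,e_n\}$ together with at most one primed edge $e_i'$ for each $i$. Hence $|F^0|\le 2n$ and $|F^1|\le 2n$, so $F$ is finite.

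\textbf{Maximality of $c$ in $F$.} I will show that $c$ is a source cycle in $F$, which implies maximality, as noted earlier in this section. The edges of $F$ are the $e_i$, which form $c$, and the $e_i'$. Each $e_i'$ has range a primed vertex $v_{i+1}'$. Primed vertices are sinks in $F$, since nothing in $F^1$ has a primed source. Consequently:
\begin{itemize}
\item no edge outside $c$ has range in $c^0$, so $c$ is a source cycle;
\item the only cycle in $F$ is $c$ together with its cyclic permutations, since a cycle cannot pass through a sink.
\end{itemize}
In particular there is no cycle $d\neq c$ in $F$ with a path from $d$ to $c$, so $c$ is maximal.

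\textbf{Matching the objects.} The module $V^F_c$ here is the Chen module of the cycle $c$ in $F$. The module $U^F_c = \varinjlim_i L_K(F)v/L_K(F)(c-v)^i$ is exactly the Pr\"ufer module $M_c$ of \cite{AMT19}, by Definition~\ref{Pruferdef} with $p(x)=x-1$. So \cite[Theorem 6.4]{AMT19} yields the claim.

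\textbf{Main obstacle.} The only delicate point is bookkeeping: one must confirm that the paper's description of $F$, obtained through \cite[Theorems 1.5.18, 2.4.12, 2.4.15]{AAM}, has no further vertices or edges. In general the breaking vertices $B_H$ lie in $E_{T(v)}^0\setminus H = c^0$. I would verify this via Lemma~\ref{HisheredsatinT(V)}: $H$ is hereditary, and every vertex of $T(v)$ outside $H$ lies on $c$. This confirms that $F^0$ and $F^1$ are exactly as listed, and the argument above then goes through.
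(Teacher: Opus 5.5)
Your proposal is correct and follows the paper's own argument: the paper obtains Proposition~\ref{PruferoverF} by invoking \cite[Theorem 6.4]{AMT19}, after observing that $F$ is finite and that $c$ is a source cycle (hence a maximal cycle) in $F$. One small correction to your bookkeeping paragraph: the reason the only edges of $F$ with range in $c^0$ are $e_1,\dots,e_n$ is the exclusivity of $c$, not the tautology $T(v)\setminus H=c^0$, since any chord or parallel edge among the $v_i$ would close up a second cycle based at a vertex of $c$.
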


(Since any source cycle is maximal, \cite[Theorem 6.4]{AMT19} indeed applies to the cycle $c$ within $F$.)   We emphasize that \cite[Theorem 6.4]{AMT19} is a fairly deep and intricate result;  thus Proposition \ref{PruferoverF} should  be viewed as highly nontrivial.

Through the ring homomorphism $\Phi:L_K(E_{T(v)})\to L_K(F)$, every left $L_K(F)$-module $M$ becomes a left $L_K(E_{T(v)})$-module by setting:
\[
x\cdot m:=\Phi(x)\cdot m\quad \forall m\in M,\forall x\in L_K(E_{T(v)}).
\]
So combining Proposition \ref{PruferoverF} with   \Cref{lemma:injectivelocalunits} we have

\begin{corollary}\label{alsoasLKET(V)modules} The injective envelope of the simple left $L_K(E_{T(v)})$-module $V^F_{c}$ is the left  $L_K(E_{T(v)})$-module $U^F_c$. 
\end{corollary}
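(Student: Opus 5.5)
The plan is to apply \Cref{lemma:injectivelocalunits} to the surjective ring homomorphism $\Phi:L_K(E_{T(v)})\to L_K(F)$. Its kernel is the two-sided ideal $I(H)$ of $S:=L_K(E_{T(v)})$, and $S/I(H)\cong L_K(F)$. The key steps, in order, are these.

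\textbf{Step 1 (local units in the kernel).} I will check that $I(H)$, viewed as a ring, has a set of local units. Here $H$ is hereditary in $E_{T(v)}^0$ by Lemma \ref{HisheredsatinT(V)}. By Lemma \ref{lem:restrictedversushedgehog}(2), applied to the graph $E_{T(v)}$, the ideal $I(H)$ is isomorphic to $L_K({}_H(E_{T(v)}))$. It therefore has local units (the finite sums of vertices of the hedgehog graph). If $H=\emptyset$, then $I(H)=0$ and the assertion is trivial.

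\textbf{Step 2 (injectivity transfers).} By Proposition \ref{PruferoverF}, $U^F_c$ is an injective left $L_K(F)$-module. Transported along the isomorphism $S/I(H)\cong L_K(F)$, it is an injective left $S/I(H)$-module. Moreover, the $S$-module structure obtained through $\Phi$ is exactly the one obtained through the quotient map. The first part of \Cref{lemma:injectivelocalunits} then gives that $U^F_c$ is injective as a left $S$-module.

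\textbf{Step 3 (essentiality).} By Proposition \ref{PruferoverF}, $V^F_c$ is a simple essential $L_K(F)$-submodule of $U^F_c$. Since $\Phi$ is surjective, the $S$-submodules of any $L_K(F)$-module $M$ coincide with its $L_K(F)$-submodules. Hence $V^F_c$ stays simple as an $S$-module and stays essential in $U^F_c$. We must also check that these modules are unitary over $S$. For $m\in M$ there is a finite sum of vertices $u$ of $F$ with $um=m$, and $u$ is the image under $\Phi$ of a finite sum of vertices in $c^0$ (using $\Phi(v_i)=v_i+v_i'$ where needed). So $SM=M$. Injective plus essential extension of $V^F_c$ yields that $U^F_c$ is the injective envelope; this is the second part of \Cref{lemma:injectivelocalunits}.

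The only genuinely nontrivial point is Step 1, namely the local units for $I(H)$. The remark following Proposition \ref{prop:newMorita} shows that this hypothesis can fail for general idempotent-generated ideals. Here it is supplied by the hedgehog isomorphism of Lemma \ref{lem:restrictedversushedgehog}(2). The unitarity check in Step 3 is routine.
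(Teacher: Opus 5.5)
Your proposal is correct and is the same argument as the paper's, which simply combines Proposition~\ref{PruferoverF} with Lemma~\ref{lemma:injectivelocalunits}, applied to $\Phi\colon L_K(E_{T(v)})\to L_K(F)$ with kernel $I(H)$. Your Step~1 (local units of $I(H)$ via Lemma~\ref{lem:restrictedversushedgehog}(2)) makes explicit a hypothesis the paper leaves tacit, and your Step~3 just unpacks the second half of Lemma~\ref{lemma:injectivelocalunits}.
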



We now show that the left $L_K(E_{T(v)})$-modules  $V^F_{c}$ and $U^F_c$ are isomorphic to 
$V^{E_{T(v)}}_{c}$ and $U^{E_{T(v)}}_c$, respectively. By Corollary \ref{alsoasLKET(V)modules}, this will yield that $U^{E_{T(v)}}_c$ is the injective envelope of $V^{E_{T(v)}}_{c}$ in $L_K(E_{T(v)})\Mod$.  We start by establishing

\begin{lemma}\label{lemma:I(H)U=0}
    Let $I(H)$ denote the two-sided ideal of $L_K(E_{T(v)})$ generated by $H$.  Then $xu=0$ for every $x\in I(H)$ and $u \in U^{E_{T(v)}}_c$. 
\end{lemma}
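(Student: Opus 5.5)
We have to show $I(H)U=0$, where $U:=U^{E_{T(v)}}_c$ and $I(H)$ is the ideal of $L_K(E_{T(v)})$ generated by $H=T(v)\setminus c^0$.

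First reduce to vertices. Every element of $I(H)$ is a finite sum of terms $a\,h\,b$ with $a,b\in L_K(E_{T(v)})$ and $h\in H$. Hence it suffices to prove $hU=0$ for all $h\in H$: then $a h b\,u = a\,(h\,(bu))=0$, since $bu\in U$. Indeed, $I(H)U=0$ if and only if $HU=0$, because $U$ is a submodule.

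Next describe $U$. Work in the graph $E_{T(v)}$, where $c$ is still exclusive and $H$ is hereditary by Lemma \ref{HisheredsatinT(V)}. Proposition \ref{prop:elementsU} (with $p(x)=x-1$, applied to this graph) shows that every element of $U$ is a finite $K$-linear combination of elements
\[
\xi\, p^*_{v,r(\xi)}\,\alpha^{E_{T(v)}}_{c,i},\qquad \xi\in F^+_{E_{T(v)}}(c^0),\ i\geq 1.
\]
Here $F^+(c^0)=c^0\sqcup\{\alpha\in F(T(c^0)) : r(\alpha)\in c^0\}$. Since every vertex of $E_{T(v)}$ lies in $T(v)=T(c^0)$, the set $F(T(c^0))$ of paths starting outside $T(c^0)$ is empty. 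So $\xi$ ranges only over the vertices $v_1,\dots,v_n$ of $c$, and the generators reduce to $p^*_{v,v_x}\alpha_{c,i}$.

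Now compute $h\,p^*_{v,v_x}\alpha_{c,i}$ for $h\in H$. We have $h\,p^*_{v,v_x}=h\,r(p_{v,v_x})\,p^*_{v,v_x}=h\,v_x\,p^*_{v,v_x}$. Because $v_x\in c^0$ and $h\notin c^0$, orthogonality of vertices gives $h v_x=0$. Therefore $hU=0$, and by the reduction in the first step, $I(H)U=0$.

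The only delicate point is whether Proposition \ref{prop:elementsU} applies verbatim to the graph $E_{T(v)}$ rather than to $E$, and that $F^+$ collapses there. If one prefers not to rely on the collapse, the same argument still works for a general $\xi\in F^+$. In that case $h\xi$ is nonzero only when $h=s(\xi)$. But every path in $F^+(c^0)$ of positive length ends in $c^0$, so its source cannot lie in the hereditary set $H$. Hence $s(\xi)\notin H$, so $h\xi=0$ in all cases.
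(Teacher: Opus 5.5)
Your proof is correct, but it is organized differently from the paper's. The paper decomposes on the ideal side. It invokes \cite[Lemma 2.4.1]{AAM} to write each element of $I(H)$ as $\sum k_j\gamma_j\lambda_j^*$ with $r(\gamma_j)=r(\lambda_j)\in H$, and reduces to the case $s(\lambda_j)=v$. It then locates the first edge of $\lambda_j$ whose range lies in $H$; this edge is necessarily an exit of $c$. Finally it kills $\lambda_j^*\alpha^{E_{T(v)}}_{c,i}$ directly with Lemma \ref{rem:contialpha}(4),(5).

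You instead decompose on the module side. You use the elementary fact that a two-sided ideal generated by idempotents annihilates a module as soon as its generators do. You then apply the normal form of Proposition \ref{prop:elementsU} to $E_{T(v)}$, where $c$ remains exclusive. The key observation is that no $\xi\in F^+(c^0)$ has source in the hereditary set $H$. Your collapse $F^+_{E_{T(v)}}(c^0)=c^0$ is also right, since $T(c^0)$ is all of $E_{T(v)}^0$. So $U^{E_{T(v)}}_c$ is spanned by the elements $p^*_{v,v_x}\alpha^{E_{T(v)}}_{c,i}$.

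Your route is shorter and avoids the description of $I(H)$ from \cite{AAM} entirely. It also makes transparent that $U^{E_{T(v)}}_c=\sum_x v_xU^{E_{T(v)}}_c$, so every vertex off $c^0$ acts as zero. The cost is that the real work has moved into Proposition \ref{prop:elementsU}. Its proof contains essentially the same exit argument via Lemma \ref{rem:contialpha}(4),(5) that the paper uses here. It also needs an exclusivity-based analysis of the real part of monomials, which the paper's direct argument does without.
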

\begin{proof}
    By \cite[Lemma 2.4.1]{AAM}, every element of $I(H)$ is of the form $\sum_{j=1}^n k_j\gamma_j\lambda_j^*$ with $n\geq 1$, $\gamma_j,\lambda_j\in\Path(E_{T(v)})$,  $r(\gamma_j)=r(\lambda_j)\in H$ for $1\leq j\leq n$. Any element of $U^{E_{T(v)}}_c$ belongs to $L_K(E_{T(v)})\alpha^{E_{T(v)}}_{c,i}$ for a suitable $i\geq 1$. Since $I(H)L_K(E_{T(v)})\alpha^{E_{T(v)}}_{c,i}=I(H)\alpha^{E_{T(v)}}_{c,i}$, we have to prove that 
    \[
    \sum_{j=1}^n k_j\gamma_j\lambda_j^*\alpha^{E_{T(v)}}_{c,i}=0\quad\forall i\geq 1.
    \]
     By \Cref{rem:contialpha}(2), we can assume $s(\lambda_j)=r(\lambda_j^*)=v$ for $1\leq j\leq n$. Moreover, note that $\lambda_j \neq v$, since $v\notin H$;  i.e., each $\lambda_j$ has length $\geq 1$. 
     
     
     Fix $1\leq j\leq n$; we have  $\lambda_{j}=\lambda_{j1}\cdots \lambda_{j\ell_{j}}$ for suitable $ \lambda_{j1},\dots, \lambda_{j\ell_{j}}\in E_{T(v)}^1$. Set 
    \[\hat i=\min\{h:r(\lambda_{jh})\in H\}.\]
    Then $\lambda_{j\hat i}$ is an exit for $c$. By \Cref{rem:contialpha}(4,5), there exists $m\geq 0$ such that
    \[
    \lambda_{j\hat i}^*\cdots \lambda_{j1}^*\alpha^{E_{T(v)}}_{c,i}=\lambda_{j\hat i}^*p^*_{v s(\lambda_{j\hat i})}(c^*)^m\alpha^{E_{T(v)}}_{c,i}=0.
    \]
    As  $j$ was arbitrary, we conclude $ \sum_{j=1}^n k_j\gamma_j\lambda_j^*\alpha^{E_{T(v)}}_{c,i}=0$.
\end{proof}

\begin{proposition}\label{prop:isoETvF}
    For each $i\geq 1$,   
    $$L_K(E_{T(v)})\alpha_{c,i}^{E_{T(v)}} \cong  L_K(E_{T(v)})\alpha_{c,i}^F$$
    as  left $L_K(E_{T(v)})$-modules.
    In particular, 
  $$V_{c}^F \cong L_K(E_{T(v)})\alpha_{c,1}^F \cong L_K(E_{T(v)})\alpha_{c,1}^{E_{T(v)}} \cong V_{c}^{E_{T(v)}}$$
  as left $L_K(E_{T(v)})$-modules.  
\end{proposition}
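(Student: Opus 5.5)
Both cyclic modules are quotients of the projective $L_K(E_{T(v)})v$, so the plan is to show that the two surjections $L_K(E_{T(v)})v\to L_K(E_{T(v)})\alpha_{c,i}^{E_{T(v)}}$, $x\mapsto x\alpha_{c,i}^{E_{T(v)}}$, and $L_K(E_{T(v)})v\to L_K(E_{T(v)})\alpha_{c,i}^F$, $x\mapsto \Phi(x)\alpha_{c,i}^F$, have the same kernel. The first map is well defined by \Cref{rem:contialpha}(2). For the second, note that $\Phi(v)\alpha^F_{c,i}=\alpha^F_{c,i}$: if $v$ is an infinite emitter in $E_{T(v)}$, then $\Phi(v)=v+v'$ and $v'\alpha_{c,i}^F=0$ by \Cref{rem:contialpha}(2) applied in $F$.

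The kernel of the first map is $L_K(E_{T(v)})(c-v)^i$, by the definition of $\alpha^{E_{T(v)}}_{c,i}$. The first containment for the second map is immediate. Since $\Phi(c-v)=(c-v)\Phi(e_1e_1^*)$-type identities hold, \Cref{lemma:Phi(e_ie_i^*)}(3) together with $e_1e_1^*\alpha=\alpha$ (\Cref{rem:contialpha}(7)) gives the cleanest route: write $(c-v)^i=(c-v)^ie_1e_1^*+(c-v)^i(v-e_1e_1^*)$. The first summand is sent by $\Phi$ to $(c-v)^i$, which kills $\alpha^F_{c,i}$. The second summand has $\Phi(v-e_1e_1^*)=0$ by \Cref{lemma:Phi(e_ie_i^*)}(1). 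Hence $L_K(E_{T(v)})(c-v)^i\subseteq\ker$, and we obtain a surjection
\[
\theta_i: L_K(E_{T(v)})\alpha_{c,i}^{E_{T(v)}}\to L_K(E_{T(v)})\alpha_{c,i}^F,\qquad x\alpha_{c,i}^{E_{T(v)}}\mapsto \Phi(x)\alpha^F_{c,i}.
\]

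The main obstacle is injectivity, which I will establish by induction on $i$ using uniseriality-type information. For $i=1$, the source $L_K(E_{T(v)})\alpha_{c,1}^{E_{T(v)}}\cong L_K(E_{T(v)})v/L_K(E_{T(v)})(c-v)\cong V_c^{E_{T(v)}}$ is simple (Chen), and the target is nonzero, since $\alpha_{c,1}^F\neq0$. So $\theta_1$ is an isomorphism. For $i\ge2$, I will compare the chains. On the $E_{T(v)}$ side there is an exact sequence
\[
0\to L_K(E_{T(v)})\alpha_{c,i-1}\to L_K(E_{T(v)})\alpha_{c,i}\xrightarrow{(c-v)\cdot}L_K(E_{T(v)})\alpha_{c,1}\to0 .
\]
Indeed, the first map is the inclusion, since $\alpha_{c,i-1}=(c-v)\alpha_{c,i}$. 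For exactness we use that $L_K(E_{T(v)})v/L_K(E_{T(v)})(c-v)^i$ has a filtration with successive quotients $L_K(E_{T(v)})(c-v)^{j}/L_K(E_{T(v)})(c-v)^{j+1}\cong V_c$, via right multiplication by $(c-v)^j$; this is injective on $L_K(E_{T(v)})v$ because $c-v$ is a non-zero-divisor from the right, as the grading argument in $L_K$ shows. The same holds in $F$.

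The map $\theta_i$ restricts to $\theta_{i-1}$ on $L_K(E_{T(v)})\alpha_{c,i-1}^{E_{T(v)}}$, because $\theta_i((c-v)\alpha^{E_{T(v)}}_{c,i})=\Phi(c-v)\alpha^F_{c,i}=\alpha^F_{c,i-1}$, using the $e_1e_1^*$ trick again. Hence $\ker\theta_i\cap L_K(E_{T(v)})\alpha_{c,i-1}^{E_{T(v)}}=0$ by induction. Since $L_K(E_{T(v)})\alpha_{c,i-1}^{E_{T(v)}}$ contains the socle $L_K(E_{T(v)})\alpha_{c,1}^{E_{T(v)}}$, it suffices to know that this socle is essential, so that $\ker\theta_i=0$. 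Essentiality is checked directly: any $0\neq x\alpha_{c,i}$ with $x\in L_K(E_{T(v)})v\setminus L_K(E_{T(v)})(c-v)^i$ can be multiplied by a suitable power of $c-v$ (after reducing $x$ modulo the kernel via \Cref{prop:elementsU}) to land nonzero in $L_K(E_{T(v)})\alpha_{c,1}$. Alternatively, if essentiality is awkward, I will conclude by a length count, since both modules have a composition series of length $i$ with factors $\cong V_c$ and a surjection between finite-length modules of equal length is an isomorphism. This is the route I would actually use.

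The ``in particular'' statement is the case $i=1$ combined with $V_c^F\cong L_K(F)\alpha^F_{c,1}=L_K(E_{T(v)})\alpha^F_{c,1}$, the equality holding because $\Phi$ is surjective.
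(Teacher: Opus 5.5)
Your proof is correct, but the injectivity step uses a different mechanism from the paper's. The paper argues elementwise. From $\Phi(\lambda)\alpha^F_{c,i}=0$ with $\lambda=\lambda v$, it writes $\Phi(\lambda)v=\xi(c-v)^i$ and lifts $\xi=\Phi(\mu)$. Using $\Phi(e_1e_1^*)=v$ and $\Phi((c-v)^ie_1e_1^*)=(c-v)^i$, it places $(\lambda-\mu(c-v)^i)e_1e_1^*$ in $\Ker\Phi=I(H)$. It then kills this element with Lemma~\ref{lemma:I(H)U=0} (that $I(H)$ annihilates $U^{E_{T(v)}}_c$) together with $e_1e_1^*\alpha^{E_{T(v)}}_{c,i}=\alpha^{E_{T(v)}}_{c,i}$.

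You instead count composition length. Both $L_K(E_{T(v)})v/L_K(E_{T(v)})(c-v)^i$ and $L_K(F)v/L_K(F)(c-v)^i$ have length $i$: right multiplication by $c-v$ is injective on $L_K(E_{T(v)})v$ and on $L_K(F)v$ (lowest-degree-component argument), and the Chen modules are simple. Since $\Phi$ is onto, the $L_K(E_{T(v)})$-submodules of $L_K(F)\alpha^F_{c,i}$ are exactly its $L_K(F)$-submodules. So your surjection $\theta_i$ between modules of equal finite length is an isomorphism.

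Your route uses only surjectivity of $\Phi$, never the identification $\Ker\Phi=I(H)$, and makes Lemma~\ref{lemma:I(H)U=0} unnecessary here. The paper's route needs no Jordan--H\"older or grading input and identifies the two annihilators directly. Because your $\theta_i$ restrict to $\theta_{i-1}$, they feed into Corollary~\ref{PruferisenvelopeoverT(v)} just as the paper's maps do. You can drop the induction/essential-socle sketch entirely.

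Two slips need fixing.

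\begin{itemize}
\item When $v\in{\rm Inf}(E_{T(v)})$ we have $\Phi(v)=v+v'$ while $\Phi(e_1e_1^*)=v$. So $\Phi(v-e_1e_1^*)=v'\neq 0$, not $0$ as you claim. The conclusion still holds, most cleanly as $\Phi((c-v)^i)\alpha^F_{c,i}=\Phi((c-v)^i)\Phi(e_1e_1^*)\alpha^F_{c,i}=(c-v)^i\alpha^F_{c,i}=0$, using $v\alpha^F_{c,i}=\alpha^F_{c,i}$ in $F$ and Lemma~\ref{lemma:Phi(e_ie_i^*)}(3).
\item Left multiplication by $c-v$ is not a module map. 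The quotient map in your short exact sequence should be $x\alpha_{c,i}\mapsto x\alpha_{c,1}$.
\end{itemize}

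Neither slip affects the length count.
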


\begin{proof}
    Let $\lambda\in L_K(E_{T(v)})$. Then $\lambda=\lambda_1+\lambda_2$ where
    $\lambda_1=\lambda_1 v$, and $\lambda_2$ is a sum of monomials whose range is not $v$ (i.e., $\lambda_2v=0$). Assume $\lambda \alpha_{c,i}^{E_{T(v)}}=0$;  we  prove that also $\lambda \cdot \alpha_{c,i}^F:=\Phi(\lambda)\alpha_{c,i}^F=0$, where $\Phi: L_K(E_{T(v)}) \to L_K(F)$ is the ring homomorphism defined above. By Lemma~\ref{rem:contialpha}
    \[0=(\lambda_1+\lambda_2)\alpha_{c,i}^{E_{T(v)}}=\lambda_1\alpha_{c,i}^{E_{T(v)}}=\lambda_1\big(v+L_K(E_{T(v)})(c-v)^i\big).\]
    Therefore $\lambda_1 \in L_K(E_{T(v)})(c-v)^i$;  write
    $\lambda_1 = \mu_1(c-v)^i$ for some $\mu_1 \in L_K(E_{T(v)}).$

    By Lemma \ref{lemma:Phi(e_ie_i^*)}   we then have 
    \begin{align*}
        \Phi(\lambda_1)&=\Phi(\mu_1(c-v)^i)=\Phi(\mu_1)\Phi((c-v)^i)\\
        &=\begin{cases}
            \Phi(\mu_1)(c-v)^i&\text{ if $v\in \text{Reg}(E_{T(v)})$};\\
            \Phi(\mu_1)(e_1\cdots e_{n-1}(e_n+e_n')-(v+v'))^i&\text{ if $v\in \text{Inf}(E_{T(v)})$.} 
        \end{cases}
    \end{align*}
In the first case we have $\Phi(\lambda_1) \alpha_{c,i}^F = 0$ because $(c-v)^i \alpha_{c,i}^F = 0$.\\
For the second case: 
\begin{align*}
\Phi(\lambda_1) \alpha_{c,i}^F&=   \Phi(\mu_1) (e_1\cdots e_{n-1}(e_n+e_n')-(v+v'))^i \cdot \alpha_{c,i}^F\\
&=\Phi(\mu_1)(e_1\cdots e_{n-1}(e_n+e_n')-(v+v'))^{i-1} (c - v) \cdot \alpha_{c,i}^F\\
&\text{\ \ \ \ \ \ \ \ \ \ (because $e_n'\cdot \alpha_{c,i}^F=0=v' \cdot \alpha_{c,i}^F$) }\\
&=\Phi(\mu_1)(e_1\cdots e_{n-1}(e_n+e_n')-(v+v'))^{i-1} \cdot \alpha_{c,i-1}^F.
\end{align*}
Continuing in this way we get that 

\smallskip

$\Phi(\lambda_1) \alpha_{c,i}^F  =  \Phi(\mu_1)(e_1\cdots e_{n-1}(e_n+e_n')-(v+v'))^{1} \cdot \alpha_{c,1}^F  $

\qquad \qquad $=  \Phi(\mu_1)(c-v)\alpha_{c,1}^F  =   0 .$

\smallskip

So we conclude in either case that
 $\Phi(\lambda_1)\alpha_{c,i}^F=0$.\\
    Next, since $\Phi(e_1e_1^*)=v$ (see \Cref{lemma:Phi(e_ie_i^*)}) and $\lambda_2v=0$,
    \[\Phi(\lambda_2)\alpha_{c,i}^F=
    \Phi(\lambda_2)v\alpha_{c,i}^F=\Phi(\lambda_2e_1e_1^*)\alpha_{c,i}^F = \Phi(\lambda_2ve_1e_1^\ast)\alpha_{c,i}^F = \Phi(0)\alpha_{c,i}^F=0.
    \]
The above discussion shows that  setting $x \alpha_{c,i}^{E_{T(v)}}\mapsto x \cdot \alpha_{c,i}^F :=\Phi(x)\alpha_{c,i}^F $, for each $x\in L_K(E_{T(v)})$, gives a well-defined  
$L_K(E_{T(v)})$-homomorphism 
    $$\phi: \ L_K(E_{T(v)})\alpha_{c,i}^{E_{T(v)}} \ \to \ L_K(E_{T(v)})\alpha_{c,i}^F$$
    of $L_K(E_{T(v)})$-modules. 
    Clearly $\phi$ is surjective. 
    Let us prove that $\phi$ is injective. Suppose that for $\lambda=\lambda v\in L_K(E_{T(v)})$ we have 
    \[0=\phi(\lambda\alpha_{c,i}^{E_{T(v)}})=\lambda \cdot \alpha_{c,i}^F := \Phi(\lambda)\alpha_{c,i}^F=\Phi(\lambda)(v+L_K(F)(c-v)^i).
    \]
    Then  $\Phi(\lambda)v\in L_K(F)(c-v)^i$; write $\Phi(\lambda)v = \xi (c-v)^i$ for a suitable $\xi\in L_K(F)$.  Now  by \Cref{lemma:Phi(e_ie_i^*)}
    \begin{align*}
        \Phi(\lambda)v&=\Phi(\lambda)\Phi(e_1e_1^*)=\Phi(\lambda e_1e_1^*),
    \end{align*}
    so that $ \Phi(\lambda e_1e_1^*) = \xi (c-v)^i$.  Since $\Phi((c-v)^ie_1e_1^*)=(c-v)^i$ (Lemma \ref{lemma:Phi(e_ie_i^*)}), setting $\xi=\Phi(\mu)$ for a suitable $\mu\in L_K(E_{T(v)})$ we get
    \[\Phi(\lambda e_1e_1^*-\mu (c-v)^ie_1e_1^*)=0\quad\text{and hence}\]
    $(\lambda -\mu (c-v)^i)e_1e_1^*$ belongs to $\Ker\Phi=I(H)$.
Therefore, by \Cref{lemma:I(H)U=0} and \Cref{lemma:nuovo c1c1*}(7) we get
\[0=(\lambda-\mu(c-v)^i)e_1e_1^*\alpha_{c,i}^{E_{T(v)}}=(\lambda-\mu(c-v)^i)\alpha_{c,i}^{E_{T(v)}}=\lambda \alpha_{c,i}^{E_{T(v)}}. \qedhere \]
    
\end{proof}

\begin{corollary}\label{PruferisenvelopeoverT(v)}
    The Pr\"ufer left $L_K(E_{T(v)})$-module $U^{E_{
T(v)}}_c$ is the injective envelope of the simple left $L_K(E_{T(v)})$-module $V^{E_{T(v)}}_{c}$.
\end{corollary}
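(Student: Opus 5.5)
The plan is to transport the statement of Corollary~\ref{alsoasLKET(V)modules} along the isomorphisms of Proposition~\ref{prop:isoETvF}. By Corollary~\ref{alsoasLKET(V)modules}, $U^F_c$, viewed as an $L_K(E_{T(v)})$-module through $\Phi$, is the injective envelope of $V^F_c$. So it suffices to produce an isomorphism of left $L_K(E_{T(v)})$-modules $U^{E_{T(v)}}_c\cong U^F_c$ which restricts to the isomorphism $V^{E_{T(v)}}_c\cong V^F_c$ of Proposition~\ref{prop:isoETvF}. Injectivity is invariant under isomorphism, and so is essentiality of the image of the simple module. Hence $U^{E_{T(v)}}_c$ is then injective and essential over $V^{E_{T(v)}}_c$, i.e.\ it is its injective envelope.

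To build the global isomorphism, first note that $U^{E_{T(v)}}_c=\bigcup_i L_K(E_{T(v)})\alpha^{E_{T(v)}}_{c,i}$ is an ascending union. This holds because $\alpha_{c,i-1}=(c-v)\alpha_{c,i}$ by Lemma~\ref{rem:contialpha}(1). The same holds for $U^F_c$ with the $\alpha^F_{c,i}$. Here one must check that the $L_K(E_{T(v)})$-submodule generated by $\alpha^F_{c,i}$ coincides with the $L_K(F)$-submodule: the image of $\Phi$ is all of $L_K(F)$ since $\Phi$ is the composite of a surjection and an isomorphism. Let
\[
\phi_i:L_K(E_{T(v)})\alpha^{E_{T(v)}}_{c,i}\to L_K(E_{T(v)})\alpha^F_{c,i},\qquad x\alpha^{E_{T(v)}}_{c,i}\mapsto \Phi(x)\alpha^F_{c,i},
\]
be the isomorphisms from the proof of Proposition~\ref{prop:isoETvF}. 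We then verify compatibility, namely that $\phi_i$ restricted to $L_K(E_{T(v)})\alpha^{E_{T(v)}}_{c,i-1}$ equals $\phi_{i-1}$. Since $\alpha^{E_{T(v)}}_{c,i-1}=(c-v)\alpha^{E_{T(v)}}_{c,i}$, we have
\[
\phi_i\big(x\alpha^{E_{T(v)}}_{c,i-1}\big)=\Phi(x)\Phi(c-v)\alpha^F_{c,i}.
\]
By Lemma~\ref{lemma:Phi(e_ie_i^*)}(2), $\Phi(c-v)$ equals $c-v$, or $e_1\cdots e_{n-1}(e_n+e_n')-(v+v')$. In the latter case it acts on $\alpha^F_{c,i}$ as $c-v$, since $e_n'\alpha^F_{c,i}=0=v'\alpha^F_{c,i}$, exactly as in the proof of Proposition~\ref{prop:isoETvF}. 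Hence the right-hand side is $\Phi(x)\alpha^F_{c,i-1}=\phi_{i-1}(x\alpha^{E_{T(v)}}_{c,i-1})$.

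The compatible family $\{\phi_i\}$ therefore glues to a map $\phi:U^{E_{T(v)}}_c\to U^F_c$ on the union. This map is injective and surjective because each $\phi_i$ is. At $i=1$ it restricts to the isomorphism between the simple submodules, which completes the proof. The main obstacle is this compatibility check, specifically the infinite-emitter case where $\Phi(c)\neq c$. It is handled by the annihilation of $\alpha^F_{c,i}$ by $e_n'$ and $v'$.
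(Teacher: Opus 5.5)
Your proposal is correct and follows the paper's proof: glue the isomorphisms $\phi_i$ of Proposition~\ref{prop:isoETvF} into an isomorphism $U^{E_{T(v)}}_c\cong U^F_c$, then transport Corollary~\ref{alsoasLKET(V)modules} along it, using the $i=1$ case to match the simple submodules. You go further than the paper in two places where it leaves things implicit. First, you check that the $\phi_i$ are compatible with the maps $\alpha_{c,i-1}=(c-v)\alpha_{c,i}$, including the infinite-emitter case, where $e_n'$ and $v'$ kill $\alpha^F_{c,i}$. Second, you use the surjectivity of $\Phi$ to see that the $L_K(E_{T(v)})\alpha^F_{c,i}$ exhaust $U^F_c$.
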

\begin{proof}
From  \Cref{prop:isoETvF} we get that for any $i\geq 1$ there exists an isomorphism $\phi_i:L_K(E_{T(v)})\alpha_{c,i}^{E_{T(v)}}\to L_K(E_{T(v)})\alpha_{c,i}^F$. Denote by $\psi_i$ the composition of $\phi_i$ and the inclusion of left $L_K(E_{T(v)})$-modules $L_K(E_{T(v)})\alpha_{c,i}^F\hookrightarrow U^F_c$. Then 
$$\varinjlim_i \psi_i:U^{E_{
T(v)}}_c\to U^F_c$$ is an isomorphism.
Since by Corollary \ref{alsoasLKET(V)modules}  $U^F_c$ is the injective envelope of the simple left $L_K(E_{T(v)})$-module $V^F_{c}\cong L_K(E_{T(v)})\alpha_{c,1}^F$, which in turn by the second statement of  Proposition \ref{prop:isoETvF}  is isomorphic to $V^{E_{T(v)}}_{c}\cong L_K(E_{T(v)})\alpha_{c,1}^{E_{T(v)}}$, the result follows.
\end{proof}

The upshot of Corollary \ref{PruferisenvelopeoverT(v)} is that we have explicitly constructed the injective envelope of the simple module $V^{E_{T(v)}}_{c}$, viewed as a module over $L_K(E_{T(v)})$.  We will now use this description to identify the injective envelope of $V^{E}_{c}$ when viewed as a module over the two-sided ideal of $L_K(E)$ generated by $v$.


By \cite[Proposition 2.2.22]{AAM} we have an embedding of $K$-algebras
\[L_K(E_{T(v)})\hookrightarrow L_K(E).\]
Indeed, by Lemma \ref{lem:restrictedversushedgehog}, $$L_K(E_{T(v)}) = \Sig (T(v)) L_K(E) \Sig (T(v)).$$

\begin{lemma}\label{lemmakillalpha}
    For each $i\geq 1$ and $\tau\in L_K(E_{T(v)})$, we have
    \[
    \tau\alpha_{c,i}^{E_{T(v)}}=0\text{ if and only if }\tau\alpha_{c,i}^E=0.
    \]
\end{lemma}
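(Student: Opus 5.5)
The plan is to reduce both vanishing conditions to membership of $\tau v$ in a left ideal generated by $(c-v)^i$, and then compare the two left ideals. Throughout I use the identification $L_K(E_{T(v)})=\Sig(T(v))L_K(E)\Sig(T(v))$ from Lemma~\ref{lem:restrictedversushedgehog}(1).

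\emph{Step 1 (translate to annihilators).} By Lemma~\ref{rem:contialpha}(2), we have $\tau\alpha^{E}_{c,i}=\tau v\,\alpha^{E}_{c,i}$, and similarly over $E_{T(v)}$. The left annihilator of $\alpha^E_{c,i}$ in $L_K(E)v$ is $L_K(E)(c-v)^i$, and the left annihilator of $\alpha^{E_{T(v)}}_{c,i}$ in $L_K(E_{T(v)})v$ is $L_K(E_{T(v)})(c-v)^i$; this is the remark following Definition~\ref{Pruferdef}. Since $\tau v$ lies in both $L_K(E_{T(v)})v$ and $L_K(E)v$, the statement becomes:
\[
\tau v\in L_K(E_{T(v)})(c-v)^i \iff \tau v\in L_K(E)(c-v)^i .
\]

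\emph{Step 2 (the easy direction).} The forward implication is immediate, because $L_K(E_{T(v)})\subseteq L_K(E)$ and so $L_K(E_{T(v)})(c-v)^i\subseteq L_K(E)(c-v)^i$.

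\emph{Step 3 (the converse, via a local unit).} Suppose $\tau v=x(c-v)^i$ with $x\in L_K(E)$. Since $\tau\in\Sig(T(v))L_K(E)\Sig(T(v))$, there is a $w\in\Sig(T(v))$ with $w\tau=\tau$. Also $(c-v)^i=v(c-v)^i$, because $c=vc$. Hence
\[
\tau v = w\tau v = w x (c-v)^i = (w x v)(c-v)^i .
\]
Here $wxv\in\Sig(T(v))L_K(E)\Sig(T(v))=L_K(E_{T(v)})$, since $v\in T(v)$. Therefore $\tau v\in L_K(E_{T(v)})(c-v)^i$, which gives the converse.

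\emph{Main obstacle.} The only point needing care is Step 1: one must check that the annihilator descriptions are really the same ideal $(c-v)^i$ in both algebras. In particular, $c-v$ has to be read as the same element under the embedding $L_K(E_{T(v)})\hookrightarrow L_K(E)$. This holds because the embedding is the identity on vertices and edges of $T(v)$, and these include all of $c$.
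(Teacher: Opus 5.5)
Your proposal is correct and follows essentially the same route as the paper. Both arguments reduce the two vanishing conditions to membership in $L_K(E)(c-v)^i$ and $L_K(E_{T(v)})(c-v)^i$ via the annihilator of $\alpha_{c,i}$, and then use a local unit $w\in\Sig(T(v))$ together with $L_K(E_{T(v)})=\Sig(T(v))L_K(E)\Sig(T(v))$. Your explicit factor $wxv$ simply makes precise the paper's step ``$\tau\in\Sig(T(v))L_K(E)\Sig(T(v))(c-v)^i$''.
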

\begin{proof}
We may assume $\tau v = \tau$.    Let $\rho\in\Sig( T(v))$ such that $\rho\tau=\tau$. We have
\[\tau \alpha_{c,i}^E=\tau (v+L_K(E)(c-v)^i)=0  \ \  \Longleftrightarrow \ \  
\tau  \in L_K(E)(c-v)^i.\] 
Since $\tau = \rho \tau$,
this happens if and only if $\tau$ belongs to $\Sig ( T(v))L_K(E)\Sig ( T(v))(c-v)^i$, which by Lemma \ref{lem:restrictedversushedgehog}(1) equals $L_K(E_{T(v)})(c-v)^i.$
The latter happens if and only if $\tau\alpha_{c,i}^{E_{T(v)}}=0$.
\end{proof}

Consider the two-sided ideal $$I(v) = I(T(v))$$
generated by $v$ in the $K$-algebra $L_K(E)$.  Then $I(v)$ is a ring with local units.   So by  \Cref{prop:newMorita} the $K$-algebras \[ I(v)=L_K(E)\Sig ( T(v))L_K(E) \text{ and }L_K(E_{T(v)})=\Sig ( T(v)) L_K(E)\Sig ( T(v))\] are Morita equivalent. The functors realizing the Morita equivalence are
\[I(v)\Mod\overset{F}{\underset{G}{\rightleftarrows}} L_K(E_{T(v)})\Mod\]
where 
\[F:=\displaystyle\varinjlim_{\rho\in \Sig ( T(v))}\rho I(v)\otimes_{I(v)} \ \underline{\ \ \ \  }  \ \text{ and } \ G:=\displaystyle\varinjlim_{\rho\in \Sig ( T(v))}I(v)\rho\otimes_{\rho I(v)\rho} \ \underline{\ \ \ \  }.\]

\begin{remark}\label{VandUareI(v)modules}
We note that since $v\alpha_{c,i}^E = \alpha_{c,i}^E$ for all $i\geq 1$, and since $\{\alpha_{c,i}^E \ | \ i\in \mathbb{N}\}$ is an $L_K(E)$-generating set for $U_c^E$, we get that both $V_c^E$ and $U_c^E$ are indeed $I(v)$-modules.
\end{remark}

\begin{proposition}\label{isoUEcGUEc}
Denote by $\iota^E_i$ (resp. $\iota^{E_{T(v)}}_i$) the embedding
\[
L_K(E)\alpha^E_{c,i}\hookrightarrow L_K(E)\alpha^E_{c,i+1},\ \alpha^E_{c,i}\mapsto (c-v)\alpha^E_{c,i+1}
\]
\[
\Big(\text{resp. }L_K(E_{T(v)})\alpha^{E_{T(v)}}_{c,i}\hookrightarrow L_K(E_{T(v)})\alpha^{E_{T(v)}}_{c,i+1},\ \alpha^{E_{T(v)}}_{c,i}\mapsto (c-v)\alpha^{E_{T(v)}}_{c,i+1}\Big).
\]
There exist isomorphisms of $I(v)$-modules
\[\varphi_i:I(v)\alpha^E_{c,i}\to G(L_K(E_{T(v)})\alpha^{E_{T(v)}}_{c,i})\]
such that $\varphi_{i+1}\circ \iota^E_i= G(\iota^{E_{T(v)}}_i)\circ\varphi_i$.
In particular we have  isomorphisms of left $I(v)$-modules
\[V^E_{c} \cong G(V^{E_{T(v)}}_{c})\quad\text{and}\quad U^E_c \cong  G(U^{E_{T(v)}}_c).\]
\end{proposition}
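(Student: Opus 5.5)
Write $N_i:=L_K(E_{T(v)})\alpha^{E_{T(v)}}_{c,i}$ and $M_i:=I(v)\alpha^E_{c,i}$. Note that $M_i=L_K(E)\alpha^E_{c,i}$: indeed $\alpha^E_{c,i}=v\alpha^E_{c,i}$ and $L_K(E)v\subseteq I(v)$. We compute $G(N_i)$ concretely. Since $v\in\Sig(T(v))$ and $\alpha^{E_{T(v)}}_{c,i}=v\alpha^{E_{T(v)}}_{c,i}$, for every $\rho\geq v$ in $\Sig(T(v))$ we have $\rho N_i=\rho L_K(E_{T(v)})v\,\alpha^{E_{T(v)}}_{c,i}$. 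Hence $G(N_i)$ is generated as an $I(v)$-module by the single element $v\otimes\alpha^{E_{T(v)}}_{c,i}$; more precisely, by the image of $I(v)v\otimes_{vI(v)v}v\alpha^{E_{T(v)}}_{c,i}$ in the direct limit. We then define
\[
\theta_i: I(v)v\to G(N_i),\qquad x\mapsto x\otimes\alpha^{E_{T(v)}}_{c,i},
\]
which is a surjective $I(v)$-homomorphism. The goal is to show that $\ker\theta_i$ equals the annihilator of $\alpha^E_{c,i}$ in $I(v)v$, namely $I(v)v\cap L_K(E)(c-v)^i=I(v)(c-v)^i$. Then $\varphi_i(x\alpha^E_{c,i}):=\theta_i(x)$ is a well-defined isomorphism.

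\textbf{Key step: computing $\ker\theta_i$.} This is done via the Morita equivalence. Because $F$ and $G$ are inverse equivalences and $F(M)=\varinjlim\rho M$, an element $y\in G(N_i)$ is zero exactly when $\rho'y=0$ for all $\rho'\in\Sig(T(v))$. We have $\rho'\theta_i(x)=(\rho' x v)\alpha^{E_{T(v)}}_{c,i}$ under the identification $FG(N_i)\cong N_i$, and here $\rho'xv\in \Sig(T(v))L_K(E)\Sig(T(v))=L_K(E_{T(v)})$ by Lemma \ref{lem:restrictedversushedgehog}(1). So $\theta_i(x)=0$ if and only if $\rho'xv\,\alpha^{E_{T(v)}}_{c,i}=0$ for all $\rho'$. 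By Lemma \ref{lemmakillalpha} this is equivalent to $\rho'x\,\alpha^E_{c,i}=0$ for all $\rho'$. Finally, $I(v)$ acts unitarily on $U^E_c$ with local units, and $\rho' x\alpha^E_{c,i}=0$ for all $\rho'$ forces $x\alpha^E_{c,i}=0$. To see this, note that $x\alpha^E_{c,i}$ lies in a module over $I(v)$ whose $F$-image detects nonzero elements: an element $m$ with $\Sig(T(v))m=0$ satisfies $I(v)m=L_K(E)\Sig(T(v))L_K(E)m=0$, hence $m=0$ by unitarity. This yields the isomorphism $\varphi_i$.

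\textbf{Compatibility and conclusion.} The relation $\varphi_{i+1}\iota^E_i=G(\iota^{E_{T(v)}}_i)\varphi_i$ is then checked on the generator $\alpha^E_{c,i}$. The left-hand side gives $(c-v)\otimes\alpha^{E_{T(v)}}_{c,i+1}$, and the right-hand side gives $v\otimes(c-v)\alpha^{E_{T(v)}}_{c,i+1}$. These agree because $c-v\in vI(v)v$ can be moved across the tensor product. For $i=1$ we get $V^E_c\cong M_1\cong G(N_1)\cong G(V^{E_{T(v)}}_c)$. Since $G$ is an equivalence, it commutes with direct limits, so taking $\varinjlim_i$ of the compatible system of isomorphisms gives $U^E_c\cong G(U^{E_{T(v)}}_c)$.

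\textbf{Expected main obstacle.} The delicate point is the identification of elements of the direct-limit tensor functor $G$. One must justify that the map $G(N)\to$ (modules over $I(v)$) is detected by multiplication by the idempotents $\rho'$, i.e.\ that $FG\cong\mathrm{id}$ sends $\rho'(x\otimes n)$ to $\rho'xv\,n$. I would handle this by using the explicit counit of the Morita equivalence coming from \cite[Theorem 4.2]{A83}, together with the fact that $I(v)$ has local units, so that all the modules involved are unitary.
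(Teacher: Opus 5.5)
\textbf{Gap in the key step.} Your reduction to computing $\ker\theta_i$ is fine, but the kernel computation rests on two assertions that are false. The first is that $y\in G(N_i)$ vanishes as soon as $\rho'y=0$ for all $\rho'\in\Sig(T(v))$. The second is that $\Sig(T(v))m=0$ forces $I(v)m=L_K(E)\Sig(T(v))L_K(E)m=0$. The second silently replaces $\Sig(T(v))L_K(E)m=0$ by the much weaker $\Sig(T(v))m=0$.

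For a counterexample, let $E$ have vertices $u,v$, one edge $d$ from $u$ to $v$, and a loop $c$ at $v$; here $c$ is exclusive. Then $T(v)=\{v\}$, and $u=dd^*\in I(v)$, so $I(v)=L_K(E)$. The element $x=d\in I(v)v$ satisfies $vd=0$. Hence $v\,\theta_i(d)=\theta_i(vd)=0$ and $v\,d\,\alpha^E_{c,i}=0$. Yet $d\alpha^E_{c,i}\neq 0$, since $d^*d\alpha^E_{c,i}=\alpha^E_{c,i}$. Also $\theta_i(d)\neq 0$, since $d^*\theta_i(d)=v\otimes\alpha^{E_{T(v)}}_{c,i}$, whose image under $vG(N_i)\cong vN_i$ is $\alpha^{E_{T(v)}}_{c,i}\neq 0$.

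So the first and last equivalences in your chain both fail in the direction you need. Even the easy inclusion is routed through the false step. The conclusion $\ker\theta_i=I(v)(c-v)^i$ is true only because the two errors cancel; as written, it is not proved.

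\textbf{How to repair it.} Test against $\rho'r$ with $r\in I(v)$, not against $\rho'$ alone. If $\theta_i(x)=0$, then $\theta_i(rx)=r\theta_i(x)=0$ for all $r\in I(v)$. Your (correct) identification $\rho'G(N_i)\cong\rho'N_i$ then gives $(\rho'rxv)\alpha^{E_{T(v)}}_{c,i}=0$, where $\rho'rxv\in L_K(E_{T(v)})$. By Lemma~\ref{lemmakillalpha}, $\rho'rx\alpha^E_{c,i}=0$. Hence $\Sig(T(v))I(v)\,x\alpha^E_{c,i}=0$, so $I(v)x\alpha^E_{c,i}=I(v)\Sig(T(v))I(v)x\alpha^E_{c,i}=0$. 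Finally $x\alpha^E_{c,i}=0$, because $x=ex$ for some idempotent $e\in I(v)$.

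The reverse inclusion should be done directly: write $x=x'(c-v)^i$ with $x'=x'v\in I(v)$, so $\theta_i(x)=x'\otimes(c-v)^i\alpha^{E_{T(v)}}_{c,i}=0$.

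With this repair, your argument is an abstract version of the paper's. The paper writes $\lambda v=\sum_j k_j\gamma_j\delta_j^*v$ and splits $\gamma_j=\gamma_j'\gamma_j''$ with $\gamma_j'\in F^+_E(T(v))$. It then multiplies by $(\gamma'_\ell)^*$, which is exactly the $r$ you are missing, to land in $\rho L_K(E)\rho$. It moves the result across the tensor sign and reassembles via $\gamma'_\ell(\gamma'_\ell)^*$. Your compatibility check and your passage to the direct limit are fine.
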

\begin{proof}
  For any subset $S$ of $\Sig(T(v))$,  let  
\[ \Sig(T(v))^{\geq S} \ \mbox{ denote } \{\rho \in \Sig(T(v)) \ | \ \rho \geq s \ \forall s\in S\}. \]    
Assume $\lambda \alpha^E_{c,i}=0$ with $\lambda=\lambda v\in I(v)$; then $\lambda=\lambda'(c-v)^i$ for some $\lambda'=\lambda'v\in I(v)$. Then for each $ \rho\in\Sig(T(v))^{\geq v},$
\begin{align*}\lambda \otimes_{\rho L_K(E)\rho} \alpha^{E_{T(v)}}_{c,i}&=\lambda'(c-v)^i\otimes_{\rho L_K(E)\rho} \alpha^{E_{T(v)}}_{c,i}\\
&=\lambda'\otimes_{\rho L_K(E)\rho}(c-v)^i\alpha^{E_{T(v)}}_{c,i}=0.
\end{align*}
Therefore, for each $\rho \in \Sig(T(v))^{\geq v}$,  setting $\alpha^E_{c,i}\mapsto v\otimes_{\rho L_K(E)\rho}\alpha^{E_{T(v)}}_{c,i}$ we define homomorphisms 
\[\varphi_{i,\rho}:I(v)\alpha^E_{c,i}\to I(v)\rho \otimes_{\rho L_K(E)\rho} \rho L_K(E_{T(v)})\alpha^{E_{T(v)}}_{c,i}\]
\[  \mbox{ via }  \ \ \lambda \alpha^E_{c,i} =\lambda v \alpha^E_{c,i} \ \mapsto \ 
\lambda v \otimes_{\rho L_K(E)\rho}\alpha^{E_{T(v)}}_{c,i}.\quad 
\]
The homomorphisms $\varphi_{i,\rho}$ are surjective because
\[I(v)\rho \otimes_{\rho L_K(E)\rho} \rho L_K(E_{T(v)})\alpha^{E_{T(v)}}_{c,i}=I(v)v\otimes_{\rho L_K(E)\rho} \alpha^{E_{T(v)}}_{c,i}.\]
Therefore \begin{align*}\varphi_i:=\varinjlim_{\rho \in \Sig(T(v))^{\geq v}} \varphi_{i,\rho}&: I(v)\alpha^E_{c,i}\to 
\varinjlim_{\rho \in \Sig(T(v))^{\geq v}}I(v)\rho\otimes_{\rho L_K(E)\rho}\rho L_K(E_{T(v)})\alpha^{E_{T(v)}}_{c,i}\\
&\mbox{i.e.,} \ \ \varphi_i: \ I(v)\alpha^E_{c,i}\to  G(L_K(E_{T(v)})\alpha^{E_{T(v)}}_{c,i})\end{align*}
is an epimorphism.\\

We now show that $\varphi_i$ is a monomorphism. If $\varphi_i(\lambda \alpha^E_{c,i})=0$ with $\lambda\in I(v)$, then there exists $ \overline\rho\in \Sig (T(v))^{\geq v}$ such that 
\[0=\varphi_{i,\rho}(\lambda \alpha^E_{c,i})=\lambda v\otimes_{\rho L_K(E)\rho} \alpha^{E_{T(v)}}_{c,i}\quad\forall \rho \in (\Sig(T(v)))^{\geq \overline{\rho}}. \]
By \cite[Lemma 2.4.1]{AAM}, 
\[\lambda v=\sum_{j=1}^m k_j\gamma_j\delta_j^*v \]
where  $k_j \in K, \gamma_j,\delta_j\in\Path(E),$ and $r(\gamma_j)=r(\delta_j)\in T(v).$
Since $r(\gamma_j)\in T(v)$ for all $1\leq j \leq m$, we have 
\[\gamma_j=\gamma_j'\gamma_j''\quad \text{with } \gamma_j'\in F^+_E(T(v)) \mbox{ and }  \gamma_j''\in {\rm Path}(E_{T(v)}). \]

 Let $\mathcal{L}$ denote a subset of $\{1,2, \dots , m\}$ for which $\{\gamma'_\ell \ | \ \ell \in \mathcal{L}\}$ represents exactly one copy of each of the elements of the set  $\{\gamma'_j \ | \ 1 
\leq j \leq m\}$.    \ \ Now fix $\ell \in \mathcal{L}$.  

By Lemma \ref{lemma:mu*lambda}, for each $1\leq j \leq m$,  
\[
\gamma_\ell'(\gamma_\ell')^*\gamma_j'=\begin{cases}
 \gamma_\ell'     & \text{if }\gamma_j'=\gamma_\ell', \\
    0  & \text{otherwise}.
\end{cases}
\]
So multiplying the displayed expression for $\lambda v$ by $\gamma_\ell'(\gamma_\ell')^\ast$ we get 
\[
\gamma_\ell'(\gamma_\ell')^*\lambda v=\sum_{j: \gamma_j'=\gamma_\ell'} k_j \gamma_\ell'\gamma_{j}''\delta_{j}^* v.
\]
We will show that  $\sum_{j: \gamma_j'=\gamma_\ell'} k_j \gamma_\ell'\gamma_{j}''\delta_{j}^* v \alpha^E_{c,i}=0.$
Since
\[ \lambda \alpha_{c,i}^E  \ =   \   \sum_{j=1}^m k_j\gamma_j\delta_j^*v \alpha_{c,i}^E \ = \ \sum_{\ell \in \mathcal{L}}  ( \sum_{j: \gamma_j'=\gamma_\ell'} k_j \gamma_\ell'\gamma_{j}''\delta_{j}^* v \alpha^E_{c,i}), \] 
the desired result will follow.

 Let $\rho \geq \overline{\rho}$; so  $0=\varphi_{i,\rho}(\lambda \alpha^E_{c,i})=\lambda v\otimes_{\rho L_K(E)\rho} \alpha^{E_{T(v)}}_{c,i}  $.
Now  multiplying  by $(\gamma_\ell')^*$ and using the previous display we get
\begin{align*}
0&=(\gamma_\ell')^*\varphi_{i,\rho}(\lambda \alpha^E_{c,i})=(\gamma_\ell')^*\lambda v\otimes_{\rho L_K(E)\rho} \alpha^{E_{T(v)}}_{c,i}\\
&=\sum_{j: \gamma_j'=\gamma_\ell'} k_j r(\gamma_\ell')\gamma_{j}''\delta_{j}^* v\otimes_{\rho L_K(E)\rho}\alpha^{E_{T(v)}}_{c,i}.
\end{align*}

Let $\mathcal{S}$ denote the set $\{\overline{\rho} \} \cup \{r(\gamma_\ell') \ | \ \ell \in \mathcal{L}\}$.   Then for any $\rho \in \Sig(T(v))^{\geq \mathcal{S}}$ we have $
 k_j r(\gamma_\ell')\gamma_{j}''\delta_{j}^* v \in \rho L_K(E) \rho,$ so that 
\[0=\sum_{j: \gamma_j'=\gamma_\ell'} k_j r(\gamma_\ell')\gamma_{j}''\delta_{j}^* v\otimes_{\rho L_K(E)\rho}\alpha^{E_{T(v)}}_{c,i}=\rho\otimes_{\rho L_K(E)\rho}\sum_{j: \gamma_j'=\gamma_\ell'} k_j r(\gamma_\ell')\gamma_{j}''\delta_{j}^* v \alpha^{E_{T(v)}}_{c,i},\]
and hence $\sum_{j: \gamma_j'=\gamma_\ell'} k_j r(\gamma_\ell')\gamma_{j}''\delta_{j}^* v \alpha^{E_{T(v)}}_{c,i}=0$.
Therefore, \[\sum_{j: \gamma_j'=\gamma_\ell'} k_j r(\gamma_\ell')\gamma_{j}''\delta_{j}^* v \in L_K(E_{T(v)})(c-v)^i \ \ \text{ for each } \ell \in \mathcal{L},\]
and so 
\[ \sum_{j: \gamma_j'=\gamma_\ell'} k_j r(\gamma_\ell')\gamma_{j}''\delta_{j}^* v \alpha^E_{c,i} \ = \ 0 \]
as $L_K(E_{T(v)}) \subseteq L_K(E)$.  So  multiplying by $\gamma_\ell'$ we get 
\[   
    0 = \gamma_\ell'\sum_{j: \gamma_j'=\gamma_\ell'} k_j r(\gamma_\ell')\gamma_{j}''\delta_{j}^* v \alpha^E_{c,i}
    =\sum_{j: \gamma_j'=\gamma_\ell'} k_j \gamma_\ell'\gamma_{j}''\delta_{j}^* v \alpha^E_{c,i},
\]
which, as indicated above, yields the desired conclusion that $\varphi_i$ is a monomorphism, and thus we have shown that each $\varphi_i$ is an isomorphism. 
 In particular,
$$ \varphi_1: I(v)\alpha_{c,1}^E \to G(L_K(E_{T(v)})\alpha^{E_{T(v)}}_{c,1}) $$
is an isomorphism, so that 
$$I(v)\alpha_{c,1}^E = V^E_{c} \cong G(V^{E_{T(v)}}_{c}).$$
(The equality in the display  follows as $I(v)\alpha_{c,1}^E $ is a nonzero $L_K(E)$-submodule of the simple $L_K(E)$-module $V^E_{c}$.)

Because the isomorphisms $\{ \varphi_i$  |   $i\geq 1\}$ are explicitly described, it is straightforward to check that $\varphi_{i+1}\circ \iota^E_i= G(\iota^{E_{T(v)}}_i)\circ\varphi_i$. Thus taking the limit we conclude that $U^E_c$ is isomorphic to $G(U_c^{E_{T(v)}})$ as left $I(v)$-modules.
\end{proof}

We are now in position to establish the First Piece.  

\begin{corollary}\label{cor;Pruferiniettivo}
    Let $E$ be any graph, and $c$ an exclusive cycle in $E$.  Then the Pr\"ufer $I(v)$-module $U^E_c$ is the injective envelope in $I(v)\Mod$ of the simple $I(v)$-module $V^E_{c}$.
\end{corollary}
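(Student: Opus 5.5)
The plan is to transport the result of Corollary \ref{PruferisenvelopeoverT(v)} across the Morita equivalence between $I(v)$ and $L_K(E_{T(v)})$. By Lemma \ref{lem:restrictedversushedgehog}(2) the ideal $I(v)=I(T(v))$ has local units. Then Proposition \ref{prop:newMorita} (with $H=T(v)$) gives the Morita equivalence $I(v)\Mod\rightleftarrows L_K(E_{T(v)})\Mod$, realized by the functors $F$ and $G$ displayed before Remark \ref{VandUareI(v)modules}. Corollary \ref{PruferisenvelopeoverT(v)} says that $V^{E_{T(v)}}_c\hookrightarrow U^{E_{T(v)}}_c$ is an injective envelope in $L_K(E_{T(v)})\Mod$. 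Applying $G$ will give that $G(V^{E_{T(v)}}_c)\hookrightarrow G(U^{E_{T(v)}}_c)$ is an injective envelope in $I(v)\Mod$. The final step is to identify these modules with $V^E_c$ and $U^E_c$ via Proposition \ref{isoUEcGUEc}.

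The key steps, in order, are as follows.

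(i) Record that an equivalence of categories preserves the relevant notions. Monomorphisms, simple objects, injective objects and essential monomorphisms are all defined categorically in module categories over rings with local units. Injectivity is the lifting property against monomorphisms. Essentiality says that a mono $f$ is essential iff every $g$ with $gf$ mono is itself mono. So $G$ sends an injective envelope to an injective envelope.

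(ii) Make the embedding compatible with the identifications. Proposition \ref{isoUEcGUEc} supplies isomorphisms $\varphi_i$ commuting with the transition maps $\iota_i$. Hence the induced isomorphism $U^E_c\cong G(U^{E_{T(v)}}_c)$ restricts on $V^E_c=I(v)\alpha^E_{c,1}$ to $\varphi_1$, and the inclusion $V^E_c\subseteq U^E_c$ corresponds to $G$ applied to the inclusion $V^{E_{T(v)}}_c\subseteq U^{E_{T(v)}}_c$. Here one should note that $G$, being a direct limit of tensor functors, commutes with the direct limit defining the Prüfer module. This is the only point needing care, and it is essentially already built into the statement of Proposition \ref{isoUEcGUEc}.

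(iii) Conclude from (i) and (ii) that $U^E_c$ is injective in $I(v)\Mod$ and that $V^E_c$ is essential in it. By Remark \ref{VandUareI(v)modules} both are genuinely $I(v)$-modules, so $U^E_c$ is the injective envelope of $V^E_c$ in $I(v)\Mod$.

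The main obstacle, such as it is, is the bookkeeping in (ii). The isomorphisms of Proposition \ref{isoUEcGUEc} must assemble into a commutative square with the two inclusions, rather than merely giving abstract isomorphisms of the two modules separately. I would handle this by checking directly that the compatibility $\varphi_{i+1}\circ\iota^E_i=G(\iota^{E_{T(v)}}_i)\circ\varphi_i$ passes to the colimit. After that, everything else is formal Morita theory.
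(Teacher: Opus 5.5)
Your proposal is correct and matches the paper's proof. The paper simply notes that the Morita equivalence $G$ preserves injectives, and combines Corollary~\ref{PruferisenvelopeoverT(v)} with the isomorphisms $V^E_c\cong G(V^{E_{T(v)}}_c)$ and $U^E_c\cong G(U^{E_{T(v)}}_c)$ of Proposition~\ref{isoUEcGUEc}. Your extra care in (i)--(ii) about essential monomorphisms and about the inclusions corresponding is a valid refinement of what the paper leaves implicit. It could also be bypassed: $U^E_c\cong G(U^{E_{T(v)}}_c)$ has a simple essential socle, which must then equal the simple submodule $V^E_c$.
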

\begin{proof}
Any Morita equivalence transforms injective modules to injective modules. Since by \Cref{isoUEcGUEc} $U^E_c\cong G(U_c^{E_{T(v)}})$, and $V^E_{c}\cong G(V^{E_{T(v)}}_{c})$ in $I(v)\Mod$, the statement follows.
\end{proof}

\subsection{The Second Piece, in which we define the $L_K(E)$-module $X_c^E$ that will be shown to be the injective envelope in $L_K(E)$-Mod  of $V_{c}^E$}

We denote by $\mathbb N$ the set of natural numbers $\{ 1,2,3, \dots\}$.  

Consider the subset $c^0$ of $E^0$ consisting of the vertices $v=v_1, v_2, \dots, v_n$ of the exclusive cycle $c$.   

The map $F^+_E(c^0)\times \mathbb N\to U^E_c$, $(\mu,i)\mapsto \mu p^*_{v,r(\mu)}\alpha^E_{c,i}$ is  injective (see Remark \ref{rem:description} below). With that in mind, following what we observed at the end of \Cref{section:associativering} regarding formal series,  we give the following

\begin{definition}\label{Mcdef}
We denote by $$K[[F^+_E(c^0)[\mathbb N ]]]$$ the $K$-vector space of all functions $\mathfrak p$ from $F^+_E(c^0)\times \mathbb N $ to $K$ that assume for each $\mu\in F^+_E(c^0)$ the value 0 for almost all $i\in \mathbb N $. We represent any element $\mathfrak p\in K[[F^+_E(c^0)[\mathbb N ]]]$ as a ``\emph{formal series}''
\[\mathfrak p=\sum_{(\mu,i)\in F^+_E(c^0)\times \mathbb N }k_{\mu, i} \mu p^*_{v,r(\mu)}\alpha^E_{c,i},\]
where $k_{\mu, i}=\mathfrak p(\mu,i)$ and $\alpha^E_{c,i}$ are the generators of the Pr\"ufer left $L_K(E)$-module $U^E_c$. 
\end{definition}

\begin{remark}\label{rem:description}
Observe that each pair $(\mu,i)\in F^+_E(c^0)\times \mathbb N $ uniquely determines the element $\mu p^*_{v,r(\mu)}\alpha^E_{c,i}\in U^E_c$. Indeed, if \[\mu_1 p^*_{v,r(\mu_1)}\alpha^E_{c,i_1}=\mu_2 p^*_{v,r(\mu_2)}\alpha^E_{c,i_2},\] then, multiplying on the left by $p_{v,r(\mu_1)}\mu_1^*$, by \Cref{lemma:mu*lambda,lemma:nuovo c1c1*} we get $\mu_1=\mu_2$, which subsequently gives 
$v\alpha^E_{c,i_1}=v\alpha^E_{c,i_2}$, which implies $i_1=i_2.$

 Thus, it is not inappropriate to use $\mu p^*_{v,r(\mu)}\alpha^E_{c,i}$ as a placeholder for the scalar $k_{\mu,i}$. This notation allows us to naturally view the Pr\"ufer left $L_K(E)$-module $U^E_c$ as a $K$-subspace of the $K$-vector space $K[[F^+_E(c^0)[\mathbb N ]]]$ (see \Cref{prop:elementsU}).
  \end{remark}

\begin{definition}\label{def;KF+0}
    We denote by $$K[[F^+_E(c^0)[\mathbb N ]]]^{sf}$$ the $K$-vector subspace of $K[[F^+_E(c^0)[\mathbb N ]]]$ consisting of all \emph{source finite} functions 
 from $F^+_E(c^0)\times \mathbb N \to K$, i.e. functions such that 
 $$S(\mathfrak p):=\{s(\mu):\mathfrak p(\mu,j)\not=0 \ \mbox{for some }  j\in\mathbb N \} \mbox{ is finite}.$$ Any element $\mathfrak p$ in
 $K[[F^+_E(c^0)[\mathbb N ]]]^{sf}$ is  represented by a \emph{source finite}  formal series.
Denote by  $s(\mathfrak p)$ the idempotent
\[s(\mathfrak p):=\sum_{u\in S(\mathfrak p)}u\]
of $L_K(E)$. 
\end{definition}

Again invoking Proposition \ref{prop:U^pdescrizione}, the Pr\"ufer left $L_K(E)$-module $U^E_c$ is also a $K$-subspace of the $K$-vector space
$K[[F^+_E(c^0)[\mathbb N ]]]^{sf}$. Easily,  the set $\{ u\in E^0 \ | \ u \geq v\}$ is finite if and only if   $K[[F^+_E(c^0)[\mathbb N ]]]=K[[F^+_E(c^0)[\mathbb N ]]]^{sf}$.  In particular, this equality holds in case $E$ is a finite graph.  


\begin{remark}\label{injenvelopeisPruferformaxcycleinfinitegraph}
Suppose $c$ is a maximal cycle in a finite graph $E$.   Then $F^+_E(c^0)$ is finite, and so (yet again invoking Proposition \ref{prop:U^pdescrizione}) we have that  $K[[F^+_E(c^0)[\mathbb N ]]]^{sf}$ is precisely $U_c^E$ in this case.  
\end{remark}

\begin{example}
Consider the graph
 \[\xymatrix{ \Gamma:=&u\ar@(ul,dl)[]^e\ar[rr]^{d}&& v\ar@(ul,ur)[]^c\ar[r]^b&w}.\]
 Since 
 $\Gamma$ is finite, we have 
 \[K[[F^+_\Gamma(c^0)[\mathbb N ]]]=K[[F^+_\Gamma(c^0)[\mathbb N ]]]^{sf}.\]
 The elements of $K[[F^+_\Gamma(c^0)[\mathbb N ]]]$ are of the form
 \[\sum_{(\mu,i)\in F^+_\Gamma(c^0)\times \mathbb N }k_{\mu, i} \mu p^*_{v,r(\mu)}\alpha^\Gamma_{c,i}=\sum_{i=1}^{n}k_{i}\alpha^\Gamma_{c,i}+
\sum_{j\geq 0} e^jd\sum_{i=1}^{m_j}h_{j,i}\alpha^\Gamma_{c,i}
 \]
 for suitable $k_{\mu, i}, k_i, h_{j,i}\in K$ and $n,m_j\in\mathbb N $. For example, 
the formal series
 \[\sum_{i\in\mathbb N }e^id\left(\sum_{j\leq i}\alpha^\Gamma_{c,j}\right)\]
  is an element of $K[[F^+_E(c^0)[\mathbb N ]]]$: indeed $e^id\in F^+_E(c^0)$ and for each $i$ the coefficient of $e^id\alpha^\Gamma_{c,j}$ is non zero for finitely many $j\in\mathbb N $, precisely for $1\leq j\leq i$.
\end{example}

\begin{proposition}\label{prop:formalisTmod}
The $K$-vector space $K[[F^+_E(c^0)[\mathbb N ]]]$ is a left $L_K(E)$-premodule, while $K[[F^+_E(c^0)[\mathbb N ]]]^{sf}$ is a left $L_K(E)$-module.
\end{proposition}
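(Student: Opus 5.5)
The plan is to use the universal property of Leavitt path algebras: $L_K(E)$ is the quotient of $K\widehat E$ by the Cuntz--Krieger relations. So it suffices to exhibit a Cuntz--Krieger $E$-family $\{P_u, S_e, S_{e^*}\}$ of $K$-linear endomorphisms of $X:=K[[F^+_E(c^0)[\mathbb N]]]$. That is, the $P_u$ should be pairwise orthogonal idempotents, and the families should satisfy (E1), (E2), (CK1), (CK2). This family then induces a $K$-algebra homomorphism $L_K(E)\to\End_K(X)$, which is the premodule structure. The operators are designed so that, on the $K$-subspace $U^E_c\subseteq X$ of finite sums (\Cref{rem:description}, \Cref{prop:elementsU}), they reproduce the given action of $L_K(E)$ on the Pr\"ufer module. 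Concretely, each operator is defined termwise on basis placeholders $\mu p^*_{v,r(\mu)}\alpha^E_{c,i}$ by the formula valid in $U^E_c$, and then extended to arbitrary formal series by summing coefficients.

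The operators are as follows. Set $P_u(\mathfrak p)=\sum_{s(\mu)=u}k_{\mu,i}\,\mu p^*_{v,r(\mu)}\alpha^E_{c,i}$. For an edge $e$, let $S_e$ send $\mu p^*\alpha_i$ to $e\mu p^*\alpha_i$ if $r(e)=s(\mu)$ and $s(\mu)\notin c^0$, since then $e\mu\in F^+_E(c^0)$. If instead $\mu=v_j\in c^0$, the term $e\,p^*_{v,v_j}\alpha_i$ is rewritten in $U^E_c$. When $e$ is not on $c$, it is simply $e\in F^+_E(c^0)$. When $e=e_{j-1}$ is on $c$, it is computed with \Cref{lemma:nuovo c1c1*}(6),(7): for $j\neq 1$ it gives $p^*_{v,v_{j-1}}\alpha_i$, and for $j=1$ it gives $p^*_{v,v_n}(\alpha_i+\alpha_{i-1})$. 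For $S_{e^*}$, send $\mu p^*\alpha_i$ to $\mu' p^*\alpha_i$ if $\mu=e\mu'$ with $\mu'$ nontrivial. If $\mu=e$, send it to $p^*_{v,r(e)}\alpha_i$. If $\mu=v_j$ and $e=e_j$, use \Cref{computewithpsubwcycle}(2) and \Cref{rem:contialpha}(4) to express $e_j^*p^*_{v,v_j}\alpha_i$ as a finite combination $\sum_{l\le i}p^*_{v,v_{j+1}}\alpha_l$. In all other cases, including exits of $c$ by \Cref{rem:contialpha}(5), the result is $0$.

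The key well-definedness check is local finiteness. For each target pair $(\nu,l)$, only finitely many source pairs $(\mu,i)$ contribute to its coefficient: for $S_e$ at most two values of $i$ per $\nu$, and for $S_{e^*}$ only $i\ge l$ with $\mu$ determined by $\nu$. Note, however, that $e_j^*$ applied to $p^*_{v,v_j}\alpha_i$ yields contributions to every $l\le i$. Hence the coefficient of $p^*_{v,v_{j+1}}\alpha_l$ collects $k_{v_j,i}$ for all $i\ge l$. This sum is finite only because each $\mu$ has almost all $k_{\mu,i}=0$, which is exactly the defining condition of $K[[F^+_E(c^0)[\mathbb N]]]$. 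The same condition guarantees that the image again has finite support in $i$ for each fixed $\mu$.

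The relations (E1), (E2), (CK1) and the orthogonality of the $P_u$ are verified coefficientwise. Each identity involves only finitely many placeholders per output coordinate, and it already holds on $U^E_c$, where the operators agree with the genuine module action. Since every coordinate of an identity is a finite linear expression in the input coefficients, agreement on finitely supported series propagates to all series.

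The main obstacle is (CK2) at a regular vertex $u$, namely $\sum_{s(e)=u}S_eS_{e^*}=P_u$. On a placeholder with $s(\mu)=u\notin c^0$ this is immediate. For $u=v_j\in c^0$ one must combine $S_{e_j}S_{e_j^*}$, which acts via \Cref{lemma:nuovo c1c1*}(7), with the exit terms, which vanish. Again it is cleanest to argue coordinatewise and reduce to the identity in $U^E_c$. I expect the delicate point to be showing that this coordinatewise reduction is legitimate when infinite tails in $i$ are summed; I would handle it by noting that each output coefficient depends on finitely many input coefficients, so truncating $\mathfrak p$ to a finite sum does not change that coefficient.

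Finally, for the module part we restrict to the source finite subspace $K[[F^+_E(c^0)[\mathbb N]]]^{sf}$. Each operator preserves source finiteness: $S_e$ and $S_{e^*}$ change sources only to $s(e)$, $r(e)$, or cycle vertices, so $S(S_e\mathfrak p)$ and $S(S_{e^*}\mathfrak p)$ stay finite. This gives a submodule of the premodule. It is unitary because $s(\mathfrak p)\mathfrak p=\mathfrak p$, using $\sum_{u\in S(\mathfrak p)}P_u=\mathrm{id}$ on $\mathfrak p$. For a general series in $K[[F^+_E(c^0)[\mathbb N]]]$ with infinitely many sources, no finite sum of vertices fixes it, which is why that space is only a premodule.
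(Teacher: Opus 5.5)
Your proposal is correct and follows the paper's route. You build an explicit Cuntz--Krieger $E$-family on $K[[F^+_E(c^0)[\mathbb N]]]$ acting termwise as multiplication by $u$, $e$, $e^*$ (your operators coincide with those in the Appendix), invoke the universal property of $L_K(E)$, and then use invariance of the source-finite subspace together with $s(\mathfrak p)\cdot\mathfrak p=\mathfrak p$ for unitarity. The paper checks (CK2) by direct coordinate computation and calls the other relations routine, whereas your reduction to $U^E_c$ handles all relations uniformly; phrase its finiteness as ``each output row $(\nu,\cdot)$ depends on finitely many input rows $\mu$'', not on finitely many coefficients (for instance $S_{e_n^*}$ at $(v,l)$ uses every $(v_n,j)$ with $j\ge l$), so that restricting $\mathfrak p$ to those rows gives a finite sum in $U^E_c$.
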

\begin{proof}
We claim that there is Cuntz-Krieger $E$-family in ${\rm End}_K(K[[F^+_E(c^0)[\mathbb N ]]])$, which then guarantees the existence of a ring homomorphism $\Phi: L_K(E) \to {\rm End}_K(K[[F^+_E(c^0)[\mathbb N ]]])$, which in turn (by standard ring theory) endows $K[[F^+_E(c^0)[\mathbb N ]]]$ with an $L_K(E)$-premodule structure by setting $$x \cdot m := \Phi(x)(m)$$
for $x\in L_K(E)$ and $m\in K[[F^+_E(c^0)[\mathbb N ]]]$.  We explicitly define the elements $ P_u, S_e, S_{e^*}$ ($u\in E^0, e\in E^1$) of 
${\rm End}_K(K[[F^+_E(c^0)[\mathbb N ]]])$, and demonstrate that this collection forms a  Cuntz-Krieger $E$-family, in the Appendix (Section \ref{sec:appendix}).  
As one can see from their explicit descriptions, the endomorphisms $ P_u, S_e, S_{e^*}$ ($u\in E^0, e\in E^1$) have the effect, respectively, of  multiplying by $u$, $e$, and $e^*$ each of the terms of the formal series $\mathfrak p=\sum_{(\mu,i)\in F^+_E(c^0)\times\mathbb N}k_{\mu,i} \mu p^*_{v,r(\mu)}\alpha^E_{c,i}$.   More formally,
$$P_u(\sum_{(\mu,i)\in F^+_E(c^0)\times\mathbb N}k_{\mu,i} \mu p^*_{v,r(\mu)}\alpha^E_{c,i}) = \sum_{(\mu,i)\in F^+_E(c^0)\times\mathbb N}k_{\mu,i} u\mu p^*_{v,r(\mu)}\alpha^E_{c,i} \ \ \forall \ u \in E^0,$$

$$S_e(\sum_{(\mu,i)\in F^+_E(c^0)\times\mathbb N}k_{\mu,i} \mu p^*_{v,r(\mu)}\alpha^E_{c,i}) = \sum_{(\mu,i)\in F^+_E(c^0)\times\mathbb N}k_{\mu,i} e\mu p^*_{v,r(\mu)}\alpha^E_{c,i} \ \ \forall \ e \in E^1, $$

$$S_{e^*}(\sum_{(\mu,i)\in F^+_E(c^0)\times\mathbb N}k_{\mu,i} \mu p^*_{v,r(\mu)}\alpha^E_{c,i}) = \sum_{(\mu,i)\in F^+_E(c^0)\times\mathbb N}k_{\mu,i} e^*\mu p^*_{v,r(\mu)}\alpha^E_{c,i} \ \ \forall \ e \in E^1.$$


\noindent
Less formally, the vector space of formal series $K[[F^+_E(c^0)[\mathbb N ]]]$ admits the analogous $L_K(E)$-module structure as its $K$-vector subspace $U_c^E$.     

We now consider the $K$-subspace $K[[F^+_E(c^0)[\mathbb N ]]]^{sf}$ of $K[[F^+_E(c^0)[\mathbb N ]]]$.  We claim that $K[[F^+_E(c^0)[\mathbb N ]]]^{sf}$ is closed under the premodule action of $L_K(E)$ on  $K[[F^+_E(c^0)[\mathbb N ]]]$ defined above, and that under this action $K[[F^+_E(c^0)[\mathbb N ]]]^{sf}$ is in fact unitary. 
Because $L_K(E)\cdot L_K(E) = L_K(E)$, to establish the claim it suffices to show that 
 $$L_K(E)\cdot K[[F^+_E(c^0)[\mathbb N ]]]=K[[F^+_E(c^0)[\mathbb N ]]]^{sf}.$$ Let $u\in E^0$ and  $\mathfrak p = \sum_{(\mu,i)\in F^+_E(c^0)\times \mathbb N }k_{\mu,i} \mu p^*_{v,r(\mu)} \alpha^E_{c,i}\in K[[F^+_E(c^0)[\mathbb N ]]]$.   Then using the action defined above we get
 $$ u \cdot \mathfrak p = 
  \sum_{(\mu,i)\in F^+_E(c^0)\times \mathbb N , \ s(\mu) = u}k_{\mu,i} \mu p^*_{v,r(\mu)} \alpha^E_{c,i}.$$
 \noindent 
This yields that  for each $u\in E^0$ and $\mathfrak p \in K[[F^+_E(c^0)[\mathbb N ]]]$ we have 
$$u \cdot \mathfrak p = 0 \ \ \Leftrightarrow \ \ u \notin S(\mathfrak p).$$
 Let $\mathfrak q \in K[[F^+_E(c^0)[\mathbb N ]]]^{sf}$, 
 and write   $\mathfrak q = \sum_{(\nu,i)\in F^+_E(c^0)\times\mathbb N }k_{\nu,i} \nu p^*_{v,r(\nu)}\alpha^E_{c,i}.$ 
  Since $\mathfrak q$ is source finite  the set $S(\mathfrak q):= \{s(\nu) : k_{\nu,i}\neq 0 \ \mbox{for some} \ i\in\mathbb N \}$ is finite; as defined above, we  let $s(\mathfrak q)$ denote the idempotent $\sum_{u\in S(\mathfrak q)}u$ of $L_K(E)$.   Then  it is easy to show that $s(\mathfrak q) \cdot \mathfrak q = \mathfrak q$, which yields \[L_K(E)\cdot K[[F^+_E(c^0)[\mathbb N ]]] \ \supseteq \ L_K(E)\cdot K[[F^+_E(c^0)[\mathbb N ]]]^{sf}\ \supseteq \ K[[F^+_E(c^0)[\mathbb N ]]]^{sf}.\]
 Conversely,  let $r\in L_K(E)$ and $\mathfrak p\in   K[[F^+_E(c^0)[\mathbb N ]]]$.
 Since the set of finite sums of distinct vertices of $E$ forms a set of local units for $L_K(E)$, there exist vertices $u_1, \dots , u_n$ in $E^0$ for which $(\sum_{i=1}^n u_i)  r = r$.   In particular if $u \in E_0 \setminus \{u_1, \dots , u_n\}$ then $ur=0$.   But then for any such $u$ we have $u\cdot (r\cdot \mathfrak p) = (ur) \cdot \mathfrak p = 0$.   Thus by the previous observation $u\notin S(r\cdot \mathfrak p)$, so that $S(r \cdot \mathfrak p) \subseteq \{u_1, \dots , u_n\}$ is finite, and thus $r\cdot \mathfrak p \in K[[F^+_E(c^0)[\mathbb N ]]]^{sf}.$
\end{proof}
\begin{remark}   Indeed it is not hard to show that $K[[F^+_E(c^0)[\mathbb N ]]]$ is an $L_K(E)$-module if and only if $K[[F^+_E(c^0)[\mathbb N ]]] = K[[F^+_E(c^0)[\mathbb N ]]]^{sf}$. 
\end{remark}

  For notational convenience, we denote by $X_c^E$ the left $L_K(E)$-module
$$X_c^E:= K[[F^+_E(c^0)[\mathbb N ]]]^{sf},$$
and we call $X_c^E$ the {\it extended Pr\"{u}fer left $L_K(E)$-module associated to $c$}.

\medskip

We are now in position to establish the Second Piece.  

\begin{lemma}\label{L(E)vessentialinMc}  The simple $L_K(E)$-module $V^E_{c}$ is an essential submodule of the extended Pr\"{u}fer $L_K(E)$-module $X_c^E$.
\end{lemma}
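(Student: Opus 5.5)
Since $V^E_c$ is simple, it is enough to show that every nonzero $\mathfrak p\in X_c^E$ satisfies $L_K(E)\mathfrak p\cap V^E_c\neq 0$. Here we identify $V^E_c$ with the submodule $L_K(E)\alpha^E_{c,1}\subseteq U^E_c\subseteq X_c^E$, which is legitimate by Remark \ref{rem:description} and Proposition \ref{prop:elementsU}. The plan has two moves. First, use a single term of $\mathfrak p$ to push it into the Pr\"ufer module $U^E_c$. Second, use the known structure of $U^E_c$, namely that its socle is $V^E_c$, to finish.

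\textbf{Step 1 (reduce to $U^E_c$).} Write
\[
\mathfrak p=\sum_{(\mu,i)}k_{\mu,i}\,\mu p^*_{v,r(\mu)}\alpha^E_{c,i}\neq 0,
\]
and pick $\mu_0\in F^+_E(c^0)$ with $k_{\mu_0,i}\neq 0$ for some $i$. Apply the element $p_{v,r(\mu_0)}\mu_0^*\in L_K(E)$. The action of $S_e,S_{e^*},P_u$ is termwise multiplication, so this element acts on each term separately. By Lemma \ref{lemma:mu*lambda}, $\mu_0^*\mu=0$ for $\mu\neq\mu_0$ in $F^+_E(c^0)$, so every term with $\mu\neq\mu_0$ is killed. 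For $\mu=\mu_0$ we get $\mu_0^*\mu_0=r(\mu_0)$. Then $p_{v,r(\mu_0)}p^*_{v,r(\mu_0)}\alpha^E_{c,i}=\alpha^E_{c,i}$ by Lemma \ref{lemma:nuovo c1c1*}(7), after iterating the edge-by-edge identities.

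Since only finitely many $i$ have $k_{\mu_0,i}\neq0$, the result is
\[
p_{v,r(\mu_0)}\mu_0^*\cdot\mathfrak p=\sum_i k_{\mu_0,i}\alpha^E_{c,i},
\]
a nonzero finite element of $U^E_c$. It is nonzero because the $\alpha^E_{c,i}$ are $K$-linearly independent, as they have distinct annihilators.

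One technical point needs care. We must justify that the action on an infinite formal series really is termwise, with no infinite sums collapsing onto a single placeholder. This should follow from the explicit Cuntz--Krieger family in the Appendix. Still, we must check that, for a fixed target pair $(\nu,j)$, only finitely many source terms contribute. Terms with $\mu\neq\mu_0$ vanish outright, so here this is immediate. I expect this to be the main point requiring care: confirming that the Appendix formulas for $S_{e^*}$ on terms $\mu=e\mu'$ agree with Lemma \ref{lemma:mu*lambda}, including the cases where $\mu$ is a vertex of $c^0$ and $e$ is a cycle edge.

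\textbf{Step 2 (finish inside $U^E_c$).} Let $u=\sum_{i=1}^N k_i\alpha^E_{c,i}$ with $k_N\neq0$. Applying $(c-v)^{N-1}$ and using Lemma \ref{rem:contialpha}(1), together with the convention $\alpha_{c,j}=0$ for $j\le0$, gives $(c-v)^{N-1}u=k_N\alpha^E_{c,1}$. This is a nonzero element of $V^E_c$. Hence $L_K(E)\mathfrak p\cap V^E_c\neq0$ for every nonzero $\mathfrak p$, and $V^E_c$ is essential in $X_c^E$.
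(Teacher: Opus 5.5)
Your proposal is correct and follows the paper's proof essentially step for step: you apply $p_{v,r(\mu_0)}\mu_0^*$ to isolate the finite sum $\sum_i \mathfrak p(\mu_0,i)\alpha^E_{c,i}\in U^E_c$, and then a scalar multiple of $(c-v)^{\overline i-1}$ to land on $\alpha^E_{c,1}$. Two small remarks: citing Lemma~\ref{rem:contialpha}(7) for $p_{v,r(\mu_0)}p^*_{v,r(\mu_0)}\alpha^E_{c,i}=\alpha^E_{c,i}$ is the right reference. Also, ``distinct annihilators'' does not by itself give linear independence, but your own Step~2 computation $(c-v)^{N-1}u=k_N\alpha^E_{c,1}\neq 0$ already supplies the needed nonvanishing, so nothing is lost.
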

\begin{proof}
We already observed  that $V^E_{c}\subseteq U^E_c\subseteq X_c^E$. Moreover, the left $L_K(E)$-module structure of $X_c^E$ described in 
Proposition~\ref{prop:formalisTmod} makes $V^E_{c}=L_K(E)\alpha^E_{c,1}$ a left $L_K(E)$-submodule of 
$X_c^E$. 
Now, let $\mathfrak p=\sum_{(\mu,i)\in F^+_E(c^0)\times \mathbb N }\mathfrak p(\mu, i)\cdot \mu p^*_{v,r(\mu)}\alpha^E_{c,i}$ be a nonzero element in $X_c^E$. Pick $(\mu_0,i_0)\in F^+_E(c^0)\times \mathbb N $ such that
$\mathfrak p(\mu_0,i_0)\neq 0$. Multiplying $\mathfrak p$ on the left by
$p_{v,r(\mu_0)}\mu_0^*$, by Lemma~\ref{lemma:mu*lambda} and Lemma~\ref{lemma:nuovo c1c1*}(2), we get
\[p_{v,r(\mu_0)}\mu_0^*\mathfrak p = \sum_{i\in\mathbb N }\mathfrak p(\mu_0,i)\alpha^E_{c,i}.
\]
The scalars $\mathfrak p(\mu_0,i)$  are equal to zero for almost all $i\geq 1$ and $\mathfrak p(\mu_0,i_0)\neq 0$. Let $\overline i=\max\{i\in\mathbb N \mid \mathfrak p(\mu_0,i)\neq 0\}$. Then 
\begin{align*}
    \mathfrak p(\mu_0,\overline i)^{-1}(c-v)^{\overline i-1}p_{v,r(\mu_0)}\mu_0^*\cdot \mathfrak p &=
\mathfrak p(\mu_0,\overline i)^{-1}(c-v)^{\overline i-1}\sum_{i\in\mathbb N }\mathfrak p(\mu_0,i)\alpha^E_{c,i}\\
&=\mathfrak p(\mu_0,\overline i)^{-1}(c-v)^{\overline i-1}\sum_{i=1}^{\overline i}\mathfrak p(\mu_0,i)\alpha^E_{c,i}\\
&=\alpha^E_{c,1}\in V^E_{c}.  \qedhere \end{align*}
 \end{proof}

We need one more set of observations prior to establishing our main result.

\begin{lemma}\label{lemma:factors}
Let $J$ be a left ideal of $L_K(E)$ contained in $I(v)$.   Then $J$ is a  left $I(v)$-module.   Furthermore, if $\varphi \in  
 \Hom_{L_K(E)}(J,X_c^E)$, then  $\varphi(J) \subseteq U^E_c$.
\end{lemma}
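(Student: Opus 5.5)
The first claim is immediate. $J$ is a left ideal of $L_K(E)$ and $I(v)\subseteq L_K(E)$, so $I(v)J\subseteq J$. For unitarity, take $x\in J$. Since $I(v)$ has local units (Lemma \ref{lem:restrictedversushedgehog}(2)), there is an idempotent $e\in I(v)$ with $ex=x$. Hence $J=I(v)J$, and $J$ is a unitary left $I(v)$-module.

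For the second claim, I will use the ideal $I(v)$ to cut $X_c^E$ down to a computable piece. Let $\varphi\in\Hom_{L_K(E)}(J,X_c^E)$ and $x\in J$. Write $x=ex$ with $e\in I(v)$ idempotent, so that $\varphi(x)=e\,\varphi(x)\in I(v)\cdot X_c^E$. It therefore suffices to show
\[
I(v)\cdot X_c^E\subseteq U^E_c .
\]
Since $I(v)=L_K(E)vL_K(E)$ and $U^E_c$ is an $L_K(E)$-submodule of $X_c^E$, this reduces to showing $v\cdot\mathfrak p\in U_c^E$ for every $\mathfrak p\in X_c^E$.

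To compute $v\cdot\mathfrak p$ I use the explicit action of $P_v$ in Proposition \ref{prop:formalisTmod}:
\[
v\cdot \mathfrak p=\sum_{(\mu,i),\ s(\mu)=v}k_{\mu,i}\,\mu p^*_{v,r(\mu)}\alpha^E_{c,i}.
\]
The key point is which $\mu\in F^+_E(c^0)$ can have $s(\mu)=v$. If $\mu$ is a vertex, then $\mu=v$. If $\mu$ has length $\ge 1$, the definition of $F_E(T(c^0))$ requires $s(\mu)\notin T(c^0)$, while $v\in c^0\subseteq T(c^0)$; so this case is impossible. Thus the only surviving index is $\mu=v$. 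Since $\mathfrak p(v,i)=0$ for almost all $i$, we get
\[
v\cdot\mathfrak p=\sum_{i}k_{v,i}\,\alpha^E_{c,i},
\]
which is a finite sum and hence lies in $U^E_c$.

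Finally, for $a,b\in L_K(E)$ we have $avb\cdot\mathfrak p=a\cdot\big(v\cdot(b\cdot\mathfrak p)\big)$, and $b\cdot\mathfrak p\in X_c^E$. By the previous paragraph $v\cdot(b\cdot\mathfrak p)\in U_c^E$, and $U_c^E$ is closed under the action of $a$. Hence $I(v)X_c^E\subseteq U_c^E$, and so $\varphi(J)\subseteq U^E_c$.

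The only delicate step is the vertex case. It depends on the definition of $F^+_E(c^0)$: paths of positive length in it start outside the hereditary closure of $c^0$. That is what guarantees only the $\mu=v$ terms survive multiplication by $v$.
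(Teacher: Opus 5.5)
Your proof is correct and follows essentially the paper's argument. Both proofs use the local units of $I(v)$ to write $j=x_jj$, then note that multiplying a source-finite series by an idempotent supported in $T(v)$ kills every term except the finitely many $\alpha^E_{c,i}$-terms indexed by vertices of $c^0$, because positive-length paths in $F^+_E(c^0)$ start outside $T(v)$. The only difference is cosmetic: the paper factors $x_j=\sum_\ell\gamma_\ell\rho\,\delta_\ell^*$ with $\rho\in\Sigma(T(v))$ using the monomial normal form of elements of $I(v)$, whereas you use $I(v)=L_K(E)vL_K(E)$ and the single idempotent $v$, which states the containment $I(v)\cdot X_c^E\subseteq U^E_c$ explicitly.
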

\begin{proof}
Since $I(v)$ has local units (see \Cref{lem:restrictedversushedgehog}) we have 
\[J=I(v)J,\]
and the first statement follows.  For the same reason,  for each $j\in J$ there exists $x_j\in I(v)$ such that $j=x_j j$. By \cite[Lemma 2.4.1]{AAM} $x_j=\sum_{\ell=1}^m \gamma_\ell\delta^*_\ell$ with $\gamma_\ell,\delta_\ell\in\Path(E)$, $r(\gamma_\ell)=r(\delta_\ell)\in T(v)$. If  $\rho$ denotes the sum of distinct elements in the set $\{r(\gamma_\ell) \ | \ 1\leq \ell \leq m\}$,
then $x_j=\sum_{\ell=1}^m \gamma_\ell \rho \delta^*_\ell$. Thus
\[
\varphi(j)=\varphi(x_j j)=\sum_{\ell=1}^m \gamma_\ell \rho\varphi(\delta_\ell^* j).
\]
If $\varphi(\delta_\ell^* j)=\sum_{(\mu,i)\in F^+_E(c^0)\times\mathbb N }k_{\mu,i} \mu p^*_{v,r(\mu)}\alpha^E_{c,i}
$, since $u\mu=0$ for each $u\in H=T(v)\setminus c^0$, and $\mu\in F^+_E(c^0)$, then
\begin{align*}
    \rho\varphi(\delta_\ell^* j)&=\rho \sum_{(\mu,i)\in F^+_E(c^0)\times\mathbb N }k_{\mu,i} \mu p^*_{v,r(\mu)}\alpha^E_{c,i}\\
    &=\rho\sum_{y=1}^n  v_y \sum_{(\mu,i)\in F^+_E(c^0)\times\mathbb N }k_{\mu,i} \mu p^*_{v,r(\mu)}\alpha^E_{c,i}\\
    &=\rho \sum_{y=1}^n\sum_{i\geq 1}k_{v_y,i}p^*_{v,v_y} \alpha^E_{c,i}\in U^E_c
\end{align*}
because for any $1\leq y\leq n$, $k_{v_y,i}=0$ for almost $i\geq 1$. Thus 
\[
\varphi(j)=\sum_{\ell=1}^m \gamma_\ell \rho\varphi(\delta_\ell^* j)\in U^E_c.
\qedhere \]
\end{proof}

We now have all the pieces in place  to establish the main result of the article.  

\begin{theorem}\label{injhulltheoremc}
Let $E$ be any graph, and let $c$ be an exclusive cycle in $E$.  Then the extended Pr\"{u}fer left $L_K(E)$-module $X_c^E := K[[F^+_E(c^0)[\mathbb N ]]]^{sf}$ is the injective envelope of the simple left $L_K(E)$-module $V^E_{c}$. 
\end{theorem}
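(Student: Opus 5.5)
By \Cref{L(E)vessentialinMc}, $V^E_c$ is already an essential submodule of $X^E_c$. So it remains to show that $X_c^E$ is injective in $L_K(E)\Mod$. I plan to apply \Cref{prop:Baire} with the fixed left ideal $I:=I(v)$, the two-sided ideal generated by $v$. This splits the work into two parts: (a) extending homomorphisms defined on left ideals $J\leq I(v)$ to $I(v)$, and (b) extending homomorphisms defined on left ideals $Y\geq I(v)$ to all of $L_K(E)$.

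For (a), let $\varphi:J\to X_c^E$ with $J\leq I(v)$. By \Cref{lemma:factors}, $J$ is an $I(v)$-module and $\varphi(J)\subseteq U^E_c$. By \Cref{VandUareI(v)modules} and \Cref{cor;Pruferiniettivo}, $U^E_c$ is an injective $I(v)$-module. Moreover, $L_K(E)$-linear and $I(v)$-linear maps between $I(v)$-modules coincide, since $I(v)$ has local units and acts unitally. Hence $\varphi$ extends to an $I(v)$-homomorphism $I(v)\to U^E_c\subseteq X^E_c$. This map is also $L_K(E)$-linear: for $r\in L_K(E)$ and $x\in I(v)$, write $x=ux$ with $u\in I(v)$ a local unit, and note $ru\in I(v)$.

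For (b), I will reduce to the single case $Y=I(v)$ via \Cref{lemma:restriction}, so I must check that restriction $\Hom(Y,X^E_c)\to\Hom(I(v),X^E_c)$ is injective. Suppose $g:Y\to X_c^E$ vanishes on $I(v)$ but $g\neq 0$. Then $g(Y)$ is a nonzero submodule of $X^E_c$, so by essentiality it contains $\alpha^E_{c,1}=g(y)$ for some $y\in Y$. Since $v\in I(v)$, we get $\alpha^E_{c,1}=v\alpha^E_{c,1}=g(vy)=0$, a contradiction.

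Now let $f:I(v)\to X^E_c$. By \Cref{lemma:Rmu*}, $I(v)=\bigoplus_{\mu\in F^+_E(T(v))}L_K(E)\mu^*$, so $f$ is determined by the elements $f(\mu^*)$. By \Cref{lemma:factors}, each $f(\mu^*)$ lies in $U^E_c$, and $r(\mu)f(\mu^*)=f(\mu^*)$. I will form the formal series
\[\mathfrak m:=\sum_{\mu\in F^+_E(T(v))}\mu\, f(\mu^*)\in K[[F^+_E(c^0)[\mathbb N]]].\]
Its legitimacy must be checked. A nonzero term of $f(\mu^*)$ has the shape $\nu p^*_{v,r(\nu)}\alpha^E_{c,i}$ with $s(\nu)=r(\mu)\in T(v)$. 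Because $H=T(v)\setminus c^0$ is hereditary and cannot reach $c^0$, this forces $r(\mu)\in c^0$ and $\nu=r(\mu)$. Thus $\mu\nu=\mu\in F^+_E(c^0)$, distinct $\mu$ yield distinct indices, and each index carries only finitely many $i$. So $\mathfrak m$ is a well-defined, though possibly not source-finite, element of the premodule of \Cref{prop:formalisTmod}.

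Define $\hat f(a):=a\cdot\mathfrak m$. By \Cref{prop:formalisTmod}, this is an $L_K(E)$-homomorphism $L_K(E)\to X^E_c$. To see that it extends $f$, it suffices to check $\lambda^*\cdot\mathfrak m=f(\lambda^*)$ for every $\lambda\in F^+_E(T(v))$. Here I will use that the Cuntz--Krieger family of the Appendix acts termwise, together with \Cref{lemma:mu*lambda}, which gives $\lambda^*\mu=\delta_{\lambda,\mu}r(\mu)$. This termwise identity is the main obstacle. One must justify that a ghost path applied to an infinite formal series kills every summand with $\mu\neq\lambda$; this includes summands where $\mu$ is longer than $\lambda$ through vertices outside $T(v)$, possibly passing through infinite emitters. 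I would first verify it directly from the explicit formulas for $S_{e^*}$ in the Appendix, inducting on the length of $\lambda$. Granting this, \Cref{lemma:restriction} and \Cref{prop:Baire} give that $X^E_c$ is injective, and hence it is the injective envelope of $V^E_c$.
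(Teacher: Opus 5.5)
Your proof is correct and follows the paper's architecture. You apply \Cref{prop:Baire} with $I(v)$, and you handle part (a) via \Cref{lemma:factors} and \Cref{cor;Pruferiniettivo}. You extend from $I(v)$ to $L_K(E)$ by acting on a single formal series; your $\mathfrak m=\sum_\mu \mu f(\mu^*)$ is exactly the paper's $\epsilon_\varphi$.

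The step that genuinely differs is the injectivity of $\Hom_{L_K(E)}(Y,X_c^E)\to\Hom_{L_K(E)}(I(v),X_c^E)$.
\begin{itemize}
\item The paper takes $\psi_1\neq\psi_2$, locates a coordinate $\gamma\in F^+_E(c^0)$ where $\psi_1(y)$ and $\psi_2(y)$ differ, and applies $\gamma^*\in I(v)$ to isolate that coordinate.
\item You observe instead that a nonzero map vanishing on $I(v)$ would have image containing the essential simple submodule $V^E_c$, and $V^E_c$ is not annihilated by $v\in I(v)$.
\end{itemize}
Your argument is shorter and coordinate-free. In effect it shows that $\Hom_{L_K(E)}(N,X_c^E)=0$ for every module $N$ with $I(v)N=0$, applied to $N=Y/I(v)$. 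The paper's version stays inside the explicit series calculus it uses throughout.

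The ``main obstacle'' you flag, that $\lambda^*$ acts termwise on $\mathfrak m$, is not a real one.
\begin{itemize}
\item Every edge of $\lambda\in F_E(T(v))$ has source outside $T(v)\supseteq c^0$. So the Appendix formula $S_{e^*}(\mathfrak p)(\mu,i)=\mathfrak p(e\mu,i)$ (for $s(\mu)=r(e)$, and $0$ otherwise) iterates to $(\lambda^*\cdot\mathfrak m)(\mu,i)=\mathfrak m(\lambda\mu,i)$ when $s(\mu)=r(\lambda)$, and $0$ otherwise.
\item The only elements of $F^+_E(c^0)$ that start at a vertex of $T(v)$ are the vertices of $c^0$ themselves. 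Hence only $\mu=r(\lambda)\in c^0$ survives, which gives $\lambda^*\cdot\mathfrak m=f(\lambda^*)$ directly.
\item The vertex case $\lambda\in T(v)$ works the same way with $P_\lambda$.
\end{itemize}
This is exactly what the paper uses, by citing the termwise description in \Cref{prop:formalisTmod} together with \Cref{lemma:mu*lambda}.
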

\begin{proof}
By The Second Piece (Lemma~\ref{L(E)vessentialinMc}) we know that $V^E_{c}$ is essential in $X_c^E$. 
Thus we need only establish that $X_c^E$ is an injective left $L_K(E)$-module. We use  the aforementioned Proposition~\ref{prop:Baire},  applied here to the two-sided ideal $I(v)$ of $L_K(E)$.     So it suffices to show that:

\smallskip

 (1) if $J$ is a left ideal of $L_K(E)$ contained in $I(v)$ then any $L_K(E)$-homomorphism from $J$ to $X_c^E$ extends to a $L_K(E)$-homomorphism from $I(v)$ to $X_c^E$, and 

(2) if $D$ is any left ideal of $L_K(E)$  containing $I(v)$ then any $L_K(E)$-homomorphism from $D$ to $X_c^E$ extends to a $L_K(E)$-homomorphism from $L_K(E)$ to $X_c^E$.  

\smallskip

To establish (1), we recall that,  by Lemma~\ref{lemma:factors}, if $J$ is a left ideal of $L_K(E)$ contained in $I(v)$ then any $L_K(E)$-homomorphism $\varphi$ from $J$ to $X_c^E$ 
has $\varphi(J) \subseteq U_c^E$.  We rename the homomorphism $\varphi: J \to U_c^E$ as $\hat{\varphi}$.    Since both $J$ and $U^E_c$ are left $I(v)$-modules, and since by the First Piece  (\Cref{cor;Pruferiniettivo}) $U^E_{c}$ is an injective $I(v)$-module, $\hat\varphi$ extends to a homomorphism $\overline\varphi:I(v) \to U^E_{c}$ of left $I(v)$-modules. We prove that $\overline\varphi$ is also a homomorphism of left $L_K(E)$-modules. 
Since \[I(v)=L_K(E)\Sig (T(v)) L_K(E)=I(v) \Sig (T(v)) I(v),\] each $x\in I(v)$ is equal to a suitable sum $\sum_{j=1}^m x_{j}\rho x'_{j}$ with $x_{j}, x'_{j}\in I(v)$, $\rho\in \Sig (T(v))$. Then for each $r\in L_K(E)$ we have
\begin{align*}
    \overline\varphi(rx)&=\overline\varphi(r\sum_{j=1}^m x_{j}\rho x'_{j})=\overline\varphi(\sum_{j=1}^m rx_{j}\rho x'_{j})=\sum_{j=1}^m \overline\varphi((rx_{j})\rho x'_{j})\\
&=\sum_{j=1}^m(rx_{j})\overline\varphi(\rho x'_{j})=\sum_{j=1}^m r(x_{j}\overline\varphi(\rho x'_{j}))=r\sum_{j=1}^m \overline\varphi(x_{j}\rho x'_{j})=r\overline\varphi(x),
\end{align*}
as desired.    Then composing the $L_K(E)$-homomorphism $\overline\varphi$ with the inclusion of $U^E_c$ in $X_c^E$ we have  extended $\varphi:J\to X_c^E$ to a homomorphism $I(v)\to X_c^E$, thus establishing (1).  

We establish (2) as follows.   First we show that the result holds for $D = I(v)$. By Proposition~\ref{lemma:Rmu*}, any $L_K(E)$-homomorphism $\varphi:I(v)\to X_c^E$ is uniquely determined by the values of $\varphi(\mu^*)$, where $\mu \in F^+_E(T(v))$. If $\mu \in F^+_E(T(v))\setminus F^+_E(c^0)$, then $\varphi(\mu^*)=r(\mu)\varphi(\mu^*)=0$: indeed $r(\mu)\in T(v)\setminus c^0$ acts as zero on $X_c^E$.
If $\mu \in F^+_E(c^0)$, let us assume
\[\varphi(\mu^*)=\sum_{(\nu,i)\in F^+_E(c^0)\times \mathbb N }k_{\nu, i,\mu} \nu p^*_{v,r(\nu)}\alpha^E_{c,i}\]
for suitable $k_{\nu, i,\mu}\in K$ .
Since $\varphi(\mu^*)=\varphi(r(\mu)\mu^*)=r(\mu)\varphi(\mu^*)$, and $r(\mu) = r(\mu)^*\in F^+_E(c^0)$, by Lemma~\ref{lemma:mu*lambda} we have 
\begin{align*}
\varphi(\mu^*)&=r(\mu)\varphi(\mu^*)=r(\mu)
\sum_{(\nu,i)\in F^+_E(c^0)\times \mathbb N }k_{\nu, i,\mu} \nu p^*_{v,r(\nu)}\alpha^E_{c,i}\\
&=\sum_{i\in\mathbb N }k_{r(\mu), i,\mu}p_{v,r(\mu)}^*\alpha^E_{c,i}.
\end{align*}

Now, define $\epsilon_\varphi \in K[[F^+_E(c^0)[\mathbb N ]]]$ by setting  
\[\epsilon_\varphi :=\sum_{(\gamma,i)\in F^+_E(c^0)\times \mathbb N }k_{r(\gamma), i,\gamma} \gamma p^*_{v,r(\gamma)}\alpha^E_{c,i}.\]
Since $L_K(E)\cdot K[[F^+_E(c^0)[\mathbb N ]]]=K[[F^+_E(c^0)[\mathbb N ]]]^{sf} := X_c^E$, we can define 
\[\theta: L_K(E) \to X_c^E,\quad \theta(r):=r\cdot \epsilon_\varphi.\]
For each $\mu\in F^+_E(T(v))\setminus F^+_E(c^0)$, we have $\mu^*\cdot \epsilon_\varphi=0$. Therefore, to show that $\theta$ extends $\varphi$, it suffices to show that $\varphi(\mu^*) = \mu^* \cdot \epsilon_\varphi$ for each  $\mu \in F^+_E(c^0)$. But again invoking Lemma~\ref{lemma:mu*lambda} we get 
\begin{align*}
\mu^* \cdot \epsilon_\varphi&=\mu^*\sum_{(\gamma,i)\in F^+_E(c^0)\times \mathbb N }k_{r(\gamma), i,\gamma} \gamma p^*_{v,r(\gamma)}\alpha^E_{c,i}\\
&=
    \sum_{i\in\mathbb N }k_{r(\mu), i,\mu}p^*_{v,r(\mu)}\alpha^E_{c,i},
\end{align*}
which  is precisely $\varphi(\mu^*)$. 

Next, let  $\psi: D\to X_c^E$ be a homomorphism of left $L_K(E)$-modules, where $D$ is a left ideal of $L_K(E)$ containing $I(v)$.   We  prove that the restriction map 
\[\Hom_{L_K(E)}(D, X_c^E)\to \Hom_{L_K(E)}(I(v), X_c^E)\]
is injective, and the conclusion will follow from  Lemma~\ref{lemma:restriction}. On the contrary, assume that $\psi_1,\psi_2: D \to X_c^E$ are two different homomorphisms with $\psi_1(x)=\psi_2(x)$ for all $x\in I(v)$. Since $\psi_1\not=\psi_2$, there exists $y\in D$ such that $\psi_1(y) \not= \psi_2(y)$ in $X_c^E$. Set
\[\psi_1(y)=\sum_{(\mu,i)\in F^+_E(c^0)\times \mathbb N }k_{\mu, i} \mu p^*_{v,r(\mu)}\alpha^E_{c,i}\quad\text{and}\]
\[
\psi_2(y)=\sum_{(\mu,i)\in F^+_E(c^0)\times \mathbb N }h_{\mu, i} \mu p^*_{v,r(\mu)}\alpha^E_{c,i}.\]
So there exists $\gamma \in F^+_E(c^0)$ such that 
\[\sum_{i\in\mathbb N }k_{\gamma,i}p^*_{v,r(\gamma)}\alpha^E_{c,i}\not= \sum_{i\in\mathbb N }h_{\gamma,i}p^*_{v,r(\gamma)}\alpha^E_{c,i}\]
for appropriate $k_{\gamma,i}, h_{\gamma, i} \in K$.  
By Lemma~\ref{lemma:mu*lambda} we have $\psi_1(\gamma^* y)=\gamma^* \cdot\psi_1(y)  = \sum_{i\in\mathbb N }k_{\gamma,i}p^*_{v,r(\gamma)}\alpha^E_{c,i}$, and similarly $\psi_2(\gamma^* y)=\gamma^* \cdot \psi_2(y) =
\sum_{i\in\mathbb N }h_{\gamma,i}p^*_{v,r(\gamma)}\alpha^E_{c,i}$. Since $\gamma^* \in I(v)$ we have  $\gamma^* y \in I(v)$. 
But then 
 \begin{align*}
 \sum_{i\in\mathbb N }k_{\gamma,i}p^*_{v,r(\gamma)}\alpha^E_{c,i} &= \gamma^*  \cdot \psi_1(y) =  \psi_1(\gamma^* y)\\
 &= \psi_2(\gamma^* y)=\gamma^* \cdot \psi_2(y) = \sum_{i\in\mathbb N }h_{\gamma,i}p^*_{v,r(\gamma)}\alpha^E_{c,i},
 \end{align*}
 contrary to the displayed inequality.  

 Thus we have shown that the extended Pr\"{u}fer left $L_K(E)$-module $X_c^E:= K[[F^+_E(c^0)[\mathbb N ]]]^{sf}$ is  injective, which, along with Lemma \ref{L(E)vessentialinMc}, establishes our main result.  
\end{proof}

\begin{remark}
In the First Step above we utilized our result \cite[Theorem 6.4]{AMT19}, which says that in a finite graph $E$ the Pr\"{u}fer $L_K(E)$-module $U_c^E$ is the injective envelope of the simple $L_K(E)$-module $V_c^E$.  Theorem \ref{injhulltheoremc} demonstrates that \cite[Theorem 6.4]{AMT19} does not extend even to graphs with finitely many vertices.  For instance, in the graph
\begin{center}
\begin{tikzpicture}[
  >=Stealth,
  line width=0.9pt,
  vertex/.style={inner sep=1.2pt,outer sep=1pt},
  bigloopabove/.style={
    loop above,
    out=130,
    in=50,
    min distance=15mm,
    looseness=4
  },
  edgelabel/.style={
    midway,
    fill=white,
    inner sep=1.2pt,
    font=\footnotesize
  }
]

\node[vertex] (u) at (0,0) {$u$};
\node[vertex] (v) at (4,0) {$v$};

\draw[->] (v) edge[bigloopabove]
  node[edgelabel] {$\varepsilon$} (v);

\draw[->] (u) to[bend left=48]
  node[edgelabel,pos=0.50,sloped,inner xsep=1.9pt] {$f_1$} (v);

\draw[->] (u) to[bend left=22]
  node[edgelabel,pos=0.50,sloped,inner xsep=1.9pt] {$f_2$} (v);

\node[fill=white,inner sep=-1.1pt] at (2,0.10) {$\vdots$};

\draw[->] (u) to[bend right=22]
  node[edgelabel,pos=0.50,sloped,inner xsep=1.9pt] {$f_n$} (v);

\node[fill=white,inner sep=-1.1pt] at (2,-0.85) {$\vdots$};

\end{tikzpicture}
\end{center}
(there are infinitely many edges from $u$ to $v$), the injective module $X_{\varepsilon}^E$, in which $U_{\varepsilon}^E$ is an essential submodule,  is properly larger than $U_{\varepsilon}^E$.  (For example, the formal series $\sum_{j\in \mathbb{N}} f_j \alpha_{\varepsilon,1}^E$ is an element of $X_\varepsilon^E \setminus U_{\varepsilon}^E$.) In particular, this implies that $U_{\varepsilon}^E$ is not injective as a left $L_K(E)$-module. 

On the other hand, for infinite graphs the Pr\"{u}fer module associated to a maximal cycle may indeed be injective, even if there are infinitely many arrows ending in that maximal cycle. For example, consider the graph $E'$
\begin{center}
\begin{tikzpicture}[
  >=Stealth,
  line width=0.9pt,
  vertex/.style={inner sep=1.2pt,outer sep=1pt},
  bigloopabove/.style={
    loop above,
    out=130,
    in=50,
    min distance=15mm,
    looseness=4
  },
  edgelabel/.style={
    midway,
    fill=white,
    inner sep=1.2pt,
    font=\footnotesize
  }
]

\node[vertex] (v) at (4.2,1.25) {$v$};

\node[vertex] (u1) at (0,2.35) {$u_1$};
\node[vertex] (u2) at (0,1.55) {$u_2$};

\node at (0,0.85) {$\vdots$};

\node[vertex] (un) at (0,0.05) {$u_n$};

\node at (0,-0.65) {$\vdots$};

\draw[->] (v) edge[bigloopabove]
  node[edgelabel] {$\delta$} (v);

\draw[->] (u1) to[bend right=4]
  node[edgelabel,pos=0.48] {$d_1$} (v);

\draw[->] (u2) to[bend right=2]
  node[edgelabel,pos=0.50] {$d_2$} (v);

\node[fill=white,inner sep=0.7pt]
  at (2.05,0.88) {$\vdots$};

\draw[->] (un) to[bend left=5]
  node[edgelabel,pos=0.50] {$d_n$} (v);

\node[fill=white,inner sep=0.7pt]
  at (2.05,-0.10) {$\vdots$};

\end{tikzpicture}
\end{center}
Then the extended Pr\"ufer left module $X^{E'}_\delta$ coincides with the Pr\"ufer left module $U^{E'}_\delta$.  We note that $U^{E'}_\delta=X_\delta^{E'} = K[[F_{E'}^+(v)[\mathbb N]]]^{sf}$  is strictly contained in the $L_K(E')$-premodule $K[[F_{E'}^+(v)[\mathbb N]]].$
\end{remark}

\medskip

As mentioned previously, we will use our  main result  as a template for a more general result, in which we present the injective envelope of $V^E_{p(x),c}$, where $p(x)\in K[x,x^{-1}]$ is a basic irreducible polynomial.

For $p(x)\in K[x]$ having ${\rm deg}(p(x))\geq 1$, we denote by $\mathbb Z^+_{<\deg p(x)}$ the set of integers $\{0, 1, 2, \dots, \deg p(x)-1\}.$
Analogously to \Cref{Mcdef,def;KF+0} we give the following 

\begin{definition}
We denote by $K[[F^+_E(c^0)[\mathbb N ][\mathbb Z^+_{<\deg p(x)}]]]$ the $K$-vector space of all functions $\mathfrak p$ from $F^+_E(c^0)\times \mathbb N \times \mathbb Z^+_{<\deg p(x)}$ to $K$ that assume for each $\mu\in F^+_E(c^0)$, $0\leq j<\deg p(x) $, the value $0$ for almost all $i\in\mathbb N $. We represent any element $\mathfrak p\in K[[F^+_E(c^0)[\mathbb N ][\mathbb Z^+_{<\deg p(x)}]]]$ as a ``formal series''
\[\mathfrak p=\sum_{(\mu,i,j)\in F^+_E(c^0)\times \mathbb N \times \mathbb Z^+_{<\deg p(x)}}k_{\mu,i,j} \mu p^*_{s(c), r(\mu)}c^j\alpha_{p(x),c,i}, \]
where $k_{\mu,i,j}=\mathfrak p(\mu,i,j)\in K$, and $\alpha_{p(x),c,i}$ are the generators of the Pr\"ufer left $L_K(E)$-module $U^E_{p(x),c}$.

We denote by $K[[F^+_E(c^0)[\mathbb N ][\mathbb Z^+_{<\deg p(x)}]]]^{sf}$ the $K$-vector subspace of the \emph{source finite} functions belonging to $K[[F^+_E(c^0)[\mathbb N ][\mathbb Z^+_{<\deg p(x)}]]]$, i.e., functions $\mathfrak p$ such that $\{s(\mu): \mathfrak p(\mu,i,j)\not=0\  \exists i\in\mathbb N ,\exists j\in \mathbb Z^+_{<\deg p(x)}\}$ is finite.

For notational convenience we denote by $X_{p(x),c}^E$ the left $L_K(E)$-module
$$X_{p(x),c}^E := K[[F^+_E(c^0)[\mathbb N ][\mathbb Z^+_{<\deg p(x)}]]]^{sf},$$
and we call $X_{p(x),c}^E$ {\rm the extended Pr\"{u}fer left $L_K(E)$-module associated to $p(x)$ and $c$.}
\end{definition}

\begin{remark}
    Observe that each triple $(\mu,i,j)\in F^+_E(c^0)\times \mathbb N \times \mathbb Z^+_{<\deg p(x)}$ uniquely determines the element $\mu p^*_{s(c),r(\mu)}c^j\alpha_{p(x),c,i}\in U^E_{p(x),c}$. Indeed, if
    \[\mu_1 p^*_{s(c),r(\mu_1)}c^{j_1}\alpha_{p(x),c,i_1}=\mu_2 p^*_{s(c),r(\mu_2)}c^{j_2}\alpha_{p(x),c,i_2}\quad i_1\geq i_2\]
 then necessarily $i_1=i_2$: indeed, if $i_1>i_2$, then the first element does not belong to $L_K(E)\alpha_{p(x),c,i_2}$. Then, setting  $i:=i_1=i_2$, we have
 \[\mu_1 p^*_{s(c),r(\mu_1)}c^{j_1}\alpha_{p(x),c,i}=\mu_2 p^*_{s(c),r(\mu_2)}c^{j_2}\alpha_{p(x),c,i}.\]
 Assume now $j_1\leq j_2$. Since $p_{s(c),r(\mu)}p^*_{s(c),r(\mu)}c=p_{s(c),r(\mu)}p_{r(\mu), s(c)}=c$ and 
by \Cref{rem:contialpha-pversion}(7),
multiplying by $(c^{j_1})^*p_{s(c),r(\mu_1)}\mu_1^*$ we get
\[
0\not=\alpha_{p(x),c,i}=(c^{j_1})^*p_{s(c),r(\mu_1)}\mu_1^*\mu_2 p^*_{s(c),r(\mu_2)}c^{j_2}\alpha_{p(x),c,i}\quad\forall 0\leq j_1\leq j_2.\]
Necessarily, we get $\mu_1=\mu_2=:\mu$ by \Cref{lemma:mu*lambda} and then
\begin{align*}
    0\not=\alpha_{p(x),c,i}&=(c^{j_1})^*p_{s(c),r(\mu)}p^*_{s(c),r(\mu)}c^{j_2}\alpha_{p(x),c,i}\\
    &=(c^{j_1})^*c^{j_2}\alpha_{p(x),c,i}=c^{j_2-j_1}\alpha_{p(x),c,i}\quad\Rightarrow \quad j_1=j_2.
\end{align*}
 Therefore, it is not inappropriate to use $\mu p^*_{s(c),r(\mu)}c^j\alpha_{p(x),c,i}$ as a placeholder for the scalar $k_{\mu,i,j}$. This notation allows us to naturally view the Pr\"ufer left $L_K(E)$-module $U^E_{p(x),c}$ as a $K$-subspace of the $K$-vector space $K[[F^+_E(c^0)[\mathbb N ][\mathbb Z^+_{<\deg p(x)}]]]$ (see \Cref{prop:U^pdescrizione}).
\end{remark}

We present the generalized version of our main result here.  Its proof can be achieved using an approach completely  analogous to the one used above to establish Theorem \ref{injhulltheoremc}, along with the germane ideas presented in Section \ref{section:LPA}.  
\begin{theorem}\label{generalinjhulltheoremc}   Let $E$ be any graph, and $K$ any field. Let $c$ be an exclusive cycle in $E$, and $p(x)$ a basic irreducible polynomial in $K[x,x^{-1}]$.  Then the extended Pr\"{u}fer left $L_K(E)$-module associated to $p(x)$ and $c$, i.e., 
\[ X_{p(x),c}^E := K[[F^+_E(c^0)[\mathbb N ][\mathbb Z^+_{<\deg p(x)}]]]^{sf},\] is the injective envelope of the simple left $L_K(E)$-module $V^E_{p(x),c}$. 
\end{theorem}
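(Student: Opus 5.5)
The plan is to run the proof of Theorem~\ref{injhulltheoremc} again, with $c-v$ replaced by $p(c)$ throughout and with the extra index $j\in\mathbb Z^+_{<\deg p(x)}$ carried along. The proof splits into the same two pieces.

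\emph{First Piece: $U^E_{p(x),c}$ is the injective envelope of $V^E_{p(x),c}$ in $I(v)\Mod$.}

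1. Pass to the quotient $\Phi:L_K(E_{T(v)})\to L_K(F)$. There $c$ is a source cycle in the finite graph $F$, so by \cite[Theorem 6.4]{AMT19} (in its $p(x)$-version, as noted in the Introduction) $U^F_{p(x),c}$ is the injective envelope of $V^F_{p(x),c}$. Lemma~\ref{lemma:injectivelocalunits} then transfers this to $L_K(E_{T(v)})\Mod$.

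2. Repeat Lemma~\ref{lemma:I(H)U=0} and Proposition~\ref{prop:isoETvF}, replacing $(c-v)^i$ by $p(c)^i$. The needed inputs are:
\begin{itemize}
\item the annihilation facts of Lemma~\ref{rem:contialpha-pversion}(4),(5);
\item the identity $\Phi(p(c)^ie_1e_1^*)=p(c)^i$, which holds as in Lemma~\ref{lemma:Phi(e_ie_i^*)}(3), since $e_n'$ and $v'$ kill everything sitting at $v$;
\item $e_1e_1^*\alpha_{p(x),c,i}=\alpha_{p(x),c,i}$, from Lemma~\ref{rem:contialpha-pversion}(7).
\end{itemize}
This gives $U^{E_{T(v)}}_{p(x),c}\cong U^F_{p(x),c}$.

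3. Lemma~\ref{lemmakillalpha} and Proposition~\ref{isoUEcGUEc} go through verbatim with $p(c)^i$ in place of $(c-v)^i$, using the Morita functor $G$ coming from Proposition~\ref{prop:newMorita}. Hence $U^E_{p(x),c}$ is injective in $I(v)\Mod$, and it is the envelope of $V^E_{p(x),c}$ there.

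\emph{Second Piece: the module $X^E_{p(x),c}$.} Put a premodule structure on $K[[F^+_E(c^0)[\mathbb N][\mathbb Z^+_{<\deg p(x)}]]]$ by a Cuntz--Krieger $E$-family acting termwise, exactly as in Proposition~\ref{prop:formalisTmod}. The new feature is that $e$ or $e^*$ applied to $\mu p^*_{v,r(\mu)}c^j\alpha_{p(x),c,i}$ can produce $c^{\deg p}\alpha_{p(x),c,i}$ or $c^*\alpha_{p(x),c,i}$. These must be rewritten in normal form using Lemma~\ref{rem:contialpha-pversion}(1),(3),(4), i.e. the division of $x^t$ by powers of $p(x)$ as in Proposition~\ref{prop:U^pdescrizione}. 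This rewriting mixes only the finitely many indices $i'\le i$ attached to the same $\mu$, or the neighbouring prefix of $\mu$. So each coefficient of the image is a finite combination of coefficients of the source, and the ``almost all $i$ zero for each $\mu$'' condition is preserved.

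Checking the relations (CK1), (CK2) and $e^*f=\delta_{e,f}r(e)$ is then reduced to the corresponding identities already valid in $U^E_{p(x),c}$, applied coefficientwise. The source-finite part is unitary, by the same argument as in Proposition~\ref{prop:formalisTmod}.

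Essentiality follows the argument of Lemma~\ref{L(E)vessentialinMc}:
\begin{itemize}
\item left multiplication by $p_{v,r(\mu_0)}\mu_0^*$ lands in $U^E_{p(x),c}$;
\item multiplication by a suitable power of $p(c)$ then lands in $L_K(E)\alpha_{p(x),c,1}\cong V^E_{p(x),c}$, which is nonzero because the top coefficient is a nonzero element of $K'=K[x]/(p)$.
\end{itemize}

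\emph{Injectivity.} Apply Proposition~\ref{prop:Baire} with $I=I(v)$.

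For left ideals $J\le I(v)$: the argument of Lemma~\ref{lemma:factors} shows that any homomorphism $J\to X^E_{p(x),c}$ lands in $U^E_{p(x),c}$. Now use the First Piece, and observe, as in the main proof, that $I(v)$-linearity implies $L_K(E)$-linearity.

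For left ideals $D\ge I(v)$: first extend a map $\varphi:I(v)\to X^E_{p(x),c}$ by setting
\[\epsilon_\varphi=\sum_\gamma \gamma\,\varphi(\gamma^*),\]
where each $\varphi(\gamma^*)$ is read as a finite combination of the $p^*_{v,r(\gamma)}c^j\alpha_{p(x),c,i}$. This uses Proposition~\ref{lemma:Rmu*} and Lemma~\ref{lemma:mu*lambda}. Then the restriction map $\Hom(D,X)\to\Hom(I(v),X)$ is injective, by detecting coefficients with $\gamma^*\in I(v)$. Lemma~\ref{lemma:restriction} finishes the argument.

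The main obstacle is the Second Piece: verifying that the termwise action is well defined and satisfies the Cuntz--Krieger relations once the $c^j$ normalization enters. I would first isolate the lemma that, on $U^E_{p(x),c}$, each generator $e$, $e^*$ maps the span of $\{\mu p^*_{v,r(\mu)}c^j\alpha_{p(x),c,i'}:\ i'\le i\}$ into the span of such terms with the same bound on $i'$ and with $\mu$ changed only by one edge. Given that lemma, the formal-series action is a locally finite extension and the relations follow by continuity.
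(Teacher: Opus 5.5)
Your proposal is correct and follows essentially the same route as the paper. The paper proves Theorem~\ref{generalinjhulltheoremc} by rerunning the proof of Theorem~\ref{injhulltheoremc} with $p(c)$ in place of $c-v$: the First Piece goes through the quotient $L_K(F)$, the $p(x)$-version of the finite-graph result for maximal cycles, and the Morita functor $G$; the Second Piece uses a termwise Cuntz--Krieger action on source-finite formal series; and injectivity comes from Proposition~\ref{prop:Baire} with $I=I(v)$. The one minor variation is how you check the Cuntz--Krieger relations. You note that each generator sends each $\mu$-block into a single block, so the relations reduce to those already holding in $U^E_{p(x),c}$; the paper's Appendix instead checks them by direct coordinate computations. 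Your reduction is valid and somewhat cleaner.
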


\begin{example}
Consider the following graph $E$:
    \begin{center}

    \begin{tikzpicture}[
  >=Stealth,
  line width=0.9pt,
  vertex/.style={inner sep=1.2pt,outer sep=1pt},
  bigloopabove/.style={loop above,out=130,in=50,min distance=15mm,looseness=4},
  bigloopbelow/.style={loop below,out=230,in=310,min distance=15mm,looseness=4},
  edgelabel/.style={midway,fill=white,inner sep=1.2pt,font=\footnotesize}
]

\node[vertex] (z) at (0,6.5) {$z$};

\node[vertex] (v1) at (4.55,6.20) {$v_1$};
\node[vertex] (v2) at (2.65,3.90) {$v_2$};
\node[vertex] (v3) at (2.55,0.60) {$v_3$};

\node[vertex] (u1) at (-0.55,4.55) {$u_1$};
\node[vertex] (u2) at (-0.55,3.95) {$u_2$};

\node at (-0.55,3.45) {$\vdots$};
\node at (0.65,3.62) {$\vdots$};

\node[vertex] (un) at (-0.55,2.82) {$u_n$};

\node at (-0.55,2.50) {$\vdots$};
\node at (0.65,2.68) {$\vdots$};

\node[vertex] (u) at (-0.90,0.40) {$u$};

\draw[->] (z)  edge[bigloopabove] node[edgelabel] {$\beta_1$} (z);
\draw[->] (z)  edge[bigloopbelow] node[edgelabel] {$\beta_2$} (z);
\draw[->] (v1) edge[bigloopabove] node[edgelabel] {$\gamma$} (v1);
\draw[->] (v2) edge[bigloopabove] node[edgelabel] {$\delta$} (v2);
\draw[->] (v3) edge[bigloopabove] node[edgelabel] {$\varepsilon$} (v3);

\draw[->] (z) to[bend left=3]
  node[edgelabel] {$b$} (v1);

\draw[->] (u1) to[bend left=6]
  node[edgelabel,pos=0.43,yshift=2pt] {$d_1$} (v2);

\draw[->] (u2) to[bend left=3]
  node[edgelabel,pos=0.54,yshift=-2pt] {$d_2$} (v2);

\draw[->] (un) to[bend right=8]
  node[edgelabel,pos=0.49,yshift=-2pt,inner xsep=1.8pt] {$d_n$} (v2);

\draw[->] (u) to[bend left=52]
  node[edgelabel,pos=0.50,sloped,inner xsep=1.9pt] {$f_1$} (v3);

\draw[->] (u) to[bend left=24]
  node[edgelabel,pos=0.50,sloped,inner xsep=1.9pt] {$f_2$} (v3);

\draw[->] (u) to[bend right=40]
  node[edgelabel,pos=0.50,sloped,inner xsep=1.9pt] {$f_n$} (v3);

\node[fill=white,inner sep=0.7pt] at (0.825,0.49) {$\vdots$};
\node[fill=white,inner sep=0.7pt] at (0.825,-0.70) {$\vdots$};

\coordinate (r1) at (6.35,4.65);
\coordinate (r2) at (6.20,3.35);
\coordinate (r3) at (6.20,2.05);

\draw[->] (v1) to[bend right=8]
  node[edgelabel,pos=0.50] {$h_1$} (r1);

\draw[->] (v2) to[bend left=5]
  node[edgelabel,pos=0.50] {$h_2$} (r2);

\draw[->] (v3) to[bend left=8]
  node[edgelabel,pos=0.50] {$h_3$} (r3);

\node[right] at (r1) {$\cdots$};
\node[right] at (r2) {$\cdots$};
\node[right] at (r3) {$\cdots$};

\end{tikzpicture}
\end{center}
The loops $\gamma$, $\delta$, and $\varepsilon$, are exclusive cycles.
Denoting by $\{\beta_1,\beta_2\}^*$ the set of all words (the empty word included) in the alphabet $\{\beta_1,\beta_2\}$, we have
\begin{align*}
    F_E^+(v_1)=&\big\{v_1,\pi(\beta_1,\beta_2)b: \pi(\beta_1,\beta_2)\in\{\beta_1,\beta_2\}^*\big\}\\
    F_E^+(v_2)=&\big\{v_2, d_i: i\geq 1\big\}\\
    F_E^+(v_3)=&\big\{v_3, f_i:i\geq 1\big\}
\end{align*}
Therefore by \Cref{injhulltheoremc} the injective envelopes of the simple modules $V^E_\gamma$, $V^E_\delta$, and $V^E_\varepsilon$, are, respectively, 
\begin{align*}
    X^E_\gamma&=K[[F_E^+(v_1)[\mathbb N]]]^{sf}=K[[F_E^+(v_1)[\mathbb N]]]\\
    &=\big\{
    \sum_{i=1}^n k_i \alpha^E_{\gamma,i}+\sum_{\pi(\beta_1,\beta_2)\in\{\beta_1,\beta_2\}^*} k_{\pi(\beta_1,\beta_2),i}\pi(\beta_1,\beta_2)b\alpha^E_{\gamma,i}\mid k_i, k_{\pi(\beta_1,\beta_2),i}\in K,\\
    & \forall \pi(\beta_1,\beta_2)\in\{\beta_1,\beta_2\}^*: k_{\pi(\beta_1,\beta_2),i}=0  \text{ for almost every }i\in\mathbb N\big\},\\
 \end{align*}
 \begin{align*}
    X^E_\delta&=K[[F_E^+(v_2)[\mathbb N]]]^{sf} \\
    &=\big\{\sum_{i=1}^nk_i \alpha^E_{\delta,i}+\sum_{j=1}^m\sum_{i=1}^n k_{j,i}d_j\alpha^E_{\delta,i}\mid k_i, k_{j,i}\in K, m,n\in\mathbb N\big\} =U^E_\delta, \ \mbox{and} 
    \end{align*}
    \begin{align*}
    X^E_\varepsilon&=K[[F_E^+(v_3)[\mathbb N]]]^{sf}=K[[F_E^+(v_3)[\mathbb N]]]\\
    &=\big\{\sum_{i=1}^nk_i \alpha^E_{\varepsilon,i}+\sum_{i,j\geq 1}k_{i,j}f_j\alpha^E_{\varepsilon,i}\mid k_i, k_{i,j}\in K,\\
    &\forall j\geq 1: k_{i,j}=0 \text{ for almost every }i\in\mathbb N\big\}.
\end{align*}    

Assume now that $K=\mathbb Q$ and  fix the basic irreducible polynomial $p(x)=-x^3-x-1\in\mathbb Q[x,x^{-1}]$. Then by \Cref{generalinjhulltheoremc} the injective envelopes of the simple modules $V^E_{p(x),\gamma}$, $V^E_{p(x),\delta}$, and $V^E_{p(x),\varepsilon}$, are, respectively, 
\begin{align*}
    X^E_{p(x),\gamma}&=\mathbb Q[[F_E^+(v_1)[\mathbb N][\mathbb Z^+_{<3}]]]^{sf}=\mathbb Q[[F_E^+(v_1)[\mathbb N][\mathbb Z^+_{<3}]]]\\
    &=\big\{
    \sum_{i=1}^n \sum_{\ell=0}^2 k_{i,\ell} \gamma^\ell\alpha^E_{p(x),\gamma,i}+\sum_{\pi(\beta_1,\beta_2)\in\{\beta_1,\beta_2\}^*} \sum_{\ell=0}^2k_{\pi(\beta_1,\beta_2),i,\ell}\pi(\beta_1,\beta_2)b\gamma^\ell\alpha^E_{p(x),\gamma,i}\mid \\
    &k_{i,\ell}, k_{\pi(\beta_1,\beta_2),i,\ell}\in \mathbb Q,\text{ and }
     \forall \pi(\beta_1,\beta_2)\in\{\beta_1,\beta_2\}^*: \\
     &k_{\pi(\beta_1,\beta_2),i,\ell}=0  \text{ for almost every }i\in\mathbb N\big\},
 \end{align*}
 \begin{align*}
    X^E_{p(x),\delta}&=\mathbb Q[[F_E^+(v_2)[\mathbb N][\mathbb Z^+_{<3}]]]^{sf}\\
    &=\big\{\sum_{i=1}^n\sum_{\ell=0}^2k_{i,\ell} \delta^\ell\alpha^E_{p(x),\delta,i}+\sum_{j=1}^m\sum_{i=1}^n \sum_{\ell=0}^2 k_{j,i,\ell}d_j\delta^\ell\alpha^E_{p(x),\delta,i}\mid\\
    &k_{i,\ell}, k_{j,i,\ell}\in \mathbb Q, n\in\mathbb N\big\}, \ \mbox{and} \ 
    \end{align*}
    \begin{align*}
    X^E_{p(x),\varepsilon}&=\mathbb Q[[F_E^+(v_3)[\mathbb N][\mathbb Z^+_{<3}]]]^{sf}=\mathbb Q[[F_E^+(v_3)[\mathbb N][\mathbb Z^+_{<3}]]]\\
    &=\big\{\sum_{i=1}^n\sum_{\ell=0}^2k_{i,\ell} \varepsilon^\ell\alpha^E_{p(x),\varepsilon,i}+\sum_{i,j\geq 1}\sum_{\ell=0}^2k_{i,j,\ell}f_j\varepsilon^\ell\alpha^E_{p(x),\varepsilon,i}\mid k_{i,\ell}, k_{i,j, \ell}\in \mathbb Q,\\
    &\text{such that }\forall j\geq 1: k_{i,j, \ell}=0 \text{ for almost every }i\in\mathbb N\big\}.
\end{align*}  

\end{example}

\section{Appendix}\label{sec:appendix}

In the proof of Proposition \ref{prop:formalisTmod} we asserted that there is a left $L_K(E)$-premodule structure on $K[[F^+_E(c^0)[\mathbb N ]]]$ when $c$ is an exclusive cycle.   In this Appendix we formally define that premodule action. To do so, we need only to find a collection of $K$-linear endomorphisms of  $K[[F^+_E(c^0)[\mathbb N ]]]$ that form a Cuntz-Krieger $E$-family, and then invoke the Universal Property of $L_K(E)$ (see e.g., \cite[Remark 1.2.5]{AAM}).

 To that end, let $u\in E^0$, $e\in E^1$. We define three $K$-linear endomorphisms $P_u$, $S_e$, and $S_{e^*}$ of the $K$-vector space $K[[F^+_E(c^0)[\mathbb N ]]]$ of all functions $\mathfrak p:F^+_E(c^0)\times\mathbb N \to K$ that assume for each $\mu\in F^+_E(c^0)$ the value 0 for almost all $i\in\mathbb N $. For each  $(\mu,i)\in F^+_E(c^0)\times\mathbb N $ we define
\[
P_u(\mathfrak p)(\mu,i):=\begin{cases}
    \mathfrak p(\mu,i)&\text{if $s(\mu)=u$};\\
    0&\text{otherwise.}
\end{cases}
\]

\[
S_e(\mathfrak p)(\mu,i):=\begin{cases}\begin{cases}
    \mathfrak p(\mu',i)  & \text{if }\mu=e\mu', \\
     0 & \text{otherwise}
\end{cases} & \text{if }s(e)\not\in c^0;\\
\begin{cases}
    0&\text{if }r(e)\notin c^0\\
    \mathfrak p(r(e),i)  & \text{if } \mu=s(e), v\not=r(e)\in c^0, 
    \\
 \mathfrak p(r(e),i+1)+\mathfrak p(r(e),i)   & \text{if } \mu=s(e), r(e)=v\\
     0 & \text{otherwise}
\end{cases}
 & \text{if }s(e)\in c^0.
\end{cases}
\]

\[
S_{e^*}(\mathfrak p)(\mu,i):=\begin{cases}\begin{cases}
    \mathfrak p(e\mu,i)  & \text{if }s(\mu)=r(e), \\
     0 & \text{otherwise}
\end{cases} & \text{if }s(e)\not\in c^0;\\
\begin{cases}
    \mathfrak p(s(e),i)  & \text{if } \mu=r(e)\not=v \\
 \displaystyle\sum_{j\geq i}(-1)^{j-i}\mathfrak p(s(e),j)   & \text{if } \mu=r(e)=v\\
     0 & \text{otherwise}
\end{cases}
 & \text{if }s(e)\in c^0.
\end{cases}
\]
Observe that $\displaystyle \sum_{j\geq i}(-1)^{j-i}\mathfrak p(s(e),j)$ is the sum of finitely many nonzero scalars, since for each $\mu\in F^+_E(c^0)$ we have by definition that $\mathfrak p(\mu,j)=0$ for $j\gg 0$.

Clearly $P_u$, $S_e$, and $S_{e^*}$ are $K$-linear. If
\[\mathfrak p=\sum_{(\mu,i)\in F^+_E(c^0)\times\mathbb N }k_{\mu,i} \mu p^*_{v,r(\mu)}\alpha^E_{c,i}\]
then


\begin{align*}
P_u(\mathfrak p)&=
\sum_{\substack{(\mu,i)\in F^+_E(c^0)\times\mathbb N \\ s(\mu)=u}}
k_{\mu,i} \mu p^*_{v,r(\mu)}\alpha^E_{c,i}=\sum_{\substack{(\mu,i)\in F^+_E(c^0)\times\mathbb N}}
k_{\mu,i}u\mu p^*_{v,r(\mu)}\alpha^E_{c,i}\\
S_e(\mathfrak p)&=\begin{cases}
\displaystyle\sum_{\substack{
(\mu,i)\in F^+_E(c^0)\times\mathbb N
\\
\mu=e\mu'}
}k_{\mu',i} \mu p^*_{v,r(\mu)} \alpha^E_{c,i} =
   \sum_{
   (\mu,i)\in F^+_E(c^0)\times\mathbb N
}
k_{\mu,i} e\mu p^*_{v,r(\mu)}\alpha^E_{c,i}   & \text{if }s(e)\notin c^0,\\
0 &\text{if }s(e)\in c^0, r(e)\notin c^0\\
\displaystyle\sum_{i\geq 1}k_{r(e),i}p^*_{v,s(e)}\alpha^E_{c,i}= 
\displaystyle\sum_{
(\mu,i)\in F^+_E(c^0)\times\mathbb N
}
k_{\mu,i} e_j\mu p^*_{v,r(\mu)}\alpha^E_{c,i}
   & \text{if }e=e_j, 1\leq j<n\\
\displaystyle\sum_{i\geq 1}(k_{v,i+1}+k_{v,i})p^*_{v,s(e)}\alpha^E_{c,i}=
\sum_{(\mu,i)\in F^+_E(c^0)\times\mathbb N }k_{\mu,i} e_n\mu p^*_{v,r(\mu)}\alpha^E_{c,i}
& \text{if }e=e_n.
\end{cases}
\end{align*}
The latter equality, for $e=e_n$, follows by 
\begin{align*}
\sum_{i\geq 1}(k_{v,i+1}+k_{v,i})p^*_{v,s(e)}\alpha^E_{c,i}&=\sum_{i\geq 1}k_{v,i}p^*_{v,v_n}(\alpha^E_{c,i}+\alpha^E_{c,i-1})\\
&=
\sum_{i\geq 1}k_{v,i}p^*_{v,v_n}c\alpha^E_{c,i}=
\sum_{i\geq 1}k_{v,i}e_n\alpha^E_{c,i}\\
&=
\sum_{(\mu,i)\in F^+_E(c^0)\times\mathbb N }k_{\mu,i} e_n\mu p^*_{v,r(\mu)}\alpha^E_{c,i} \ .
\end{align*}

\begin{align*}
S_{e^*}(\mathfrak p)&=\begin{cases}
  \displaystyle\sum_{\substack{
(\mu,i)\in F^+_E(c^0)\times\mathbb N\\
s(\mu)=r(e)}
}k_{e\mu,i} \mu p^*_{v,r(\mu)} \alpha^E_{c,i} = \sum_{
(\mu,i)\in F^+_E(c^0)\times\mathbb N}
k_{\mu,i} e^*\mu p^*_{v,r(\mu)}\alpha^E_{c,i} & \text{if }s(e)\notin c^0, \\
0 & \text{if }s(e)\in c^0, r(e)\notin c^0
\\
\displaystyle\sum_{i\geq 1}k_{v_j,i}p^*_{v,v_{j+1}}\alpha^E_{c,i}= 
\displaystyle\sum_{
(\mu,i)\in F^+_E(c^0)\times\mathbb N}
k_{\mu,i} e_j^*\mu p^*_{v,r(\mu)}\alpha^E_{c,i}
   & \text{if }e=e_j, 1\leq j<n\\
\displaystyle\sum_{i\geq 1}
\sum_{j\geq i}(-1)^{j-i}k_{v,j}\alpha^E_{c,i}=
\sum_{(\mu,i)\in F^+_E(c^0)\times\mathbb N }k_{\mu,i} e^*_n\mu p^*_{v,r(\mu)}\alpha^E_{c,i}
& \text{if }e=e_n.
\end{cases}
\end{align*}
The latter equality, for $e=e_n$, follows by 
\begin{align*}
    \sum_{(\mu,i)\in F^+_E(c^0)\times\mathbb N }k_{\mu,i} e^*_n\mu p^*_{v,r(\mu)}\alpha^E_{c,i}&=\sum_{i\geq 1}k_{v,i}c^*\alpha^E_{c,i}\\
    &=\sum_{i\geq 1}k_{v,i}\sum_{j\leq i}(-1)^{i-j}\alpha^E_{c,j}\\
    &=\sum_{i\geq 1}\left(\sum_{j\geq i}(-1)^{j-i} k_{v,j}\right)\alpha^E_{c,i} \ .
\end{align*}
Hence, the effect corresponds to multiplying by $u$, $e$ and $e^*$ all the summands of the formal series.  
 
 \textbf{Claim}: The subset $\{P_u, S_e, S_{e^*}\mid u\in E^0, e\in E^1\}$ forms a Cuntz-Krieger $E$-family in $\End_K(K[[F^+_E(c^0)[\mathbb N ]]])$.
 
\textbf{Proof of the Claim}:  Formally there are five separate types of equations we must verify.   The verifications are cumbersome but straightforward. To gain a sense of the computations involved, we suggest the reader consult \cite[Appendix]{Paper1}, in which we present such computations in the context of building the injective envelope of a simple left $L_K(E)$-ideal.  

We content ourselves here by establishing just one of the many required types of equations.  Specifically, we demonstrate that, for each regular vertex $u\in E^0$,
\[
\sum_{e\in s^{-1}(u)}S_eS_{e^*}=P_u.
\]
If $u\notin c^0$, we have
\begin{align*}
   \sum_{e\in s^{-1}(u)}S_e(S_{e^*}(\mathfrak p))(\mu,i)&=S_{e^*}(\mathfrak p) (\mu',i)\quad \text{if \ $\exists \ e\in s^{-1}(u)$ with $\mu=e \mu'$}\\
   &=\mathfrak p (\mu,i)\quad \text{if $s(\mu)=u$}\\
   &=P_u (\mathfrak p) (\mu,i).
\end{align*}
If $u=v_j$, $1\leq j<n$, we have
\begin{align*}
   \sum_{e\in s^{-1}(u)}S_e(S_{e^*}(\mathfrak p)) (\mu,i)&=S_{e_j^*}(\mathfrak p) (v_{j+1},i)\\
   &=\mathfrak p (v_j,i)= P_{v_j} (\mathfrak p) (\mu,i).
\end{align*}
If $u=v_n$,
we have
\begin{align*}
   \sum_{e\in s^{-1}(u)}S_e(S_{e^*}(\mathfrak p)) (\mu,i)&=S_{e_n^*}(\mathfrak p) (v,i+1)+S_{e_n^*}(\mathfrak p) (v,i)\\
   &=\sum_{j\geq i+1}(-1)^{j-i-1}\mathfrak p(v_n,j)+\sum_{j\geq i}(-1)^{j-i}\mathfrak p(v_n,j)\\
   &=\mathfrak p(v_n,i)= P_{v_n} (\mathfrak p) (\mu,i) . 
\end{align*}

\end{document}